\documentclass[12pt, a4paper, reqno]{article}

\usepackage[margin=30mm]{geometry}
\usepackage{amsmath, amsthm, amsfonts, amssymb, mathrsfs, hyperref}

\newcommand{\R}{\mathbb R}

\newcommand{\be}{\begin{equation}}
\newcommand{\ee}{\end{equation}}
\newcommand{\deriv}[2]{\frac{d #1}{d #2}}
\newcommand{\pderiv}[2]{\frac{\partial #1}{\partial #2}}

\newtheorem{thm}{Theorem}[section]

\newtheorem{prop}[thm]{Proposition}

\newtheorem{rem}{Remark}

\newtheorem{exmp}[thm]{Example}

\numberwithin{equation}{section}

\usepackage{fancyhdr} 
\pagestyle{plain}

\lhead{}\chead{}\rhead{}
\lfoot{}\cfoot{\thepage}\rfoot{}

\usepackage{sectsty}
\allsectionsfont{\sffamily\mdseries\upshape}

\usepackage[nottoc,notlof,notlot]{tocbibind} 
\usepackage[titles]{tocloft} 


\newcommand{\userName}{Ifan Johnston\thanks{
I. Johnston is supported by EPSRC as part of the MASDOC DTC at the University of Warwick. Grant No. EP/HO23364/1.} \footnote{This article was published on 26/06/2019 in the journal Fractal and Fractional}  \quad Vassili Kolokoltsov}

\begin{document}
\title{Green's function estimates for time fractional evolution equations}
\author{\userName}
\maketitle

\begin{abstract}
We look at estimates for the Green's function of time-fractional evolution equations of the form $D^{\nu}_{0+*} u = Lu$, where $D^{\nu}_{0+*}$ is a Caputo-type time-fractional derivative, depending on a L\'evy kernel $\nu$ with variable coefficients, which is comparable to $y^{-1-\beta}$ for $\beta \in (0, 1)$, and $L$ is an operator acting on the spatial variable. First, we obtain global two-sided estimates for the Green's function of $D^{\beta}_0 u = Lu$ in the case that $L$ is a second order elliptic operator in divergence form. Secondly, we obtain global upper bounds for the Green's function of $D^{\beta}_0 u=\Psi(-i\nabla)u$ where $\Psi$ is a pseudo-differential operator with constant coefficients that is homogeneous of order $\alpha$. Thirdly, we obtain local two-sided estimates for the Green's function of $D^{\beta}_0 u = Lu$ where $L$ is a more general non-degenerate second order elliptic operator. Finally we look at the case of stable-like operator, extending the second result from a constant coefficient to variable coefficients. In each case, we also estimate the spatial derivatives of the Green's functions. To obtain these bounds we use a particular form of the Mittag-Leffler functions, which allow us to use directly known estimates for the Green's functions associated with $L$ and $\Psi$, as well as estimates for stable densities. These estimates then allow us to estimate the solutions to a wide class of problems of the form $D^{(\nu, t)}_0 u = Lu$, where $D^{(\nu, t)}$ is a Caputo-type operator with variable coefficients.\\
\bigskip
\noindent
\textbf{Keywords:} Caputo derivative; Green's function; Aronson estimates; two-sided estimates; fractional evolution\\
\textbf{MSC 2010:} Primary: 60J35. Secondary: 60G52; 35J08; 33E12; 44A10; 60J75.
\end{abstract}
\newpage
\tableofcontents
\section{Introduction}
The area of fractional evolution equations has become hugely popular in recent decades, due to its ability to better model real-world phenomena compared to their non-fractional counterpart which usually model local behaviour. The nature of fractional in-time operators (respectively in space) allow us to model, for example, processes that exhibit some kind of memory (resp. non-local interactions). The processes associated with time-fractional evolution models (in the case of a local spatial operator, they are time-changed diffusion processes), possess some remarkable properties. For some motivation, let us focus for the moment on the particular case of
\[D^\beta_* u = \Delta u,\]
where $D^\beta_*$ is the Captuo fractional derivative in time, $\beta \in (0, 1)$ and $\Delta$ is the Laplacian operator (a second order uniformly elliptic operator). This time-fractional diffusion equation is widely used to model anomalous diffusions which exhibit subdiffusive behaviour, which is due to the diffusive particles being trapped. 
Such fractional time diffusion equations also arise as a scaling limit of random conductance models (random walks in random environments). This point of view is particularly interesting, since the limiting process is a non-Markovian process which arises as the scaling limit of Markovian process, see \cite{barlow2011convergence,meerschaert2012stochastic,leonenko2013correlation} and references therein for a probabilistic account of models related to fractional calculus. Recently in \cite{hairer2018fractional}, the authors discussed how a fractional kinetic process (with $\beta = \frac{1}2$) emerges as the intermediate time behaviour of perturbed cellular flows. For an extensive account of physical applications see \cite{richard2014fractional}, \cite{mainardi2010fractional} or \cite{tarasov2011fractional}. For early applications of continuous time random walks and fractional calulus arising in finance, see the series of articles \cite{scalas2000fractional, mainardi2000fractional, gorenflo2001fractional}.

Recently much attention has been given to the Green's function of fractional differential equations. In \cite{chen2018heat}, the authors obtain two-sided estimates for the Green's functions of fractional evolution equations, under the assumption that the Green's function of the spatial operator satisfies global (in time) two-sided estimates.

In \cite{grigor2008dichotomy}, the authors explore the general structure of two-sided estimates for the transition probabilities associated with local or non-local Dirichlet forms. They show that the bounds for transition probabilities associated with local
 Dirichlet forms will always be of exponential type, and for those associated with non-local Dirichlet forms the bounds will be of polynomial type. Even more recently in~\cite{deng2018exact} they give some exact asymptotic formulas for the Green's function of fractional evolution equations.
The authors in \cite{kelbert2016weak} study error estimates for continuous time random walk (CTRW) approximation of classical fractional evolution equations, for which the heat kernel estimates for $D^\beta u = \Delta u$ and $D^\beta u = \psi(-i\nabla)u$, where $\psi(-i\nabla)$ generates a symmetric stable process, are obtained as a by-product. 

In \cite{eidelman2004cauchy} the authors use the parametrix method (or Levi method) to study the equation $D^\beta u(t,x) - Bu(t, x)=f(t,x)$, where the operator $B$ is a uniformly elliptic second order differential operator (which we look at in Theorem \ref{thm:fracdiffusionlocal}) with bounded continous real-valued coefficients. This is done by looking first at the constant coefficient case then using these estimates to study the variable coefficient case. In the articles \cite{kochubei2019random, kochubei2018random}, the authors study the long-time behaviour of the Cesaro mean of the heat kernel of subordinated processes and for this they use a version of a Karamata-Tauberian theorem. The long-time behaviour of solutions to space-time fractional diffusion equations are also considered in \cite{cheng2017asymptotic}. 
In the articles \cite{kemppainen2017representation, cheng2017asymptotic, kim2016asymptotic}, the authors consider the space-time fractional diffusion equation, which involves a Caputo fractional derivative in time and a fractional Laplacian as the spatial operator. This falls under the case of our Theorem \ref{thm:stableglobal}, where we consider as the spatial operator a (non-isotropic) pseudo-differential operator with symbol
\[\psi_\alpha(\xi) = -|\xi|^\alpha w(\xi/|\xi|), \quad \xi\in \R^d,~\alpha \in (0, 2),~w \in C^k(\mathbb S^{d-1}),\]
see (\ref{eq:defsymbolgeneral}). Since this case involves constant coefficients, we could have used the machinery of the Fox H function like they do in \cite{kemppainen2017representation, kim2016asymptotic} (among many others that use them) or Laplace transform arguments. However these approaches would not work for our other results, since the operators involved have variable coefficients. 

Diffusion processes in random environments are also closely related objects, and in fact there are many works looking at estimates for the heat kernels of such processes; for example in \cite{cabezas2015sub}, the authors obtain sub-gaussian bounds for the transition kernel of a random walk in a random environment.

This article is structured as follows: in Section 2 we begin by recalling definitions that will be used throughout the article. The topic of generalised fractional calculus is briefly covered, which is motived by a probabilistic generalisation of Caputo fractional derivatives. We also recall some important estimates, namely the Aronson estimates for the fundamental solutions of second order parabolic equations and asymptotic estimates for stable densities. In Section 3 (resp. Section 4) we obtain global estimates (resp. local estimates) for the Green's function of fractional evolution equations of the form
\be\label{eq:generalfracevol}
\left\{
\begin{array}{rll}
D^\beta_* u(t, x) = &L u(t,x),& \text{in } (0, \infty)\times \R^d\\
u(t,x) =& Y(x),& \text{in } \{0\}\times \R^d,
\end{array}
\right.
\ee
where $L$ is some differential operator acting on the spatial variable $x$.
In the final section we discuss solutions to generalised evolution equations of the form
\[-D^{(\nu, t)} u(t, x) = Lu(t, x),\]
where $D^{(\nu,t)}$, acting on the time variable, is the generator of an increasing process, which is comparable to a $\beta$-stable subordinator, and $L$ is a generator of a strongly continuous contraction semigroup. We then conclude by summarising the main results and commenting on some applications.

We summarise briefly some of the results obtained for various spatial operators.
\begin{itemize}
\item Theorem \ref{thm:globaldivergenceestimate}: When the spatial operator is given by a second order uniformly elliptic operator in \emph{divergence form},
\[Lu = \nabla\cdot(A(x) \nabla u),\]
we obtain the following two-sided estimates for the Green's function $G^{(\beta)}(t,x,y)$ of (\ref{eq:generalfracevol}). For $d \geq 3$ and $\beta\in (0, 1)$ we find:
\be
\label{intro:globalfracdiff}
G^{(\beta)}(t,x,y) \asymp C\min\left(t^{-\frac{d\beta}2}\Omega^{1-\frac d2}, t^{-\frac{d\beta}2}\Omega^{-\frac d2 \left(\frac{1-\beta}{2-\beta}\right)}\exp\{-C_\beta \Omega^{\frac{1}{2-\beta}}\}\right),
\ee
for $(t, x, y) \in (0, \infty) \times \R^d\times \R^d$ where $\Omega := |x-y|^2 t^{-\beta}$.

\item Theorem \ref{thm:stableglobal}: When the spatial operator is a non-isotropic pseudo-differential operator  $\Psi_\alpha$, with homogeneous symbol of the form (cf. Equation (\ref{eqdef:wspectralmeasure}))
\[\psi_\alpha(\xi) = |\xi|^\alpha w_\mu(\xi/|\xi|), \quad\xi \in \R^d,~\alpha\in(0,2),~w_\mu\in C^k (\mathbb S^{d-1}),\]
we obtain two-sided estimates for the Green's function $G^{(\beta)}_{\psi_\alpha}$ of (\ref{eq:generalfracevol}). For $d > \alpha > 0$ and $\beta\in(0, 1)$:
\be
\label{intro:globalfracstable}
G^{(\beta)}_{\psi_\alpha}(t,x-y) \asymp C \min\left(t^{-\frac{d\beta}\alpha} \Omega^{1-\frac d\alpha}, t^{-\frac{d\beta}\alpha} \Omega^{-1-\frac d\alpha}\right)
\ee
for $(t,x-y) \in (0, \infty) \times\R^d$, where $\Omega := |x-y|^\alpha t^{-\beta}$.
\end{itemize}
It is well known that the Green's functions of the above operators $L$ and $\Psi_\alpha$ satisfy two-sided estimates for all $(t, x, y)\in (0, \infty)\times\R^d\times \R^d$, see \cite{aronson1967bounds} or \cite{Stroock1988} for the first case, \cite{kolokoltsov2019differential} or \cite{eidelman2004analytic} for the second. Note that in the case of a lower bound for the Green's function of $\Psi_\alpha$, one also needs to assume that its spectral density $\mu$ is strictly positive, and restrict $\alpha \in (0, 2)$. 

In Section \ref{sec:localestimates} we consider more general spatial operators, whose Green's functions satisfy only local (in time) estimates. We obtain local estimates for the Green's function of (\ref{eq:generalfracevol}) in the following cases:
\begin{itemize}
\item Theorem \ref{thm:fracdiffusionlocal}: A general second order elliptic operator $L$ with variable coefficients of the form
\[
Lu = \sum_{i, j = 1}^d a_{ij}(x)\partial_{x_i}\partial_{x_j}u + \sum_{i=1}^d b_i(x)\partial_{x_j}u + c(x) u.
\]
Unsurprisingly we find that the estimates are the same as (\ref{intro:globalfracdiff}) but only for $(t, x, y) \in (0, T)\times \R^d\times \R^d$ for some fixed $T > 0$.
\item Theorem \ref{thm:fracevollocal}: A non-isotropic pseudo-differential operator $\psi_{\alpha, x}$ with variable coefficients, with homogeneous symbol of the form
\[\psi_{\alpha, x}(x, \xi) = |\xi|^\alpha w_\mu(x, \xi/|\xi|),\]
where $w_\mu$ is for each fixed $x$ a continuous function on the surface of the sphere $\mathbb S^{d-1}$. Note again that the spectral density associated with $\psi_\alpha$ (for fixed $x$) must be strictly positive, in order to use two-sided estimates for the Green's function $G_{\psi_\alpha, x}$.
Again we obtain the same estimates as (\ref{intro:globalfracstable}) but for $(t, x, y) \in (0, T)\times \R^d \times \R^d$ for some fixed $T> 0$.
\end{itemize}
For each of the four cases above, we also obtain estimates for the spatial derivatives of the Green's functions of the fractional evolution equations.

Finally, in the last section we turn our attention to generalised fractional evolutions. This is when one replaces the standard fractional time derivative with a weighted mixture of fractional derivatives,
 \[D^{(\nu)}_{0+*} f(t) = -\int_0^t(f(t-s) - f(t))\nu(t, \mathrm ds) - \int_t^\infty (f(0) - f(t))\nu(t, \mathrm ds).\]
The main result is that the solution to such generalised fractional evolutions can be estimated (by means of our Green's function estimates) by the solutions to classical fractional evolution equations.

\section{Preliminaries}
\vspace{-6pt}
\subsection{Estimates and Stable Processes}

Throughout the article, we will use the notation $f(x) \asymp g(x)$ in $D$, which means that there exists constants $C, c > 0$ such that $f$ satisfies the following two-sided estimate, 
\[cg(x) \leq f(x) \leq Cg(x), \quad \forall x \in D,\]
for some region $D$. The notation $f(x) \sim g(x)$ for $x\rightarrow \infty$ means that 
\[\frac{f(x)}{g(x)} \rightarrow 1, \quad \text{ as } x\rightarrow \infty.\]
Then for each $M > 0$ there exists a constant $C > 0$ such that
\[cg(x) \leq f(x) \leq Cg(x), \quad x \in (M, \infty).\]

Similarly, the notation $f(x) \sim g(x)$ for $x\rightarrow 0$ means 
\[\frac{f(x)}{g(x)} \rightarrow 1,\quad \text{ as } x\rightarrow 0.\]
Then for each $m > 0$ there exists a $c > 0$ such that
\[cg(x) \leq f(x) \leq c g(x), \quad x \in (0, m).\]

If both $f$ and $g$ on $\R_+$ are positive, bounded and satisfy $f(x) \sim g(x)$ for $x \rightarrow \infty$ (resp. $x\rightarrow 0$), then $f(x) \asymp g(x)$ in $(M, \infty)$ for any $M > 0$ (resp. in $(0, m)$ for any $m  <\infty$). See the excellent De Bruijn book \cite{de1970asymptotic} for more details on asymptotic analysis.

The fundamental solutions $Z(t, x; \tau, \xi)$ of the Cauchy problem for the uniformly parabolic~equations
\[\partial_t u - \{a_{ij}(t, x) \partial_{x_i}\partial_{x_j} u + b_i(t, x) \partial_{x_i}u + c(t, x) u\} = 0, \quad u(0, x) = \delta(x-\xi)\]
with bounded and uniformly H\"older continuous coefficients in $x$ defined on $(0, T] \times \R^d$ are known,~\cite{porper1984two}, to satisfy the two-sided estimate
\[Z(t, x;\tau, \xi) \asymp (t- \tau)^{-d/2} \exp\left\{-c\frac{(x-\xi)^2}{t-\tau}\right\}.\] 

We will assume that the coefficients do not depend on time, so that the fundamental solution is just a function of $t, x$ and $y$.
On the other hand, Aronson \cite{aronson1967bounds} obtained global two-sided estimates for the fundamental solution  $G(t, x, y)$ of the divergence equation
\be\label{eq:paradivform}
\partial_t u = \partial_{x_i}(a_{ij}(x) \partial_{x_j} u).
\ee

Assume that the coefficients satisfy the uniform ellipticity condition: there exists $\mu \geq 1$ such that
\be\label{eq:ellipticity}\mu^{-1} |\xi|^2 \leq a_{ij}(x) \xi_i\xi_j \leq \mu |\xi|^2, \quad \text{ for all } \xi\in \R^d.\ee

Further assuming that the coefficients in (\ref{eq:ellipticity}) are continuous, then there exists constants $C_1, C_2, c_1$ and $c_2$ such that for $(t, x, y) \in (0, \infty)\times \R^d\times \R^d$,
\begin{equation}\label{eq:nash-aronson}
c_1 t^{-d/2}\exp\left\{-c_2\frac{|x-y|^2}{t}\right\} \leq G(t, x, y) \leq C_1 t^{-d/2}\exp\left\{-C_2\frac{|x-y|^2}{t}\right\}.
\end{equation}

For a discussion on divergence and non-divergence equations, see for example \cite{evans2010partial}.

Note that throughout $C$, $\tilde{C}$, $c$ or $\tilde{c}$ denotes some constants. If we wish to stress what they depend on, say $\alpha, \beta$ or $T$, we write $C_{\alpha, \beta, T}$ for example.

Let us recall some basic facts about stable densities; our standard references for these are \cite{zolotarev1986one,kolokoltsov2011markov}. The characteristic function of the general (up to a shift) one-dimensional stable law with index of stability $\beta\in (0, 2)$ (but $\beta \neq 1$) is given by
\[\phi_\beta(y) =\exp\{-\sigma |y|^\beta e^{i\frac{\pi}{2}\gamma \text{ sgn }\gamma}\}, \quad y \in \R,\]
where the parameter $\gamma\in [-1, 1]$ measures the skewness of the distribution and $\sigma > 0$ is the scale parameter. The probability density corresponding to the characteristic function $\phi_\beta$, which we denote by $w_\beta(x;\gamma, \sigma)$, is given by the following Fourier transform:
\[w_\beta(x;\gamma, \sigma) = \frac{1}{2\pi}\int_{\R} \exp\{-ixy - \sigma | y|^\beta e^{i\frac{\pi}{2} \gamma \text{ sgn }\gamma}\}~\mathrm dy.\]

We will be using totally positively skewed ($\gamma = 1$) normalised ($\sigma = 1$) stable densities, which we denote by $w_\beta(x)$ and they are given by
\[w_\beta(x) = \frac{1}{\pi}\Re \int_0^\infty \exp \{-ixy - |y|^\beta\}~\mathrm dy,\]
where $\Re(z)$ is the real part of $z\in \mathbb C$.
We will be using the asymptotic behaviour (as $x\rightarrow 0$ and $x \rightarrow \infty$) of the stable densities $w_\beta(x)$, so we state them now, \cite{uchaikin2011chance} (Theorem 5.4.1).
\begin{prop}
The stable densities $w_\beta(x)$ have the following asymptotic behaviour
\begin{equation}\label{prop:asymptoticsstable}
w_\beta(x) \sim \tilde{c}_\beta\left\{
\begin{array}{lc}
x^{-1-\beta},& \text{ as } x\rightarrow \infty,\\
f_{\beta}(x) := x^{-\frac{2- \beta}{2(1-\beta)}}\exp\left\{-c_\beta x^{-\frac{\beta}{1-\beta}}\right\}, & \text{ as } x \rightarrow 0,
\end{array}
\right.
\end{equation}
where $c_\beta = (1-\beta)\beta^{\frac{\beta}{1-\beta}}$.
\end{prop}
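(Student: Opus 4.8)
This is classical---it is essentially \cite{uchaikin2011chance}, Theorem~5.4.1---so let me just indicate the analytic mechanism I would use. The convenient starting point is the Laplace characterisation: for $\beta\in(0,1)$ the (one-sided, maximally skewed) density $w_\beta$ has $\int_0^\infty e^{-\lambda x}w_\beta(x)\,\mathrm dx=e^{-\lambda^\beta}$, so by Bromwich inversion
\[
w_\beta(x)=\frac{1}{2\pi i}\int_{\gamma-i\infty}^{\gamma+i\infty} e^{\lambda x-\lambda^\beta}\,\mathrm d\lambda,\qquad\gamma>0,
\]
with $\lambda^\beta$ the principal branch cut along $(-\infty,0]$. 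Both asymptotics are then read off from this single integral by deforming the contour in two different ways.

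For $x\to\infty$ the plan is to collapse the contour onto a Hankel loop around the cut $(-\infty,0]$. The small circle at the origin contributes nothing because $\beta<1$, and the two edges $\lambda=re^{\pm i\pi}$ give $w_\beta(x)=\frac{1}{\pi}\int_0^\infty e^{-rx}\operatorname{Im}\!\big(e^{-r^\beta e^{-i\pi\beta}}\big)\,\mathrm dr$. Expanding $\operatorname{Im}e^{-r^\beta e^{-i\pi\beta}}=\sum_{k\ge1}\frac{(-1)^{k+1}}{k!}\sin(k\pi\beta)\,r^{k\beta}$ and integrating term by term against $\int_0^\infty e^{-rx}r^{k\beta}\,\mathrm dr=\Gamma(k\beta+1)x^{-k\beta-1}$ yields the convergent (hence in particular asymptotic) series $w_\beta(x)=\frac1\pi\sum_{k\ge1}\frac{(-1)^{k+1}}{k!}\Gamma(k\beta+1)\sin(k\pi\beta)\,x^{-k\beta-1}$, whose leading term is $w_\beta(x)\sim\tilde c_\beta x^{-1-\beta}$ with $\tilde c_\beta=\pi^{-1}\Gamma(\beta+1)\sin\pi\beta=\beta/\Gamma(1-\beta)$.

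For $x\to0$ that series is useless (alternating terms of exploding size), so instead I would run the saddle-point method on the Bromwich integral with phase $\phi(\lambda)=\lambda x-\lambda^\beta$. Solving $\phi'(\lambda)=x-\beta\lambda^{\beta-1}=0$ gives the unique positive saddle $\lambda_0=(\beta/x)^{1/(1-\beta)}$, which runs off to $+\infty$ as $x\to0$, so the vertical steepest-descent path through $\lambda_0$ (vertical because $\phi''(\lambda_0)$ is real and positive) is a legitimate deformation of the Bromwich line. A short computation gives $\phi(\lambda_0)=-(1-\beta)\beta^{\beta/(1-\beta)}x^{-\beta/(1-\beta)}=-c_\beta x^{-\beta/(1-\beta)}$ and $\phi''(\lambda_0)=\beta(1-\beta)\lambda_0^{\beta-2}\asymp x^{(2-\beta)/(1-\beta)}$, so the transverse Gaussian integral contributes a factor of order $(2\pi\phi''(\lambda_0))^{-1/2}\asymp x^{-\frac{2-\beta}{2(1-\beta)}}$; collecting the exponential and the prefactor gives $w_\beta(x)\sim\tilde c_\beta\,x^{-\frac{2-\beta}{2(1-\beta)}}\exp\{-c_\beta x^{-\beta/(1-\beta)}\}=\tilde c_\beta f_\beta(x)$, in particular reproducing $c_\beta=(1-\beta)\beta^{\beta/(1-\beta)}$.

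The one genuinely delicate step---and the one I would be most careful with---is making the $x\to0$ steepest-descent estimate rigorous: exhibiting an admissible contour through $\lambda_0$ on which $\operatorname{Re}\phi$ has a strict maximum at $\lambda_0$, checking that no other critical point competes, and controlling the off-saddle tail so that the error is a genuine $o(1)$ relative correction uniformly as $x\to0$. Once that contour is fixed, the evaluation of $\phi$ and $\phi''$ at $\lambda_0$---hence the exact exponent $c_\beta$ and the power $\tfrac{2-\beta}{2(1-\beta)}$---is the routine algebra above; the $x\to\infty$ half has no such obstacle, only the standard dominated-convergence bookkeeping needed to justify the termwise integration and extract the first term.
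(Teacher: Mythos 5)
The paper does not actually prove this proposition: it is stated as a quotation of Theorem 5.4.1 of \cite{uchaikin2011chance}, so there is no in-text argument to compare yours against. Your sketch is the standard derivation that underlies the cited result, and the computations all check out: the Hankel-loop expansion gives the convergent series $w_\beta(x)=\pi^{-1}\sum_{k\ge1}\frac{(-1)^{k+1}}{k!}\Gamma(k\beta+1)\sin(k\pi\beta)x^{-k\beta-1}$ with leading constant $\beta/\Gamma(1-\beta)$, and the saddle $\lambda_0=(\beta/x)^{1/(1-\beta)}$ yields exactly $\phi(\lambda_0)=-(1-\beta)\beta^{\beta/(1-\beta)}x^{-\beta/(1-\beta)}$ and $\phi''(\lambda_0)\asymp x^{(2-\beta)/(1-\beta)}$, reproducing both $c_\beta$ and the prefactor exponent $\tfrac{2-\beta}{2(1-\beta)}$. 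The only incomplete step is the one you flag yourself: the rigorous justification of the steepest-descent contour and the uniform control of the off-saddle tail as $x\to0$ (together with the routine justification that the arcs at infinity vanish when collapsing the Bromwich line, which uses $\beta<1$). Since the paper delegates the whole statement to the literature, your proposal is, if anything, more informative than the paper; as written it is a correct outline rather than a complete proof, which is an acceptable level of detail for a classical fact of this kind.
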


A key point to note from the above asymptotic behaviour, is that since $w_\beta$ is bounded and strictly positive for $x > 0$, this gives us a two-sided estimate for $w_\beta(x)$ for all $x\in(0, \infty)$. More precisely for $\beta \in (0, 1/2)$ we have
\be\label{eq:stabledensityineq}w_\beta (x) \asymp c_\beta \min\left(x^{-1-\beta} ,x^{-\frac{2- \beta}{2(1-\beta)}}\exp\left\{-c_\beta x^{-\frac{\beta}{1-\beta}}\right\}\right),\ee
and for $\beta\in[1/2,1)$
\[
w_\beta(x) \asymp \tilde{c}_\beta\left\{
\begin{array}{lc}
x^{-1-\beta},& \text{ for } x \in (1, \infty),\\
x^{-\frac{2- \beta}{2(1-\beta)}}\exp\left\{-c_\beta x^{-\frac{\beta}{1-\beta}}\right\}, & \text{ for } x \in (0,1).
\end{array}
\right.
\]

For $\alpha \in (0, 2)$, the general symmetric stable density in $\R^d$ (up to a shift) has the form
\be\label{eqdef:spectralmeasure}
\phi_\alpha(p) = \exp\left\{-|p|^\alpha \int_{\mathbb S^{d-1}}|(p/|p|, s)|^\alpha \mu(\mathrm ds)\right\},
\ee
where the (finite) measure $\mu$ on $\mathbb S^{d-1}$ is called the spectral measure, \cite{kolokoltsov2000symmetric}. Let $w$ be a function on $\mathbb S^{d-1}$ given by
\be\label{eqdef:wspectralmeasure}
w_\mu(p) = \int_{\mathbb S^{d-1}}|(p, s)|^\alpha \mu(\mathrm ds),
\ee
so that
\be\label{eq:defsymbolgeneral}
\psi_\alpha(p) := \log \phi_\alpha(p) = -|p|^\alpha w_\mu(p/|p|), \quad p\in \R^d.
\ee

Note that $\psi_\alpha$ is the symbol of a pseudo-differntial operator $\Psi_\alpha(-i\nabla)$ which we will study later. When $\mu$ is the uniform measure on $\mathbb S^{d-1}$ we have $w(p) \equiv 1$, and the operator $\Psi_\alpha(-i\nabla)$ is just the fractional Laplacian $(-\Delta)^\alpha$ with symbol $\psi_\alpha(\xi) = -|\xi|^\alpha$.

\subsection{Fractional Derivatives and Their Extensions}
Fractional derivatives at $a\in \R$ of order $\beta \in (0,1)$ can be viewed probabilistically as the generators of a $\beta$-stable process, interrupted on an attempt to cross a boundary point $a$. This interpretation was extensively explored and extended in \cite{kolokoltsov2015fully}, and we recall some of the details here. For a broader background in classical fractional calculus, see any text on fractional calculus \cite{samko1993fractional, diethelm2010analysis, kilbas2006theory, podlubny1998fractional}, for example. 

The fractional Riemann-Liouville (RL) integral of order $\beta\in (0, 1)$ is given by
\[I^\beta_a f(x) = \frac{1}{\Gamma(\beta)}\int_a^x(x-t)^{\beta-1}f(t)\mathrm dt,\quad  x > a,\]
and the Caputo fractional derivative of order $\beta \in (0, 1)$ is given by
\[D^\beta_{a+*}f(x) = \frac 1{\Gamma(-\beta)}\int_0^{x-a} \frac{f(x-z) - f(x)}{z^{1+\beta}}\mathrm dz + \frac{f(x) - f(a)}{\Gamma(1-\beta)(x-a)^\beta}.\]

The fractional derivative in \emph{generator from}, is written as
\[D^\beta_{+} f(x) = \frac 1{\Gamma(-\beta)}\int_0^\infty \frac{f(x-z) - f(x)}{z^{1+\beta}}\mathrm dz,\]
which equals the Caputo derivative for $a=-\infty$. 
A possible extension for these fractional derivative operators which is widely used in the literature, are various mixtures of these derivatives, for example
\[\sum_{i=1}^N a_i \deriv{^{\beta_i}f}{x^{\beta_i}}\quad \text{ or } \quad \int_0^1 \deriv{^\beta f}{x^\beta}\mu(\mathrm d\beta).\]

In this article we will look at weighted mixed fractional derivatives, given by
\[D^{(\nu)}_+ f(t) = -\int_0^\infty (f(t-s) - f(t))\nu(t,\mathrm ds),\]
with some positive kernel $\nu(t, \cdot)$ on $\{s: t > 0\}$ satisfying the one-sided L\'evy-condition
\[\sup_t\int_0^\infty \min(1, s) \nu(t,\mathrm ds) < \infty.\]

The $-$ sign in the definition of $D^{(\nu)}_+$ is to comply with the standard notation for fractional derivatives, so that $D^{\beta}_+ = D^{(\nu)}_+$ with $\nu(t,y) = -1/\left[\Gamma(-\beta) y^{1+\beta}\right]$.

Notice that the Caputo derivative $D^{\beta}_{a+*}$ is obtained from $D^\beta_+$ by the restriction of its action on the space $C^1([a, \infty))$ considered as the subpsace of $C(\R)$ by extending their values as constants to the left of $a$. Then looking for a generalised Captuo derivative arising from $D^{(\nu)}_+$ we define
\[D^{(\nu)}_{a+*}f(t) = -\int_0^{t-a}(f(t-s) - f(s))\nu(t, \mathrm ds) - \int_{t-a}^\infty (f(a) - f(s)) \nu(t, \mathrm ds).\]

These extensions were initially suggested by the second author in \cite{kolokoltsov2015fully}, on the basis of the following probabilistic interpretation. It is well known that the fractional derivatives $-d^\beta/dx^\beta$ for $\beta\in(0,1)$  generate of stable L\'evy subordinators with the inverted direction (i.e., decreasing processes). Then the Caputo derivatives $D^\beta_{a+*}$ describe the modifications of the stable subordinators obtained by forbidding them to cross the boundary $x=a$ for some $a\in \R$. Applying this modification to the generalised operators $D^{(\nu)}_+$ one obtains $D^{(\nu)}_{a+*}$ which are then the generators of a Markov process interrupted on an attempt to cross the boundary point $a$.

The corresponding generalised fractional integrals arising from the generalised fractional derivative $D^{(\nu)}_+$ can be defined in a few different ways depending on which point of view one chooses: probability, semigroup theory or generalised functions/$\Psi$DE theory. The objects end up being the same, but we will stick to the semigroup theory prespective in this article, see the upcoming review \cite{kolokoltsov2019mixed} for a full account. %
In terms of operator semigroups, the operator $I^\beta_{-\infty}$ is the potential operator of the semigroup generated by $-D^{\beta}_+$. In other words, it is the limit of the resolvent operator
\[R_\lambda = (\lambda - D^\beta_+)^{-1},\]
as $\lambda \rightarrow 0$. Then since $I^\beta_a$ is the reduction of $I^\beta_{-\infty}$  to the space $C_{kill(a)}([a, \infty))$ of functions vanishing to the left of $a$ for any $a\in \R$, we define the \emph{generalised fractional integral} $I^\nu_a$ as the potential operator of the semigroup generated by $-D^{(\nu)}_+$  reduced to the space $C_{kill(a)}([a, \infty))$. For a background on potential operators and measures, see \cite{schilling2012bernstein} or \cite{van2012potential} (or from a probabilistic point of view, \cite{feller2008introduction}).

Let us denote by $(T^{\nu}_t)_{t\geq 0}$ the semigroup generated by the operator $-D^{(\nu)}_+$. Then for $f\in C_{kill(a)}([a, \infty))\cap C_\infty(\R)$, the potential operator $U^{(\nu)}$ of the semigroup $T^{\nu}_t$ is given by
\begin{align*}
U^{(\nu)} f(t) &= \int_0^\infty P^{\nu}_r f(t)\mathrm dr\\
&= \int_0^{\infty} \int_0^\infty f(t- s) G_{(\nu)}(r,t, \mathrm ds) \mathrm dr\\
&= \int_0^{t-a} f(t-s) \left(\int_0^\infty G_{(\nu)}(r,t,\mathrm ds)\right)\mathrm dr
\end{align*}
where $G_{(\nu)}(r,t, \mathrm ds)$ are the transition probabilities of the process generated by $D^{(\nu)}_+$. 
The potential measure is defined as the integral kernel of the potential operator, and by an abuse of notation, we denote this measure by $U^{(\nu)}(t, \mathrm ds)$. Thus the generalised fractional integral $I^{(\nu)}_a$ is given by
\[I^{(\nu)}_a f(t) = \int_0^{t-a} f(t-s)U^{(\nu)}(t,\mathrm ds)\]
where the potential measure $U^{(\nu)}(t, \mathrm ds)$ is equal to the vague limit
\[U^{(\nu)}(t,M) = \int_0^\infty G_{(\nu)}(r, t, M) \mathrm dr,\]
of the measures $\int_0^K G_{(\nu)}(r,t, \cdot)\mathrm dr, ~K \rightarrow \infty$ (see \cite{schilling2012bernstein} (p. 63)). Furthermore the $\lambda$-potential measure is defined by
\[U^{(\nu)}_{\lambda}(t, M) = \int_0^\infty e^{-\lambda r}G_{(\nu)}(r,t, M)\mathrm dr,\]
so that if $\lambda > 0$ and $g\in C_{kill(a)}([a, \infty))\cap C_\infty(\R)$, the convolution $(U^{(\nu)}_\lambda \star g)(t)$, which is given by
\be\label{eq:resolvingoperator}
(U^{(\nu)}_\lambda \star g)(t) = \int_0^{t-a} g(t-s) \int_0^\infty e^{-\lambda r} G_{(\nu)}(r,t, \mathrm ds)\mathrm dr,
\ee
is the resolving operator of the semigroup generated by $-D^{(\nu)}_+$. That is, $f(x) = (U^{(\nu)}_\lambda \star g)(x)$ is the classic solution to the equation
\[D^{(\nu)}_+ f = D^{(\nu)}_{a+*}f = -\lambda f + g.\]

This also holds for $\lambda = 0$, and so the potential operator with kernel $U^{(\nu)}(t, \mathrm dy)$,
\[(U^{(\nu)}\star g)(x) = I^{(\nu)}_a g(x),\]
represents the classical solution to the equation
\[D^{(\nu)}_{a+*}f = g,\]
on $C_{kill(a)}([a, b])$.
\begin{exmp}
For the case $\nu(t, \mathrm dy) = -1/[\Gamma(-\beta)y^{1+\beta}]\mathrm dy$, (\ref{eq:resolvingoperator}) says that
\[f(t) = \int_0^{t-a} g(t-s)\int_0^\infty e^{-\lambda r} G_{\beta}(r, s)\mathrm ds \mathrm dr,\]
where $G_\beta(r, s)$ are the transition densities of a $\beta$-stable subordinator, is the solution to the linear fractional equation
\[D^{\beta}_+ f(x)= -\lambda f(x)+ g(x), \quad f(a) = f_a.\]

On the other hand, it is well known that the solution to such linear fractional equations are given by
\begin{align*}
f(x) &= E_\beta(-\lambda(x-a)^\beta)f_a + \beta\int_a^x g(z)(x-z)^{\beta-1} E^{'}_\beta(-\lambda(x-z)^\beta)\mathrm dz\\
& = E_\beta(-\lambda(x-a)^\beta)f_a + \beta\int_0^{x-a} g(x-y)y^{\beta-1} E^{'}_\beta(-\lambda y^\beta)\mathrm dy,
\end{align*}
where $E_\beta(z)$ is the Mittag-Leffler function
\be\label{eqdef:MLSeries}E_\beta(z) = \sum_{k=0}^\infty \frac{z^k}{\Gamma(k\beta + 1)}, \quad z \in \mathbb C.\ee

Thus we have
\[U^\beta_\lambda (t) = \int_0^\infty e^{-\lambda r} G_\beta(r, t)\mathrm dr = \beta t^{\beta - 1} E^{'}_\beta(-\lambda t^\beta),\]
which is equivalent to the Zolotarev-Pollard formula for the Mittag-Leffler function in terms of the transition densities of stable subordinators, which is of great importance to this article,
\be\label{eq:mittagleffLT}
E_\beta (s) = \frac 1\beta \int_0^\infty e^{sx}  x^{-1-1/\beta} G_\beta(1, x^{-1/\beta})~\mathrm dx.
\ee
\end{exmp}

For the rest of this article we will take $a=0$ and write $D^\beta_0 := D^\beta_{0+*}$ and $D^{(\nu)}_0 := D^{(\nu)}_{0+*}$ to simplify formulas, but keep in mind that the boundary point $0$ can be replaced with $a\in \R$ by a shift in the time~coordinate.

As noted in \cite{kolokoltsov2017chronological}, we can extrapolate from the case
\[D^{\beta}_0 u(t) = -\lambda u(t), \quad \lambda \in\R, \quad u(0) = u_0,\]
to the Banach-valued version
\[D^{\beta}_0 u(t,x) = L u(t,x), \quad u(0, x) = Y(x),\]
where $L$ is some operator generating a Feller semigroup. One can expect that the solution to this equation can be written in terms of an operator-valued Mittag-Leffler function,
\be
	\label{eq:solutiondivergence}
	u(t, x) = E_\beta \left(Lt^\beta\right)Y(x),
\ee
where $E_\beta(s)$ are Mittag-Leffler functions defined by (\ref{eqdef:MLSeries}). However, this series representation does not allow one to define $E_\beta(L)$ for an unbounded operator $L$. In both \cite{kolokoltsovwell2014} and \cite{kolokoltsov2017chronological} the authors find that the most convenient way to overcome this difficulty is to use the formula (\ref{eq:mittagleffLT}) for the Mittag-Leffler function. This connection between Mittag-Leffler functions, Laplace transforms and stable densities is due to Zolotarev, \cite{zolotarev1957mellin, zolotarev1961analytic, zolotarev1986one}---although preliminary versions of this formula were also noted almost a decade earlier by Pollard, \cite{pollard1948completely}. Thus formula (\ref{eq:mittagleffLT}) could be called the Pollard-Zolotarev formula. 
Notice that if an operator $L$ generates a Feller semigroup with transition densities $G(t, x, y)$, then
\[
	e^{L t^\beta z}Y(\cdot) = \int_{\R^d} G(t^\beta z, \cdot, y) Y(y)~\mathrm dy.
\]

With the help of Fubini's theorem, the solution (\ref{eq:solutiondivergence}) can be written as
\begin{align*}
	u(t, x) &= E_\beta(t^\beta L)Y(x)\\
	& = \frac 1\beta \int_0^\infty e^{L t^\beta z}Y(x)z^{-1- \frac 1\beta}w_\beta(z^{-\frac 1\beta})~\mathrm dy\\
	&= \frac{1}{\beta}\int_0^\infty \int_{\R^d} G(t^{\beta}z, x, y) Y(y) z^{-1- \frac{1}{\beta}} w_\beta(z^{-\frac{1}{\beta}})~\mathrm dy~\mathrm dz\\
	& = \int_{\R^d}\left(\frac 1\beta\int_0^\infty G(t^\beta z, x, y) z^{-1-\frac 1\beta}w_\beta(z^{-\frac 1\beta})~dz\right)Y(y)~\mathrm dy\\
	& =: \int_{\R^d} G^{(\beta)}(t, x, y) Y(y)~\mathrm dy.
\end{align*}

Thus the main aim of this article is to obtain estimates for the Green's function given by,
\be\label{eq:Green'sfuncdivdiff}
G^{(\beta)}_L(t, x, y) =: \frac{1}\beta \int_0^\infty G_L(t^\beta z, x, y)z^{-1-\frac{1}{\beta}} w_\beta( z^{\frac{1}{\beta}})~\mathrm dz,
\ee
where $G_L(z, x, y)$ is the Green's function associated with the spatial operator $L$, i.e., the fundamental solution of
\[\partial_t u = L u.\]

\section{Global Estimates}
We first look at global in time two-sided estimates for $G^{(\beta)}_L$ in two special cases. Notice in (\ref{eq:Green'sfuncdivdiff}) that the integral over the time variable $z$ ranges from $0$ to $\infty$, and so in order to perform any estimates on the term $G_L(z, x,y)$ one can only use estimates that hold for all $z\in (0, \infty)$. We begin with two such cases, when one has global in time estimates for $G_L$. Namely, when $L$ is a second order uniformly elliptic operator in divergence form or when $L$ is a homogeneous pseudo-differential operator (with constant coefficients).

\subsection{Time-Fractional Diffusion Equation: Divergence Structure}\label{sec:aronsoncase}
In this section we consider the time-fractional diffusion equation given by
\be\label{eq:fracdiffusion}
D^\beta_0 u(t, x) = Lu(t,x) := \nabla \cdot(A(x) \nabla u(t, x)), \quad u(0, x) = Y(x),
\ee
where $D^\beta_0$ is the Caputo fractional derivative acting on the time variable, and the spatial operator is a second order elliptic operator in divergence form. For the conditions on the diffusion coefficient $A$ such that $L$ generates a Feller semigroup, see \cite{Stroock1988}.
Recall that the solution of (\ref{eq:fracdiffusion}) is given by
\[u(t, x) = E_\beta(Lt^\beta)Y(x),\]
and the associated Greens function is given by
\be\label{eqdef:GrFrDif}
G^{(\beta)}(t, x, y) = \frac 1\beta \int_0^\infty G(t^\beta z, x, y) z^{-1-\frac 1\beta} w_\beta(z^{-\frac 1\beta})~\mathrm dz,
\ee
where $G(t, x, y)$ is the Greens function associated with the second order elliptic operator in divergence form, (\ref{eq:paradivform}).
We have the following two-sided estimates for the Green's function $G^{(\beta)}$, which are global in time. In the following, we use the notation $\Omega := |x-y|^2 t^{-\beta}$.
\begin{thm}\label{thm:globaldivergenceestimate}
Assume that the function $A(x)$ is measurable, symmetric and satisfies (\ref{eq:ellipticity}) for some $\mu \geq 1$. Then there exists a constant $C$ such that for $(t, x, y) \in (0, \infty) \times \R^d \times \R^d$, the Green's function $G^{(\beta)}(t, x, y)$ for the time-fractional diffusion Equation (\ref{eq:fracdiffusion}) satisfies the following two-sided estimates,
\begin{itemize}
\item For $\Omega\leq 1$,
\be\label{eq:thm1case1}G^{(\beta)}(t, x, y) \asymp C\left\{
\begin{array}{lc}
t^{-\frac \beta 2}, & d = 1,\\
t^{-\beta}(|\log\Omega| + 1),& d=2,\\
t^{-\frac{d\beta}{2}}\Omega^{1-\frac d2}, & d \geq 3.
\end{array}
\right.
\ee

\item For $\Omega \geq 1$,
\be\label{eq:thm1case2}
G^{(\beta)}(t,x,y) \asymp
Ct^{-\frac{d\beta}2}\Omega^{-\frac d 2\left(\frac{1-\beta}{2-\beta}\right)}\exp\left\{-C_\beta\Omega^{\frac 1{2-\beta}}\right\}.
\ee
\end{itemize}
\end{thm}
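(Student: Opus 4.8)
The starting point is the Pollard--Zolotarev representation \eqref{eqdef:GrFrDif}, which expresses $G^{(\beta)}(t,x,y)$ as an integral over $z\in(0,\infty)$ of the Gaussian-type kernel $G(t^\beta z,x,y)$ weighted by $z^{-1-1/\beta}w_\beta(z^{-1/\beta})$. I would first substitute $r=z^{-1/\beta}$ (so $z=r^{-\beta}$, $\mathrm dz=-\beta r^{-\beta-1}\mathrm dr$) to turn this into an integral of the form $\int_0^\infty G(t^\beta r^{-\beta},x,y)\, r^{\,?}\, w_\beta(r)\,\mathrm dr$ with a clean power of $r$; the point is to have the argument of $G$ behave like a negative power of $r$ so that the $r\to\infty$ and $r\to0$ tails of $w_\beta$ (given by Proposition \ref{prop:asymptoticsstable}, i.e.\ \eqref{eq:stabledensityineq}) get paired against the small-$z$ / large-$z$ behaviour of the heat kernel. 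Using the Aronson bounds \eqref{eq:nash-aronson}, the kernel $G(s,x,y)$ is sandwiched between two expressions $s^{-d/2}\exp\{-c|x-y|^2/s\}$, so up to the constants $c_1,c_2,C_1,C_2$ the two-sided estimate for $G^{(\beta)}$ reduces to a two-sided estimate for a single scalar integral
\[
J_d(\Omega)\;:=\;\int_0^\infty (t^\beta z)^{-d/2}\exp\!\Big\{-c\,\frac{|x-y|^2}{t^\beta z}\Big\}\, z^{-1-1/\beta} w_\beta(z^{-1/\beta})\,\mathrm dz,
\]
and after pulling out the power of $t$ this is a function of $\Omega=|x-y|^2 t^{-\beta}$ alone. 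So the whole theorem becomes: find matching upper and lower bounds for $J_d(\Omega)$ in the regimes $\Omega\le 1$ and $\Omega\ge1$, in each dimension $d=1,2,d\ge3$.

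The heart of the argument is then a Laplace-type / Watson's-lemma analysis of $J_d(\Omega)$. For $\Omega\le1$ the exponential factor is harmless and one expects $J_d(\Omega)$ to be governed by the behaviour of the integrand for moderate-to-large $z$ (equivalently small $r=z^{-1/\beta}$), where $w_\beta(z^{-1/\beta})\sim \tilde c_\beta\, z^{-1-1/\beta}\cdot(\dots)$ — actually for large $z$, $z^{-1/\beta}\to0$, so one uses the $f_\beta$ asymptotics; one splits $\int_0^\infty = \int_0^{\Omega}+\int_{\Omega}^1+\int_1^\infty$ (or a similar dyadic split keyed to $\Omega$) and checks that for $d\ge3$ the dominant contribution is a power $\Omega^{1-d/2}$, for $d=2$ one picks up the logarithm $|\log\Omega|+1$ from an integral of $dz/z$ over a range of length $\sim|\log\Omega|$, and for $d=1$ the integral is bounded above and below by constants, giving the pure $t^{-\beta/2}$ term. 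For $\Omega\ge1$ the exponential $\exp\{-c\Omega/z\}$ forces the mass of the integral to concentrate near a saddle point $z_*\asymp \Omega^{?}$; balancing $\Omega/z$ against the power $z^{-\beta/(1-\beta)}$ coming from the small-argument asymptotics of $w_\beta$ gives the stretched-exponential rate $\exp\{-C_\beta\Omega^{1/(2-\beta)}\}$ and, by the standard Laplace-method prefactor computation, the polynomial correction $\Omega^{-\frac d2\frac{1-\beta}{2-\beta}}$. To make this rigorous rather than asymptotic I would, for the upper bound, use \eqref{eq:stabledensityineq} to replace $w_\beta$ by the global minimum-of-two-powers bound and then estimate the resulting elementary integral by splitting at the saddle; for the lower bound, restrict the integral to a fixed-proportion neighbourhood of $z_*$ where all factors are comparable to their values at $z_*$, which yields a lower bound of the same order.

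The main obstacle I anticipate is bookkeeping: getting the exponents in the polynomial prefactors (the $1-d/2$, the $-\frac d2\frac{1-\beta}{2-\beta}$, the constant $C_\beta$ in terms of $c_\beta=(1-\beta)\beta^{\beta/(1-\beta)}$ and the ellipticity constant) exactly right, and being careful that the constants $c_2\ne C_2$ in the Aronson bounds only affect the constant $C_\beta$ inside the exponential and not the \emph{form} of the estimate — since for a two-sided $\asymp$ statement with unspecified constants this is fine, but one must note that the lower-bound exponent must use the larger $c_2$ and the upper-bound the smaller, which is legitimate precisely because the claim absorbs this into $C_\beta$ and $C$. A secondary subtlety is the borderline restriction $\beta\in(0,1/2)$ versus $\beta\in[1/2,1)$ in \eqref{eq:stabledensityineq}: for $\beta\ge1/2$ the global min-of-powers bound is not available, so the integral must be split at $z^{-1/\beta}=1$ and the two pieces estimated separately, with the near-$0$ piece (small $z$) contributing the exponential tail and the large-$z$ piece the polynomial part; one checks the two cases give the same final answer. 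Finally, the spatial-derivative estimates (mentioned in the introduction but, I expect, collected in a companion statement) would follow by the same scheme applied to the known gradient bounds $|\nabla_x G(s,x,y)|\lesssim s^{-1/2}\cdot s^{-d/2}\exp\{-c|x-y|^2/s\}$, introducing one extra half-power of $z$ in the weight and shifting the exponents accordingly.
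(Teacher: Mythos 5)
Your proposal follows essentially the same route as the paper's proof: the Pollard--Zolotarev representation, splitting the $z$-integral according to the two asymptotic regimes of $w_\beta$ from (\ref{prop:asymptoticsstable}), inserting the Aronson bounds (\ref{eq:nash-aronson}) to reduce everything to a scalar integral in $\Omega$, then incomplete-gamma/logarithmic asymptotics for $\Omega\leq 1$ and a Laplace saddle-point balance (yielding $z_*\asymp\Omega^{(1-\beta)/(2-\beta)}$, hence the $\exp\{-C_\beta\Omega^{1/(2-\beta)}\}$ rate and the $\Omega^{-\frac d2\frac{1-\beta}{2-\beta}}$ prefactor) for $\Omega\geq 1$, exactly as in the paper's treatment of $I_1$ and $I_2$ via Proposition \ref{prop:asymptoticcomputation}. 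The only differences are cosmetic (your substitution $r=z^{-1/\beta}$ and split at $z=\Omega$ versus the paper's $w=z^{-1}$ and $V=\Omega w$), and your remarks on the mismatched Aronson constants and the $\beta\geq 1/2$ case are correct observations that the paper handles implicitly in the same way.
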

\begin{proof}
Let us begin by using the estimate (\ref{prop:asymptoticsstable}) for the stable density in (\ref{eqdef:GrFrDif}),
\begin{align*}
G^{(\beta)}(t,x, y) & = \frac 1\beta \int_0^\infty G(t^\beta z, x, y) z^{-1 -\frac 1\beta}w_\beta(z^{-\frac 1\beta})~\mathrm dz\\
& \asymp c_\beta \int_0^1 G(t^\beta z, x,y)~\mathrm dz + c_\beta \int_1^\infty G(t^\beta z, x, y) z^{-1-\frac 1\beta}f_\beta(z^{-\frac 1\beta})~\mathrm dz,
\end{align*}
where $f_\beta(x) = x^{-\frac{2-\beta}{2(1-\beta)}}\exp\{-c_\beta x^{-\frac \beta{1-\beta}}\}$. Next we apply Aronsons estimates (\ref{eq:nash-aronson}) to $G(t^\beta z, x, y)$,
\[
G^{(\beta)}(t, x, y) \asymp  Ct^{-\frac{d\beta}2}\int_0^1 z^{-\frac d2} \exp\{-\Omega z^{-1}\}~\mathrm dz + Ct^{-\frac{d\beta}2}\int_1^\infty z^{-\frac d2-1-\frac 1\beta} \exp\{-\Omega z^{-1}\} f_\beta(z^{-\frac 1\beta})~\mathrm dz,
\]
where $\Omega := |x-y|^2t^{-\beta}$. Making a change of variables $z = w^{-1}$ so that $\mathrm dz=-w^{-2}\mathrm dw$,
\begin{align}\label{eqprfdef:diffI1I2}
G^{(\beta)}(t, x, y) \asymp & ~ C t^{-\frac{d\beta}2}\int_1^\infty w^{\frac d2 - 2} \exp\{-\Omega w\}~\mathrm dw\nonumber\\
& + C t^{-\frac{d\beta}2}\int_0^1 w^{\frac d2 - 1 + \frac 1\beta}\exp\{-\Omega w\} f_\beta(w^{\frac 1\beta})~\mathrm dw\\
=: &~ I_1 + I_2.\nonumber
\end{align}

We now estimate $I_1$ and $I_2$ in two different cases, depending on the behaviour of $\Omega$.

\underline{\bf Case 1: $\Omega \leq 1$.}
Making a further substitution of $V = \Omega w$ in the integral $I_1$ gives us the simpler form of
\[I_1 = C t^{-\frac{d\beta}2}\Omega^{1-\frac d2}\int_\Omega^\infty V^{\frac d2 -2} \exp\{-V\}~\mathrm dV.\]

Now if $d = 1$, then we have the asymptotic behaviour
\[I_1 = t^{-\frac \beta 2}\Omega^{\frac 12} \int_\Omega^\infty V^{-\frac 32}\exp\{-V\} ~ \mathrm dV \sim t^{-\frac \beta 2}\Omega^{\frac 12} \Omega^{-\frac 12} = C t^{-\frac \beta 2}, \quad \text{ as } \Omega \rightarrow 0,\]
and in particular for $\Omega \leq 1$ there exists a constant $C > 0$ such that
\[I_1 \asymp Ct^{-\frac \beta 2}.\]

If $d=2$, then we see logarithmic behaviour,
\be\label{eqprf:i1log}
I_1 = C t^{-\beta} \int_\Omega^\infty V^{-1} \exp\{-V\}~C \mathrm dV \sim t^{-\beta}(|\log \Omega| + 1), \quad \Omega \rightarrow 0,
\ee
and in particular for $\Omega \leq 1$ there exists a constant $C> 0$ such that
\[I_1 \asymp C t^{-\beta}(|\log \Omega| + 1).\]

If $d\geq 3$, then the integral is the so-called upper incomplete gamma function, and has the asymptotic behaviour
\[I_1= C t^{-\frac{d\beta}2}\Omega^{1-\frac d2}\int_\Omega^\infty V^{\frac d2 - 1} \exp\{-V\}~ \mathrm dV \sim Ct^{-\frac{d\beta}2}\Omega^{1-\frac d2} \Gamma\left(\frac d2 - 1\right), \quad \text{ as } \Omega \rightarrow 0,\]
and in particular for $\Omega \leq 1$ there exists a constant $C> 0$ such that the two-sided estimate
\[I_1 \asymp C t^{-\frac{d\beta}2}\Omega^{1-\frac d2},\]
holds. Thus we have the following two-sided estimate for $I_1$,
\[ I_1 \asymp C\left\{
\begin{array}{lc}
t^{-\frac \beta 2}, & d = 1,\\
t^{-\beta}(|\log\Omega| + 1),& d=2,\\
t^{-\frac{d\beta}{2}}\Omega^{1-\frac d2}, & d \geq 3.
\end{array}
\right.
\]

Turning to the integral $I_2$,
\begin{align}\label{eqprf:diffI2moment}
I_2 = & ~C t^{-\frac{d\beta}2}\int_0^1 w^{-\frac d2 - 1 + \frac 1\beta} \exp\{-\Omega w\} f_\beta(w^{\frac 1\beta})~\mathrm dw\nonumber\\
=& ~C t^{-\frac{d\beta}2}\int_0^1 w^{-\frac d2 - 1 - \frac 1{2(1-\beta)}} \exp\{-\Omega w - c_\beta w^{-\frac 1{1-\beta}}\}~\mathrm dw\\
\asymp & ~ C_{d, \beta} t^{-\frac{d\beta}2},\nonumber
\end{align}
due to the fast decay of $f_\beta$ in a neighbourhood of $0$. Thus combining the estimates for $I_1$ and $I_2$ gives~(\ref{eq:thm1case1}).

\underline{\bf Case 2: $\Omega \geq 1$.}
In this case we use the Laplace method as described in Appendix \ref{sec:asymptoticmethods}. Firstly for $I_1$, using $g(w) = w^{\frac d2 - 1}$, $h(w) = w$ and $b=1$ in (\ref{eq:laplaceboundary}) we have
\[I_1 = C t^{-\frac{d\beta}2}\int_1^\infty w^{\frac d2 - 1} \exp\{-\Omega w\}~\mathrm dw \sim t^{-\frac {d\beta}2}\Omega^{-1}\exp\{-\Omega\},\]
and in particular the estimate
\[I_1 \asymp C t^{-\frac{d\beta}2} \Omega^{-1}\exp\{-\Omega\}, \quad \Omega \geq 1.\]

For the second integral, we use Proposition \ref{prop:asymptoticcomputation} with $N = \frac d2 - 1 - \frac 1{2(1-\beta)}$ and $a = \frac 1{1-\beta}$,
\begin{align*}
I_2 = & ~C t^{-\frac{d\beta}2}\int_0^1 w^{-\frac d2 - 1 + \frac 1\beta} \exp\{-\Omega w\} f_\beta(w^{\frac 1\beta})~\mathrm dw\\
\sim&~ C t^{-\frac{d\beta}2}\Omega^{-\frac d2 \left(\frac{1-\beta}{2-\beta}\right)} \exp\{-C\Omega^{\frac 1{2-\beta}}\},
\end{align*}
and again in particular, the two-sided estimate
\[I_2 \asymp Ct^{-\frac{d\beta}2}\Omega^{-\frac d2 \left(\frac{1-\beta}{2-\beta}\right)} \exp\{-C\Omega^{\frac 1{2-\beta}}\}.\]

Combinging the estimates for $I_1$ and $I_2$ shows (\ref{eq:thm1case2}), and we are done.

\end{proof}

If one additionally assumes that the diffusion coefficients $A(x)$ of (\ref{eq:paradivform}) are twice continuously differentiable, the following estimates hold for the spatial derivatives of the fundamental solution of~(\ref{eq:paradivform}),
\be \label{eq:spatialaronsonestimate}\left|\frac{\partial}{\partial x}G(t,x,y)\right| \leq C t^{-\frac{d+1}2}\exp\left\{-C\frac{|x-y|^2}t\right\},\ee
for $(t, x, y) \in (0, \infty) \times \R^d \times \R^d$. We next have estimates for the spatial deriviatve of $G^{(\beta)}(t,x, y)$. 
\begin{prop}
Under the same assumptions as  Theorem \ref{thm:globaldivergenceestimate}, assume additionally that $A(x)$ is twice continuously differentiable, then the following estimates for the spatial derivatives of the Green's function $G^{(\beta)}(t, x, y)$ holds for all $(t, x, y) \in (0, \infty) \times\R^d \times\R^d$,
\begin{itemize}
\item
For $\Omega \leq 1$,
\[
\left|\pderiv{}{x}G^{(\beta)}(t, x, y)\right| \leq C\left\{
\begin{array}{lc}
t^{-\beta}(|\log \Omega| + 1),& d=1, \\
t^{-\frac{(d+1)\beta}2}\Omega^{1-\frac{d+1}2},& d \geq 2.
\end{array}
\right.
\]

\item For $\Omega \geq 1$,
\[\left|\pderiv{}{x}G^{(\beta)}(t, x, y)\right| \leq C t^{-\frac{(d+1)\beta}2} \Omega^{-\frac{(d+1)}2\left(\frac{1-\beta}{2-\beta}\right)}\exp\left\{-C_\beta \Omega^{-\frac 1{2-\beta}}\right\}.\]
\end{itemize}
\end{prop}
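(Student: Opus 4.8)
The plan is to differentiate the representation (\ref{eqdef:GrFrDif}) under the integral sign and then repeat, essentially verbatim, the computation in the proof of Theorem \ref{thm:globaldivergenceestimate}, with the two-sided Aronson bound (\ref{eq:nash-aronson}) replaced by the one-sided gradient bound (\ref{eq:spatialaronsonestimate}). The key observation is that inserting (\ref{eq:spatialaronsonestimate}) at time $t^\beta z$ gives
\[\left|\pderiv{}{x}G(t^\beta z, x, y)\right| \leq C\,t^{-\frac{(d+1)\beta}{2}}\,z^{-\frac{d+1}{2}}\exp\{-C\Omega z^{-1}\},\qquad \Omega := |x-y|^2 t^{-\beta},\]
which is precisely the bound used for $G(t^\beta z,x,y)$ in that proof with $d$ replaced by $d+1$. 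Hence every estimate there goes through under the substitution $d\mapsto d+1$, which explains why the asserted bounds are exactly those of Theorem \ref{thm:globaldivergenceestimate} with $d\mapsto d+1$; in particular the logarithmic case $d=1$ here corresponds to the line $d=2$ there, and the power case $d\ge 2$ to the line $d\ge 3$. Since (\ref{eq:spatialaronsonestimate}) is only an upper bound, we obtain one-sided (upper) estimates only.

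Concretely, first I would justify that for fixed $t>0$ and $x\neq y$ one may differentiate under the integral in (\ref{eqdef:GrFrDif}), i.e.
\[\pderiv{}{x}G^{(\beta)}(t,x,y) = \frac{1}{\beta}\int_0^\infty \pderiv{}{x}G(t^\beta z, x, y)\,z^{-1-\frac{1}{\beta}}w_\beta(z^{-\frac{1}{\beta}})\,\mathrm dz,\]
by applying the mean value theorem to the difference quotients in $x$ and dominating them, uniformly for $x$ in a small ball around a fixed point off the diagonal, by an integrable function of $z$: the Gaussian factor in (\ref{eq:spatialaronsonestimate}) controls the singularity of $z^{-(d+1)/2}$ as $z\to 0$, while the decay $w_\beta(z^{-1/\beta})\sim\tilde c_\beta f_\beta(z^{-1/\beta})$ controls the behaviour as $z\to\infty$. (On the diagonal $x=y$ one has $\Omega=0$ and the claimed right-hand sides are infinite, so there is nothing to prove.) Then, applying the displayed gradient bound together with the stable-density asymptotics (\ref{prop:asymptoticsstable}), splitting the $z$-integral at $z=1$ and substituting $z=w^{-1}$, I obtain the exact analogue of (\ref{eqprfdef:diffI1I2}):
\[\left|\pderiv{}{x}G^{(\beta)}(t,x,y)\right| \leq C\,t^{-\frac{(d+1)\beta}{2}}\int_1^\infty w^{\frac{d+1}{2}-2}e^{-\Omega w}\,\mathrm dw + C\,t^{-\frac{(d+1)\beta}{2}}\int_0^1 w^{\frac{d+1}{2}-1+\frac{1}{\beta}}e^{-\Omega w}f_\beta(w^{\frac{1}{\beta}})\,\mathrm dw =: J_1 + J_2.\]

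The two pieces $J_1,J_2$ are then handled exactly as $I_1,I_2$ in the proof of Theorem \ref{thm:globaldivergenceestimate}, with $d$ replaced by $d+1$. For $\Omega\le 1$: substituting $V=\Omega w$ in $J_1$ produces an incomplete Gamma integral, which gives $C t^{-\beta}(|\log\Omega|+1)$ when $d=1$ (the exponent $\tfrac{d+1}{2}-2$ equals $-1$) and $C t^{-\frac{(d+1)\beta}{2}}\Omega^{1-\frac{d+1}{2}}$ when $d\ge 2$, while $J_2\asymp C t^{-\frac{(d+1)\beta}{2}}$ by the fast decay of $f_\beta$ near $0$ and is absorbed into $J_1$; for $\Omega\ge 1$: the Laplace method at the endpoint $w=1$ (Appendix \ref{sec:asymptoticmethods}) yields $J_1\le C t^{-\frac{(d+1)\beta}{2}}\Omega^{-1}e^{-\Omega}$, and Proposition \ref{prop:asymptoticcomputation} with $N=\tfrac{d+1}{2}-1-\tfrac{1}{2(1-\beta)}$ and $a=\tfrac{1}{1-\beta}$ yields $J_2\le C t^{-\frac{(d+1)\beta}{2}}\Omega^{-\frac{d+1}{2}\left(\frac{1-\beta}{2-\beta}\right)}e^{-C_\beta\Omega^{\frac{1}{2-\beta}}}$, which dominates $J_1$ since $\tfrac{1}{2-\beta}<1$. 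Summing $J_1$ and $J_2$ in each regime gives the two displayed bounds. The only step that is not a mechanical transcription of the earlier argument is the justification of differentiating under the integral sign; I expect that to be the main (and only minor) obstacle, to be dealt with as indicated by working locally off the diagonal and using the Gaussian and stable-density decay to build an integrable dominating function.
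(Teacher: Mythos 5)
Your proposal is correct and follows essentially the same route as the paper: differentiate under the integral, insert the gradient Aronson bound (\ref{eq:spatialaronsonestimate}), split at $z=1$, substitute $z=w^{-1}$, and observe that the resulting integrals are those of (\ref{eqprfdef:diffI1I2}) with $d$ replaced by $d+1$, then treat the two regimes of $\Omega$ via the incomplete Gamma function and Proposition \ref{prop:asymptoticcomputation} exactly as in Theorem \ref{thm:globaldivergenceestimate}. The only difference is that you spell out the justification for differentiating under the integral sign, which the paper passes over silently.
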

\begin{proof}
Recall that 
\[G^{(\beta)}(t, x, y) = \frac 1\beta \int_0^\infty G(t^\beta z, x, y) z^{-1 - \frac 1\beta}w_\beta(z^{-\frac 1\beta})~\mathrm dz,\]
where $G$ satisfies the global estimate (\ref{eq:spatialaronsonestimate}).
Using the triangle inequality after taking the derivative inside the integral,
\begin{align}\label{eqprf:glbldiffderiv}
\left|\pderiv{}{x}G^{(\beta)}(t, x, y)\right| =& \left|\pderiv{}{x}\frac 1\beta \int_0^\infty G(t^\beta z, x, y) z^{-1 - \frac 1\beta}w_\beta(z^{-\frac 1\beta})~\mathrm dz.\right|\nonumber\\
\leq&~C \int_0^\infty \left|\frac{\partial}{\partial x}G(t^\beta z,x,y)\right|z^{-1-\frac 1\beta} w_\beta(z^{-\frac 1\beta})~\mathrm dz\nonumber\\
\leq&~C t^{-\frac{(d+1)\beta}2}\int_0^\infty z^{-\frac{(d+1)}2}\exp\{-\Omega z^{-1}\}z^{-1-\frac 1\beta} w_\beta(z^{-\frac 1\beta})~\mathrm dz\nonumber\\
\leq&~C_\beta t^{-\frac{(d+1)\beta}2}\int_0^1 z^{-\frac{(d+1)}2}\exp\{-\Omega z^{-1}\}~\mathrm dz \\
&~+ C_\beta t^{-\frac{(d+1)\beta}2}\int_1^\infty z^{-\frac{d+1}2}\exp\{-\Omega z^{-1}\} z^{-1-\frac 1\beta} f_\beta(z^{-\frac 1\beta})~\mathrm dz\nonumber\\
=&~ C_\beta t^{-\frac{(d+1)\beta}2}\int_1^\infty w^{\frac{d+1}2 - 2} \exp\{-\Omega w\}~\mathrm dw\nonumber\\
&~+ C_\beta t^{-\frac{(d+1)\beta}2}\int_0^1 w^{\frac{d+1}2- 1 -\frac 1{2(1-\beta)}}\exp\{-\Omega w - c_\beta w^{-\frac 1{1-\beta}}\}~\mathrm dw\nonumber\\
:=&~I_1 + I_2,\nonumber
\end{align}
where in the above calculations, after using the estimates (\ref{eq:spatialaronsonestimate}) and (\ref{prop:asymptoticsstable}), we made the substitution $z = w^{-1}$. Note that the integrals $I_1$ and $I_2$ differ from those appearing in (\ref{eqprfdef:diffI1I2}) only by replacing $d$ with $d+1$. Thus the only change in the calculations is where the dimension dictates the behaviour of the estimate, namely in the integral $I_1$ under the regime $\Omega \leq 1$. In this case, make the substitution $w \Omega = V$,
\[
I_1 = C t^{-\frac{(d+1)\beta}2}\Omega^{1-\frac{d+1}2}\int_\Omega^\infty V^{\frac{d+1}2 - 2}\exp\{-V\}~\mathrm dV.
\]

For $d = 1$, we are in the same situation as (\ref{eqprf:i1log}), thus
\[
I_1 \sim Ct^{-\beta}(|\log\Omega| + 1), \quad \Omega \rightarrow 0,
\]
and in particular
\[
I_1 \leq Ct^{-\beta}(|\log\Omega| + 1), \quad \text{ for } \Omega \leq 1.
\]

Otherwise for $d \geq 2$ we have
\[
I_1 \leq C t^{-\frac{(d+1)\beta}2}\Omega^{1-\frac{d+1}2}\Gamma\left(\frac{d+1}2 - 1\right) = C_d t^{-\frac{(d+1)\beta}2}\Omega^{1-\frac{d+1}2}
\]

For the integral $I_2$, replacing $d$ with $d+1$ in (\ref{eqprf:diffI2moment}) does not spoil the estimate, thus
\[
I_2 \leq C_{d, \beta} t^{-\frac{(d+1)\beta}2}, \quad \text{ for } \Omega \leq 1.
\]

This shows
\[
\left|\pderiv{}{x}G^{(\beta)}(t, x, y)\right| \leq C\left\{
\begin{array}{lc}
t^{-\beta}(|\log \Omega| + 1),& d=1, \\
t^{-\frac{(d+1)\beta}2}\Omega^{1-\frac{d+1}2},& d \geq 2.
\end{array}
\right.
\]
for $\Omega \leq 1$ as required. For $\Omega \geq 1$, the estimates follow again by using the Laplace method. Namely taking $g(w) = w^{\frac{d+1}2-1}$, $h(w) = w$ and $b=1$ in (\ref{eq:laplaceboundary}) we have
\[
I_1 \leq t^{-\frac{(d+1)\beta}2} \Omega^{-1}\exp\left\{-\Omega\right\}, \quad \text{ for } \Omega \geq 1.
\]

Finally using $N = \frac{d+1}2 - 1- \frac 1{2(1-\beta)}$ and $a = \frac 1{1-\beta}$ in Proposition \ref{prop:asymptoticcomputation} gives us
\[
I_2 \leq C t^{-\frac{(d+1)\beta}2}\Omega^{-\frac{d+1}2\left(\frac{1-\beta}{2-\beta}\right)}\exp\left\{-C \Omega^{\frac 1{2-\beta}}\right\}, \quad \text{ for } \Omega \geq 1.
\]

Thus
\begin{align*}
\left|\pderiv{}{x}G^{(\beta)}(t, x, y)\right| & \leq I_1 + I_2\\
& \leq C t^{-\frac{(d+1)\beta}2}\Omega^{-\frac{d+1}2 \left(\frac{1-\beta}{2-\beta}\right)}\exp\{-C \Omega^{\frac 1{2-\beta}}\},
\end{align*}
as required. 
\end{proof}

\subsection{Time-Fractional Pseudo-Differential Evolution: Constant Coefficients}
Next we turn our attention to another class of problems, where the spatial operator is a homogeneous (constant coefficient) pseudo-differential operator. That is, for $\beta \in (0, 1)$ and $\alpha > 0$, 
\be\label{eq:fractionalevolstable}
D^\beta_0 u(t, x) = \Psi_\alpha(-i\nabla)u(t, x) , \quad u(0, x) = Y(x),
\ee
where $\Psi_\alpha$ is a pseudo-differential operator whose symbol is of the form
\[
\psi_\alpha(p) = -|p|^\alpha w_\mu(p/|p|),
\]
where $w_\mu$ is a positive function on $\mathbb S^{d-1}$, see (\ref{eq:defsymbolgeneral}). To this end, we use known properties of the Green's function $G_{\psi_\alpha}(t, x)$ of the evolution
\be
\label{eq:stableevolutionequation}\partial_t u = \Psi_{\alpha}(-i\nabla)u.
\ee

See for example \cite[Theorem 4.5.1]{kolokoltsov2019differential} or \cite{eidelman2004analytic} for the following estimates. Assuming that
\begin{itemize}
\item The function $w_\mu$ belongs to $C^{d+1+[\alpha]}(\mathbb S^{d-1})$.
\item The spectral measure $\mu$ has a density which is strictly positive (see (\ref{eqdef:spectralmeasure})).
\item $\alpha \in (0, 2)$,
\end{itemize}
then the Green's function $G_{\psi_\alpha}(t, x - y)$ of the evolution (\ref{eq:stableevolutionequation}) satisifies the following two-sided estimates for $(t, x, y) \in (0, \infty)\times \R^d \times \R^d$,
\be\label{eq:greensstable}
G_{\psi_\alpha}(t, x - y) \asymp C\min\left(\frac t{|x-y|^{d+\alpha}}, t^{-\frac d\alpha}\right).
\ee
Note that the restriction $\alpha \in (0, 2)$ and the positivity of the density of the spectral measure is required for the lower bound of $G_{\psi_\alpha}$ - the upper bound is still seen if we drop the strict positivity of the density $\mu$ and take any $\alpha > 0$. 

If additionally $w$ is $(d+1+[\alpha]+l)$-times continuously differentiable, then $G_{\psi_\alpha}(t, x)$ is $l$-times continuously differentiable in $x$ and for $(t, x,y)\in (0, \infty)\times \R^d\times \R^d$,
\be\label{eq:gstablederivative}
\left|\pderiv{^k}{x_{i_1}\cdots \partial x_{i_k}}G_{\psi_\alpha}(t,x-y)\right| \leq C \min \left(t^{-(d + k)/\alpha}, \frac{t}{|x-y|^{d+\alpha + k}}\right),
\ee
for all $k \leq l$ and $i_1, \cdots, i_k$. As discussed in the introduction, the solution of (\ref{eq:fractionalevolstable}) is given by
\[u(t,x) = E_{\beta}(\psi(-i\nabla)t^\beta)Y(x),\]
where
\[E_\beta (s) = \frac{1}{\beta}\int_0^\infty e^{sz} z^{-1-\frac{1}{\beta}} w_\beta( z^{-1/\beta})~\mathrm dz.\]

Thus the corresponding Greens function $G^{(\beta)}_{\psi_\alpha}(t, x, y)$ of (\ref{eq:fractionalevolstable}) is given by
\[
G^{(\beta)}_{\psi_\alpha}(t, x, y) := \frac 1\beta \int_0^\infty G_{\psi_\alpha}(t^\beta z, x-y) z^{-1-\frac 1\beta}w_\beta(z^{-\frac 1\beta})~\mathrm dz.
\]

In keeping with the previous section, we denote $\Omega := |x-y|^\alpha t^{-\beta}$. We have the following two-sided estimate for the Green's function $G^{(\beta)}_{\psi_\alpha}(t, x, y)$.

\begin{thm}\label{thm:stableglobal}
Let $\alpha\in (0, 2)$ and $\beta \in (0, 1)$. Assume that $w\in C^{(d+1+[\alpha])}(\mathbb S^{d-1})$, and that $w \geq w_0 > 0$ for some constant $w_0$. Further assume that the spectral measure $\mu$ of the stable operator $\Psi_\alpha$ has a strictly positive density. Then there exists a constant $C > 0$ such that the Green's function for the fractional evolution Equation~(\ref{eq:fractionalevolstable}) satisfies the following two-sided estimates for $(t, x-y) \in (0, \infty) \times \R^d$.

\begin{itemize}
\item For $\Omega\leq 1$,
\be\label{eqthm:greenchangedstablecase1}
G_{\psi_\alpha}^{(\beta)}(t, x- y) \asymp C\left\{
\begin{array}{lc}
t^{-\frac{d\beta}\alpha}, & d<\alpha ,\\
t^{-\beta}(|\log\Omega| + 1),& d=\alpha,\\
t^{-\frac{d\beta}{\alpha}}\Omega^{1-\frac d\alpha}, & d > \alpha.
\end{array}
\right.
\ee

\item For $\Omega \geq 1$,
\be\label{eqthm:greenschangedstablecase2}
	G^{(\beta)}_{\psi_\alpha}(t,x-y) \asymp C t^{-\frac{d\beta}{\alpha}}\Omega^{-1-\frac d\alpha}.
\ee
\end{itemize}
\end{thm}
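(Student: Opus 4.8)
The plan is to follow the same scheme as in the proof of Theorem~\ref{thm:globaldivergenceestimate}, with the Aronson bound replaced by the polynomial two-sided estimate (\ref{eq:greensstable}) for $G_{\psi_\alpha}$. First I would insert the stable-density asymptotics (\ref{prop:asymptoticsstable}) (equivalently (\ref{eq:stabledensityineq})) into
\[
G^{(\beta)}_{\psi_\alpha}(t,x,y)=\frac1\beta\int_0^\infty G_{\psi_\alpha}(t^\beta z, x-y)\,z^{-1-\frac1\beta}w_\beta(z^{-\frac1\beta})\,\mathrm dz .
\]
Since $z^{-1-1/\beta}w_\beta(z^{-1/\beta})\asymp 1$ on $(0,1)$ and $\asymp z^{-1-1/\beta}f_\beta(z^{-1/\beta})$ on $(1,\infty)$, this reduces (up to constants) to estimating $\int_0^1 G_{\psi_\alpha}(t^\beta z, x-y)\,\mathrm dz+\int_1^\infty G_{\psi_\alpha}(t^\beta z, x-y)\,z^{-1-\frac1\beta}f_\beta(z^{-\frac1\beta})\,\mathrm dz$, exactly as in the divergence case. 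I then substitute (\ref{eq:greensstable}) in the form $G_{\psi_\alpha}(t^\beta z, x-y)\asymp C\min\bigl(t^{-\frac{d\beta}\alpha}z^{-\frac d\alpha},\,t^\beta z\,|x-y|^{-d-\alpha}\bigr)$ and observe that the two branches of the minimum cross exactly at $z=\Omega=|x-y|^\alpha t^{-\beta}$: the first (decreasing) branch dominates for $z\ge\Omega$, the second (increasing) branch for $z\le\Omega$. Unlike the divergence case no Laplace-method asymptotics are needed, since every relevant quantity is a pure power of $z$; the only transcendental ingredient is the super-exponentially small tail of $f_\beta$.

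For $\Omega\le1$: on $(1,\infty)$ one has $z\ge1\ge\Omega$, so the second integral is $\asymp t^{-\frac{d\beta}\alpha}\int_1^\infty z^{-\frac d\alpha-1-\frac1\beta}f_\beta(z^{-\frac1\beta})\,\mathrm dz\asymp C_{d,\beta}\,t^{-\frac{d\beta}\alpha}$, the integral converging by the fast decay of $f_\beta$ near $0$ as in (\ref{eqprf:diffI2moment}). The first integral I split at $z=\Omega$: the piece on $(0,\Omega)$ is $\asymp t^\beta\Omega^2|x-y|^{-d-\alpha}=t^{-\frac{d\beta}\alpha}\Omega^{1-\frac d\alpha}$, while $\int_\Omega^1 t^{-\frac{d\beta}\alpha}z^{-\frac d\alpha}\,\mathrm dz$ produces a constant for $d<\alpha$, $|\log\Omega|$ for $d=\alpha$, and $\asymp\Omega^{1-\frac d\alpha}$ for $d>\alpha$ (the $\Omega$ endpoint dominating). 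Summing the three contributions, and noting that for $d=\alpha$ the other two pieces supply the additive $+1$ near $\Omega=1$, gives (\ref{eqthm:greenchangedstablecase1}).

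For $\Omega\ge1$: on $(0,1)$ only the increasing branch is active, so the first integral is $\asymp t^\beta|x-y|^{-d-\alpha}\int_0^1 z\,\mathrm dz=t^{-\frac{d\beta}\alpha}\Omega^{-1-\frac d\alpha}$. For the second integral I split again at $z=\Omega$: on $(1,\Omega)$ it is bounded by $t^\beta|x-y|^{-d-\alpha}\int_1^\infty z^{-\frac1\beta}f_\beta(z^{-\frac1\beta})\,\mathrm dz$, a constant multiple of the main term, and on $(\Omega,\infty)$, using $f_\beta(z^{-1/\beta})\le Cz^{\frac{2-\beta}{2\beta(1-\beta)}}\exp\{-c_\beta z^{\frac1{1-\beta}}\}$, it is at most $Ct^{-\frac{d\beta}\alpha}\exp\{-\tfrac{c_\beta}{2}\Omega^{\frac1{1-\beta}}\}$, negligible against $\Omega^{-1-\frac d\alpha}$. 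Hence both contributions are $O$ of the first integral and (\ref{eqthm:greenschangedstablecase2}) follows; as a consistency check, the two regimes agree at $\Omega=1$.

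The main obstacle is purely bookkeeping associated with the piecewise structure of the minimum in (\ref{eq:greensstable}): correctly locating the crossover $z=\Omega$, splitting each of the two $z$-integrals there, and, in each of the two $\Omega$-regimes and each dimensional sub-case, verifying which of the resulting power-law pieces dominates and that the $f_\beta$-weighted tail is always subdominant. No new analytic input beyond (\ref{eq:greensstable}) and (\ref{prop:asymptoticsstable}) is required.
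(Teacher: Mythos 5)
Your proposal is correct and follows essentially the same route as the paper: split the Pollard--Zolotarev integral at $z=1$ via the stable-density asymptotics (\ref{prop:asymptoticsstable}), insert the two-sided bound (\ref{eq:greensstable}) with the crossover of the minimum at $z=\Omega$, and track the resulting power-law pieces in each regime. The only cosmetic difference is that for the far tail $\int_\Omega^\infty$ in the regime $\Omega\ge 1$ the paper invokes the Laplace-method formula (\ref{eq:laplaceboundary}) while you bound it directly by a super-exponentially small term; both suffice since that contribution is subdominant.
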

\begin{proof}
We begin by splitting up the stable density using (\ref{prop:asymptoticsstable}),
\[
G^{(\beta)}_{\psi_\alpha}(t^\beta z, x - y) \asymp c_\beta \int_0^1 G_{\psi_\alpha}(t, x-y)~\mathrm dz + c_\beta \int_1^\infty G_{\psi_\alpha}(t^\beta z, x- y) z^{-1-\frac 1\beta}f_\beta(z^{-\frac 1\beta})~\mathrm dz,
\]
where $f_\beta(z)$ is defined in (\ref{prop:asymptoticsstable}). Before using the estimates (\ref{eq:greensstable}) for $G_{\psi_\alpha}$ (with $t = t^\beta z$), note that using the notation $\Omega = |x-y|^\alpha t^{-\beta}$, we have
\be\label{eq:minOmegasplit}
\min\left(t^{-\frac {d\beta}\alpha} \Omega^{-1-\frac d\alpha}z, t^{-\frac {d\beta}\alpha} z^{-\frac d\alpha}\right) = \left\{
\begin{array}{lc}
t^{-\frac{d\beta}\alpha}\Omega^{-1-\frac d\alpha}z, & \text{ for } z < \Omega,\\
t^{-\frac{d\beta}\alpha}z^{-\frac d\alpha},& \text{ for } z \geq \Omega.
\end{array}
\right.
\ee

Thus we have,
\begin{align}\label{eqprf:stabledefi1i2}
G^{(\beta)}_{\psi_\alpha}(t, x - y) \asymp &~c \int_0^1 \min\left(t^{-\frac {d\beta}\alpha} \Omega^{-1-\frac d\alpha}z, t^{-\frac {d\beta}\alpha} z^{-\frac d\alpha}\right) ~\mathrm dz\nonumber\\
&~+ c\int_1^\infty \min\left(t^{-\frac {d\beta}\alpha} \Omega^{-1-\frac d\alpha}z, t^{-\frac {d\beta}\alpha} z^{-\frac d\alpha}\right) z^{-1 - \frac 1\beta}f_\beta(z^{-\frac 1\beta})~\mathrm dz\\
:=&~I_1 + I_2.\nonumber
\end{align}

Now we deal with two cases.

\underline{\bf Case 1: $\Omega \leq 1$.}
Using \eqref{eq:minOmegasplit}, in this case the intergral $I_1$ equals
\begin{align*}
I_1 = & ~c t^{-\frac{d\beta}\alpha}\Omega^{-1-\frac d\alpha}\int_0^\Omega z~\mathrm dz +ct^{-\frac{d\beta}\alpha}\int_\Omega^1 z^{-\frac d\alpha}~\mathrm dz\\
= & ~\frac c2 t^{-\frac{d\beta}\alpha}\Omega^{1-\frac d\alpha} + c t^{-\frac{d\beta}\alpha}\int_\Omega^1 z^{-\frac d\alpha}~\mathrm dz.
\end{align*}

Note that for $d = \alpha$, the integral over the interval $(\Omega, 1)$ is
\[
t^{-\beta}\int_\Omega^1 z^{-1} ~\mathrm dz = t^{-\beta}|\log \Omega|.
\]

On the other hand, for $d \neq \alpha$ we have
\begin{align*}
t^{-\frac {d\beta}\alpha} \int_{\Omega}^1 z^{-\frac d\alpha}~dz &= \frac{1}{1-\frac d\alpha}t^{-\frac{d\beta}\alpha}(1-\Omega^{1-\frac d\alpha})\\
& \asymp C\left\{
\begin{array}{lc}
t^{-\frac {d\beta}\alpha}, & d < \alpha,\\
t^{-\frac{d\beta}\alpha}\Omega^{1-\frac d\alpha}, & d > \alpha.
\end{array}
\right.
\end{align*}

Thus in this case we have,
\[
I_1 \asymp C\left\{
\begin{array}{lc}
t^{-\frac{d\beta}\alpha}, & d<\alpha ,\\
t^{-\beta}(|\log\Omega| + 1),& d=\alpha,\\
t^{-\frac{d\beta}{\alpha}}\Omega^{1-\frac d\alpha}, & d > \alpha.
\end{array}
\right.
\]

Turning to $I_2$, note that the integral does not involve $\Omega$, and is convergent since $f_\beta(z^{-\frac 1\beta})$ is bounded and vanishes as $z \rightarrow \infty$. Thus
\[
I_2 = C t^{-\frac{d\beta}\alpha}\int_1^\infty z^{-\frac d\alpha - 1 - \frac1 \beta}f_\beta(z^{-\frac 1\beta})~\mathrm dz \asymp C_{\beta, d, \alpha} t^{-\frac{d\beta}\alpha}.
\]

Combining the estimates for $I_1$ and $I_2$ shows (\ref{eqthm:greenchangedstablecase1}).

\underline{\bf Case 2: $\Omega \geq 1$.}
In this case, the integral $I_1$ is simply
\[
I_1 = c t^{-\frac{d\beta}\alpha}\Omega^{-1-\frac d\alpha} \int_0^1 z~\mathrm dz  = \frac c2 t^{-\frac{d\beta}\alpha}\Omega^{-1-\frac d\alpha}.
\]

For the second integral, we have
\[
I_2 = c t^{-\frac{d\beta}\alpha}\Omega^{-1-\frac d\alpha}\int_1^\Omega z^{-\frac 1\beta}f_\beta(z^{-\frac 1\beta})~\mathrm dz + ct^{-\frac{d\beta}\alpha}\int_\Omega^{\infty} z^{-\frac d\alpha -1-\frac 1\beta}f_\beta(z^{-\frac 1\beta})~\mathrm dz.
\]

Note that the integral in the first term approaches a convergent integral (for large $\Omega$), while the second can be dealt with by using the Laplace method, see \ref{eq:laplaceboundary},
\begin{align*}
	I_2 \asymp &~c t^{-\frac{d\beta}\alpha}\Omega^{-1-\frac d\alpha} \int_1^\infty z^{-\frac 1\beta}f_\beta(z^{-\frac 1\beta})~\mathrm dz + ct^{-\frac{d\beta}\alpha}\int_{\Omega}^\infty z^{-\frac d\alpha - 1+\frac 1{2(1-\beta)}}\exp\{-c_\beta z^{\frac 1{1-\beta}}\}~\mathrm dz\\
	\asymp &~C_\beta t^{-\frac {d\beta}\alpha} \Omega^{-1-\frac d\alpha} + ct^{-\frac{d\beta}\alpha}\Omega^{-\frac d\alpha - \frac 1{2(1-\beta)}}\exp\{-c \Omega^{\frac 1{1-\beta}}\}\\
	\asymp &~C t^{-\frac {d\beta}\alpha} \Omega^{-1-\frac d\alpha},
\end{align*}
where we have used (\ref{eq:laplaceboundary}) with $g(x) = x^{-\frac d\alpha -1+ \frac 1{2(1-\beta)}}$ and $h(x) = x^{\frac 1{1-\beta}}$.
Combining the estimates for $I_1$ and $I_2$ proves (\ref{eqthm:greenschangedstablecase2}), which completes the proof.
\end{proof}

\begin{prop}
Under the assumptions of  Theorem \ref{thm:stableglobal}, assume additionally that $w \in C^{d + 1+[\alpha] + l}(\mathbb S^{d-1})$. Then the following estimates hold,
\begin{itemize}
\item For $\Omega\leq 1$,
\be\label{eqthm:stblderivcase1}
\left|\pderiv{^k}{x_{i_1}\cdots \partial x_{i_k}}G_{\psi_\alpha}^{(\beta)}(t, x- y)\right| \leq C\left\{
\begin{array}{lc}
t^{-\frac{(d+k)\beta}\alpha} & d+k<\alpha,\\
t^{-\beta}(|\log(\Omega)| + 1)& d+k=\alpha,\\
t^{-\frac{(d+k)\beta}\alpha}\Omega^{1-\frac{(d+k)}\alpha} & d+k>\alpha,
\end{array}
\right.
\ee
for all $k \leq l$ and $i_1, \cdots, i_k$.
\item For $\Omega \geq 1$,
\be\label{eqthm:stblderivcase2}
	\left|\pderiv{^k}{x_{i_1}\cdots\partial x_{i_k}}G_{\psi_\alpha}^{(\beta)}(t, x-y)\right| \leq Ct^{-\frac{(d+k)\beta}{\alpha}}\Omega^{-1-\frac{(d+k)}\alpha},
\ee
for all $k \leq l$ and $i_1, \cdots, i_k$.
\end{itemize}
\end{prop}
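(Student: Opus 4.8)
The plan is to mimic the proof of the preceding proposition, now using the derivative bound (\ref{eq:gstablederivative}) for $G_{\psi_\alpha}$ in place of (\ref{eq:spatialaronsonestimate}). First I would differentiate the representation
\[
G^{(\beta)}_{\psi_\alpha}(t, x-y) = \frac 1\beta \int_0^\infty G_{\psi_\alpha}(t^\beta z, x-y)\, z^{-1-\frac 1\beta} w_\beta(z^{-\frac 1\beta})\,\mathrm dz
\]
under the integral sign — justified by dominated convergence, the domination being supplied by (\ref{eq:gstablederivative}) uniformly in $x$ together with the fast (essentially exponential) decay of $z^{-1-\frac 1\beta}w_\beta(z^{-\frac 1\beta})$ as $z\to\infty$ and the boundedness of that weight as $z\to 0$ (equivalently, via the two-sided bound (\ref{eq:stabledensityineq})) — and then apply the triangle inequality to obtain
\[
\left|\pderiv{^k}{x_{i_1}\cdots\partial x_{i_k}}G^{(\beta)}_{\psi_\alpha}(t,x-y)\right| \leq \frac C\beta \int_0^\infty \min\!\left(( t^\beta z)^{-\frac{d+k}\alpha},\ \frac{t^\beta z}{|x-y|^{d+\alpha+k}}\right) z^{-1-\frac 1\beta} w_\beta(z^{-\frac 1\beta})\,\mathrm dz.
\]

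The key observation is that the right-hand side is \emph{exactly} the integral that appears in the proof of Theorem \ref{thm:stableglobal} with the dimension $d$ replaced throughout by $d+k$: writing $\Omega = |x-y|^\alpha t^{-\beta}$ one has $\min\bigl(( t^\beta z)^{-\frac{d+k}\alpha}, t^\beta z\,|x-y|^{-(d+\alpha+k)}\bigr) = \min\bigl(t^{-\frac{(d+k)\beta}\alpha} z^{-\frac{d+k}\alpha},\ t^{-\frac{(d+k)\beta}\alpha}\Omega^{-1-\frac{d+k}\alpha} z\bigr)$, which is of the same form as (\ref{eq:minOmegasplit}) with $d\mapsto d+k$. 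Hence I would split the stable density via (\ref{prop:asymptoticsstable}) as $\int_0^1(\cdots)\,\mathrm dz + \int_1^\infty(\cdots)z^{-1-\frac 1\beta}f_\beta(z^{-\frac 1\beta})\,\mathrm dz =: I_1 + I_2$ and run the two cases $\Omega\le 1$ and $\Omega\ge 1$ verbatim as in that proof. For $\Omega\le 1$, splitting the $\min$ at $z=\Omega$ produces the three regimes $d+k<\alpha$, $d+k=\alpha$ (logarithmic), $d+k>\alpha$ in $I_1$, while $I_2$ contributes at most $C t^{-\frac{(d+k)\beta}\alpha}$ since its integrand does not involve $\Omega$ and converges by the boundedness and decay of $f_\beta(z^{-1/\beta})$; this gives (\ref{eqthm:stblderivcase1}). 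For $\Omega\ge 1$, $I_1$ is the elementary $\tfrac c2 t^{-\frac{(d+k)\beta}\alpha}\Omega^{-1-\frac{d+k}\alpha}$, and $I_2$ splits at $z=\Omega$ into a convergent-integral term of the same polynomial order plus a tail that is handled by the Laplace method (\ref{eq:laplaceboundary}) with $g(x)=x^{-\frac{d+k}\alpha-1+\frac 1{2(1-\beta)}}$, $h(x)=x^{\frac 1{1-\beta}}$, which is exponentially small in $\Omega$ and so is absorbed into $C t^{-\frac{(d+k)\beta}\alpha}\Omega^{-1-\frac{d+k}\alpha}$; this gives (\ref{eqthm:stblderivcase2}).

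Since only upper bounds are needed, there is no new analytic difficulty: the entire argument is a transcription of the computation already carried out for Theorem \ref{thm:stableglobal} with the dimension parameter shifted by $k$, and the restriction $k\le l$ with $w\in C^{d+1+[\alpha]+l}(\mathbb S^{d-1})$ is precisely what guarantees that the required derivative estimate (\ref{eq:gstablederivative}) is available. The one genuinely non-formulaic point is the interchange of differentiation and integration, and that is the step I would expect to be the main (though mild) obstacle; it is dispatched by the uniform-in-$x$ domination coming from (\ref{eq:gstablederivative}) and the integrability of $z^{-1-\frac 1\beta}w_\beta(z^{-\frac 1\beta})$ against the slowest relevant power of $z$.
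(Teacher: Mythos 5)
Your proposal is correct and follows essentially the same route as the paper: split the stable density via (\ref{prop:asymptoticsstable}), insert the derivative bound (\ref{eq:gstablederivative}), observe that the resulting integrals are exactly those of the proof of Theorem \ref{thm:stableglobal} with $d$ replaced by $d+k$, and rerun the two cases $\Omega\le 1$ and $\Omega\ge 1$. Your explicit justification of the interchange of differentiation and integration is a small addition the paper leaves implicit, but it does not change the argument.
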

\begin{proof}
Using (\ref{prop:asymptoticsstable}) and (\ref{eq:gstablederivative}),
\begin{align*}
\left|\pderiv{^k}{x_{i_1}\cdots\partial x_{i_k}}G_{\psi_\alpha}^{(\beta)}(t, x-y)\right| \leq &~C_{k, d,\alpha, \beta}\int_0^1 \left|\pderiv{^k}{x_{i_1}\cdots\partial x_{i_k}}G_{\psi_\alpha}(t^\beta z, x-y)\right|~\mathrm dz\\
+& ~C\int_1^\infty \left|\pderiv{^k}{x_{i_1}\cdots\partial x_{i_k}}G_{\psi_\alpha}(t^\beta z, x-y)\right| z^{-1-\frac 1\beta} f_\beta(z^{-\frac 1\beta})~\mathrm dz\\
\leq &~I_1 + I_2,
\end{align*}
where
\[
I_1 := C \int_0^1 \min\left(t^{-\frac {(d+k)\beta}\alpha} \Omega^{-1-\frac{d+k}\alpha}z, t^{-\frac {(d+k)\beta}\alpha} z^{-\frac {d+k}\alpha}\right)~\mathrm dz,
\]
and
\be\label{eqprfdef:stablederivi1}
I_2 := C \int_1^\infty \min\left(t^{-\frac {(d+k)\beta}\alpha} \Omega^{-1-\frac{d+k}\alpha}z, t^{-\frac {(d+k)\beta}\alpha} z^{-\frac {d+k}\alpha}\right) z^{-1-\frac 1\beta} f_\beta(z^{-\frac 1\beta})~\mathrm dz.
\ee

Again we are in the situation where these integrals are the same as those found in (\ref{eqprf:stabledefi1i2}) after changing $d \mapsto d + k$. Thus we have for $\Omega \geq 1$,
\[
I_1 = \frac c2 t^{-\frac{(d+k)\beta}\alpha}\Omega^{-1-\frac{d+k}\alpha},
\]
and
\begin{align*}
I_2 = & ~c t^{-\frac{(d+k)\beta}\alpha}\Omega^{-1-\frac{d+k}\alpha}\int_1^\Omega z^{-\frac 1\beta}f_\beta(z^{-\frac 1\beta})~\mathrm dz + ct^{-\frac{(d+k)\beta}\alpha}\int_\Omega^{\infty} z^{-\frac{d+k}\alpha -1-\frac 1\beta}f_\beta(z^{-\frac 1\beta})~\mathrm dz\\
\leq & ~ C t^{-\frac{(d+k)\beta}\alpha} \Omega^{-1-\frac{d+k}\alpha} + c t^{-\frac{(d+k)\beta}\alpha} \Omega^{-\frac{d+k}\alpha - \frac 1{2(1-\beta)}}\exp\{-c\Omega^{\frac 1{1-\beta}}\}\\
\leq & ~ C_{d, k, \alpha, \beta}  t^{-\frac{(d+k)\beta}\alpha} \Omega^{-1-\frac{d+k}\alpha}.
\end{align*}

Combining the estimates for $I_1$ and $I_2$ gives us (\ref{eqthm:stblderivcase2}).
For $\Omega \leq 1$ we have
\[
I_ 2 = c t^{-\frac{(d+k)\beta}\alpha}\int_1^\infty z^{-\frac{d+k}\alpha - 1 - \frac 1\beta}f_\beta(z^{-\frac 1\beta})~\mathrm dz \leq C t^{-\frac{(d+k)\beta}\alpha}.
\]

It only remains to check the estimate for $I_1$ when $\Omega \leq 1$,
\begin{align*}
I_1 & = c t^{-\frac{(d+k)\beta}\alpha} \Omega^{-1-\frac{d+k}\alpha} \int_0^\Omega z~ \mathrm dz + c t^{-\frac{(d+k)\beta}\alpha}\int_\Omega^1 z^{-\frac{d+k}\alpha}~\mathrm dz \\
& = \frac c2 t^{-\frac{(d+k)\beta}\alpha} \Omega^{1-\frac{d+k}\alpha} + c t^{-\frac{(d+k)\beta}\alpha}\int_\Omega^1 z^{-\frac{d+k}\alpha}~\mathrm dz.
\end{align*}

Thus we have
\[
I_1 \leq  C\left\{
\begin{array}{lc}
t^{-\frac{(d+k)\beta}\alpha}, & d+k <\alpha,\\
t^{-\beta}(|\log\Omega| + 1),& d + k=\alpha ,\\
t^{-\frac{(d+k)\beta}{\alpha}}\Omega^{1-\frac{(d+k)}\alpha}, & d + k > \alpha.
\end{array}
\right.
\]

Combining the estimates for $I_1$ and $I_2$ for $\Omega \leq 1$ gives us (\ref{eqthm:stblderivcase1}).
\end{proof}

\section{Local Estimates}\label{sec:localestimates}
In the following two sections we look at two other families of spatial operators which extend the global estimates obtained in the previous sections. Firstly we consider a more general second order elliptic operator (not necessarily in divergence form), then we consider homogeneous pseudo-differential operators with variable coefficients. In both cases we provide local (i.e., small-time) two-sided estimates for the Green's functions of the associated fractional evolution equations. The key point here is that for these spatial operators, we no longer have global (in time) estimates for the associated Green's functions. Before going to the new estimates, we describe how one turns local estimates into global estimates.
If for some Green's functions $G_0(t, x, y), G_1(t, x,y)$, one has the local two-sided estimate for some constant $c > 0$
\[\frac{1}{c}G_1(t,x,y) \leq G_0(t,x,y) \leq c G_1(t,x,y), \quad (t,x,y)\in (0, T] \times \R^d\times \R^d,\]
for some fixed $T > 0$, then by taking convolutions and using the Chapman-Kolmogorov equations,
\begin{align*}
G_0(2t, x, y) &= \int_{\R^d} G_0(t, x, z)G_0(t, z, y)~dz\\
& \leq \int_{\R^d}cG_1(t, x, z)cG_1(t, z, y)~dz\\
& = c^2 G_1(2t, x, y).
\end{align*}

Repeating this procedure $n$-times,
\begin{align*}
G_0(nt, x, y) &= \int_{\R^d}\cdots \int_{\R^d} G_0(t, x, x_1)\cdots G_0(t, x_{n}, y)dx_1\cdots dx_{n}\\
& \leq \int_{\R^d}\cdots \int_{\R^d} cG_1(t, x, x_1)\cdots cG_1(t, x_{n}, y)dx_1\cdots dx_{n}\\
& = c^n G_1(nt, x, z).
\end{align*}

By fixing $t$ and setting $\tau = nt$ (so that $\tau \approx n$ for large values of $n$ and $\tau$), we then get
\begin{align*}
G_0(\tau, x, y) &\leq c^{\tau/t}G_1(\tau, x, y)\\
&=e^{\frac{\tau}t \log c} G_1(\tau, x, y)\\
&\approx e^{\tau \tilde{c}}G_1(\tau, x, y), \quad \forall \tau > 0, x, y \in \R^d
\end{align*}

Applying the same procedure to the lower bound gives us the global two-sided estimate 
\be\label{eq:tricklocal-to-global}
e^{-c\tau} G_1(\tau,x,y) \leq G_0(\tau,x,y) \leq e^{c\tau} G_1(\tau,x,y)
\ee
for all $(\tau, x, y) \in (0, \infty) \times \R^d \times \R^d$.

\subsection{Time-Fractional Diffusion Equation: General Non-Degenerate}
In Section \ref{sec:aronsoncase} we derived global two-sided estimates for the Green's function of fractional evolution equations involving a fractional derivative in time and a second order elliptic operator \emph{in divergence form} as the spatial operator. The key point in that case is that Aronsons estimates provides two-sided Gaussian estimates that hold globally for all time $t > 0$. In this section we consider the case that the spatial operator is any non-degenerate diffusion operator, which can generally be of the form
\be\label{eq:Lgeneraloperator}
Lu(t, x) := a_{ij}(x) \partial_{x_i}\partial_{x_j} u(t, x) + b_i(x) \partial_{x_i}u(t, x) + c(x) u(t, x)\ee

Assuming that $a(x)$ is uniformly elliptic and continuously differentiable, $b(x)$ and $c(x)$ are continuous, and the uniform bound holds:
\[\sup_x \max(|\nabla a(x)| , |b(x)| , |c(x)|) \leq M.\]

Then the Green's function associated with (\ref{eq:Lgeneraloperator}), satisfies the following local estimates:
\be\label{eq:localaronsonestimate}
\frac{t^{-\frac d2}}{C}\exp\left\{-C\frac{|x-y|^2}{t}\right\} \leq G(t, x, y) \leq C t^{-\frac d2}\exp\left\{-C\frac{|x-y|^2}{t}\right\},
\ee
for $(t, x, y) \in (0, T) \times \R^d\times \R^d$ for some fixed $T > 0$. We also have the following estimates for the spatial derivative of the Green's function $G$,
\be\label{eq:localaronsonderivest}\left|\pderiv{}{x}G(t,x,y)\right| \leq C t^{-\frac{d+1}2}\exp\left\{-C \frac{|x-y|^2}t\right\},
\ee
for $(t,x,y) \in (0, T)\times \R^d \times \R^d$.
The main obstacle now is that the estimates for the Green's function of (\ref{eq:Lgeneraloperator}) are only for small-time, thus a serious problem seems to arise when trying to insert the local estimate into the Pollard-Zolotarev formula, which involves integrating over all time $z \in (0, \infty)$. However we use the trick described in the previous section to make the local estimates global, in (\ref{eq:tricklocal-to-global}). To this end, the following two-sided estimate holds for $G(t, x, y)$ for all $(\tau, x, y) \in (0, \infty) \times \R^d\times \R^d$,
\be\label{eq:globalaronson}
e^{-c\tau}\tau^{-\frac{d}{2}}\exp\left\{-C\frac{|x-y|^2}{\tau}\right\}\leq G(\tau, x, y) \leq e^{c\tau}\tau^{-\frac{d}{2}}\exp\left\{-C\frac{|x-y|^2}{\tau}\right\},
\ee
for some constant $c$. In addition, for all $(\tau, x, y) \in (0, \infty) \times \R^d \times \R^d$,
\[
\left|\pderiv{}{x} G(\tau, x, y)\right| \leq  e^{c\tau}\max(\tau^{-\frac 12},1)\tau^{-\frac d2}\exp\left\{-C\frac{|x-y|^2}\tau\right\}.
\]

Alternatively we can split the estimates for the spatial derivative up into small-time and large-time - for $\tau \in (0, 1)$,
\be\label{eq:smallaronsonderiv}
\left|\pderiv{}{x} G(\tau, x, y)\right| \leq  C \tau^{-\frac{d+1}2}\exp\left\{-C\frac{|x-y|^2}\tau\right\},
\ee
and for $\tau \in (1, \infty)$,
\be\label{eq:largearonsonderiv}
\left|\pderiv{}{x} G(\tau, x, y)\right| \leq e^{c\tau}\tau^{-\frac d2}\exp\left\{-C\frac{|x-y|^2}\tau\right\}.
\ee

Now we proceed to obtain estimates for the Green's function of the fractional evolution
\[D^{\beta}_0 u(t, x) = Lu(t,x),\]
where $L$ is defined as above in (\ref{eq:Lgeneraloperator}). The Green's function for this fractional evolution equation is given~by
\be\label{eq:Green'slocaldiff}G^{(\beta)}(t, x, y) = \frac{1}{\beta}\int_0^\infty G(t^\beta z, x, y)z^{-1-\frac{1}{\beta}} w_\beta( z^{-\frac{1}{\beta}})~\mathrm dz.\ee

Again let $\Omega = |x-y|^2 t^{-\beta}$. We have the following local estimates for the Green's function $G^{(\beta)}$~above.
\begin{thm}\label{thm:fracdiffusionlocal}
Assume that $a(\cdot) \in C^1(\R^d)$ is uniformly elliptic and $b(\cdot),c(\cdot) \in C(\R^d)$. Suppose also that,
\[\sup_x\max (|\nabla a(x)|, |b(x)|, |c(x)|) \leq M.\]

Then for a fixed $T> 0$, there exists constants $C_1, C_2, C_3$ such that for $(t, x, y)\in (0, T]\times\R^d\times \R^d$ the Green's function $G^{(\beta)}(t, x, y)$ defined by (\ref{eq:Green'slocaldiff}) satisfies the following estimates,
\begin{itemize}
\item For $\Omega \leq 1$,
\be\label{eqthm:lcldiffG1}
G^{(\beta)}(t, x, y)\asymp C_1\left\{
\begin{array}{lc}
t^{-\frac \beta 2}, & d = 1,\\
t^{-\beta}(|\log\Omega| + 1),& d=2,\\
t^{-\frac{d\beta}{2}}\Omega^{1-\frac d2}, & d \geq 3.
\end{array}
\right.
\ee

\item For $\Omega \geq 1$,
\be\label{eqthm:lcldiffG2}
G^{(\beta)}(t, x, y) \asymp C_2 t^{-\frac{d\beta}2}\Omega^{-\frac d2\left(\frac{1-\beta}{2-\beta}\right)}\exp\{-C_3\Omega^{\frac 1{2-\beta}}\},
\ee
where $C_1, C_2$ depends on $T, d, \beta$ and $C_3$ depends on $T$ and $\beta$.
\end{itemize}
\end{thm}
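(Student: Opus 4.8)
The plan is to follow the proof of Theorem~\ref{thm:globaldivergenceestimate} essentially verbatim; the only genuinely new point is that the local bounds \eqref{eq:localaronsonestimate} for the spatial Green's function $G$ must first be globalized before being inserted into the Pollard--Zolotarev representation \eqref{eq:Green'slocaldiff}, and one has to keep track of the spurious exponential factor this introduces. First I would apply the Chapman--Kolmogorov argument \eqref{eq:tricklocal-to-global} recorded just before the theorem, which upgrades \eqref{eq:localaronsonestimate} to the global two-sided bound \eqref{eq:globalaronson}: for all $(\tau,x,y)\in(0,\infty)\times\R^d\times\R^d$,
\[
e^{-c\tau}\tau^{-d/2}\exp\{-C|x-y|^2/\tau\}\ \leq\ G(\tau,x,y)\ \leq\ e^{c\tau}\tau^{-d/2}\exp\{-C|x-y|^2/\tau\}.
\]
Substituting $\tau=t^\beta z$, feeding this into \eqref{eq:Green'slocaldiff}, splitting the stable density via its asymptotics \eqref{prop:asymptoticsstable} at $z=1$, and performing the change of variables $z=w^{-1}$ exactly as in \eqref{eqprfdef:diffI1I2}, I arrive at the same two integrals $I_1$ and $I_2$ as in Theorem~\ref{thm:globaldivergenceestimate}, except that each integrand now carries an extra factor $e^{\pm c t^\beta w^{-1}}$ (the $+$ sign for the upper estimate, the $-$ sign for the lower).

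Second I would check that this extra factor is harmless once $t$ is confined to $(0,T]$. In $I_1=Ct^{-d\beta/2}\int_1^\infty w^{d/2-2}\exp\{-\Omega w\}\,dw$, which originates from the region $z\in(0,1)$ where $\tau=t^\beta w^{-1}\leq t^\beta$, one has $e^{\pm ct^\beta w^{-1}}\in[e^{-cT^\beta},e^{cT^\beta}]$ uniformly in $w\geq1$; hence $I_1$ is comparable --- with constants now depending on $T$ --- to the $I_1$ of Theorem~\ref{thm:globaldivergenceestimate}, and both its $\Omega\leq1$ and $\Omega\geq1$ analyses carry over unchanged. In $I_2=Ct^{-d\beta/2}\int_0^1 w^{d/2-1-\frac{1}{2(1-\beta)}}\exp\{-\Omega w-c_\beta w^{-1/(1-\beta)}\}\,dw$, which originates from $z\in(1,\infty)$ where $\tau=t^\beta w^{-1}$ is unbounded as $w\to0$, the factor $e^{\pm ct^\beta w^{-1}}$ must be absorbed into the super-exponential term: since $1/(1-\beta)>1$ one has $w^{-1/(1-\beta)}=w^{-1}w^{-\beta/(1-\beta)}$ with $w^{-\beta/(1-\beta)}\to\infty$, so there is $w_0=w_0(T,\beta)$ with $ct^\beta w^{-1}\leq\tfrac{c_\beta}{2}w^{-1/(1-\beta)}$ for $w\leq w_0$, while on $[w_0,1]$ the factor is bounded by a constant depending on $T,\beta$. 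Consequently $\pm ct^\beta w^{-1}-c_\beta w^{-1/(1-\beta)}$ is dominated above by $-\tfrac{c_\beta}{2}w^{-1/(1-\beta)}$ and below by $-C' w^{-1/(1-\beta)}$ for some $C'=C'(T,\beta)$, up to a multiplicative constant depending on $T,\beta$; thus $I_2$ reduces, up to $T$-dependent constants, to the very integral studied in Theorem~\ref{thm:globaldivergenceestimate} but with the coefficient of $w^{-1/(1-\beta)}$ replaced by one depending on $T$ and $\beta$.

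Third I would reassemble. For $\Omega\leq1$ the estimate $I_2\asymp C_{T,d,\beta}t^{-d\beta/2}$ follows from the fast decay of the integrand near $w=0$ exactly as in \eqref{eqprf:diffI2moment}, and the dimension-dependent asymptotics of $I_1$ (the incomplete-gamma and logarithmic computations) are unchanged, giving \eqref{eqthm:lcldiffG1}. For $\Omega\geq1$ the Laplace method of the Appendix --- \eqref{eq:laplaceboundary} applied to $I_1$ and Proposition~\ref{prop:asymptoticcomputation} applied to $I_2$ with the modified exponent --- yields \eqref{eqthm:lcldiffG2}, the only change being that the constant $C_3$ multiplying $\Omega^{1/(2-\beta)}$, together with $C_1$ and $C_2$, now also depends on $T$ (respectively $T,d,\beta$ and $T,\beta$), as in the statement; the restriction to $(0,T]$ enters precisely here, since for $t$ unbounded the factor $e^{ct^\beta w^{-1}}$ in $I_2$ can no longer be absorbed uniformly.

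The step I expect to be the main obstacle is the bookkeeping in the second paragraph: showing that the spurious factor $e^{\pm ct^\beta w^{-1}}$ in $I_2$ is swallowed by $\exp\{-c_\beta w^{-1/(1-\beta)}\}$ uniformly in $t\in(0,T]$, in particular in the regime $w\to0$ with $\Omega$ large simultaneously, where the relevant saddle point of $-\Omega w-c_\beta w^{-1/(1-\beta)}$ sits at $w\asymp\Omega^{-(1-\beta)/(2-\beta)}$ and one must confirm that there $t^\beta w^{-1}\asymp t^\beta\Omega^{(1-\beta)/(2-\beta)}$ is of strictly lower order than $\Omega^{1/(2-\beta)}$ (because $1-\beta<1$) and hence can be absorbed into $C_3$ for $\Omega$ large depending on $T$. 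Everything else is a routine rerun of the computations in the proof of Theorem~\ref{thm:globaldivergenceestimate}.
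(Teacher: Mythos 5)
Your proposal is correct and follows essentially the same route as the paper: split the Pollard--Zolotarev integral at $z=1$, use the small-time bound on the region where $t^\beta z\leq T^\beta$, use the Chapman--Kolmogorov globalization \eqref{eq:globalaronson} on $z>1$, and absorb the resulting factor $e^{\pm ct^\beta z}$ into $\exp\{-c_\beta z^{1/(1-\beta)}\}$ for $t\leq T$ before applying the Laplace method and Proposition \ref{prop:asymptoticcomputation}. The only cosmetic difference is that you globalize the Aronson bound everywhere and then observe the extra factor is uniformly bounded on the $I_1$ region, whereas the paper applies the local estimate there directly; the key absorption step and the final assembly are identical.
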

\begin{proof}
First splitting up to the stable density,
\begin{align*}
G^{(\beta)}(t, x, y) &\asymp C_\beta \int_0^1 G(t^\beta z , x, y)~dz + C_\beta \int_1^\infty G(t^\beta z, x, y)z^{-1-\frac 1\beta} f_\beta(z^{-\frac 1\beta})~\mathrm dz\\
& =: I_1 + I_2.
\end{align*}

Note that on using the estimate (\ref{eq:localaronsonestimate}) in $I_1$, we have the same integral of the same name appearing in (\ref{eqprfdef:diffI1I2}). Thus for $\Omega \leq 1$,

\begin{align}\label{eqprf:lcldiffi1c1}
I_1 = C_\beta \int_0^1 G(t^\beta z, x, y)~\mathrm dz \asymp &~ C_{T} t^{-\frac{d\beta}2} \int_0^1 z^{-\frac{d}2}\exp\{-\Omega z^{-1}\}~\mathrm dz\nonumber\\
\asymp &~C \left\{
\begin{array}{lc}
t^{-\frac \beta 2}, & d = 1,\\
t^{-\beta}(|\log\Omega| + 1),& d=2,\\
t^{-\frac{d\beta}{2}}\Omega^{1-\frac d2}, & d \geq 3.
\end{array}
\right.
\end{align}

In addition for $\Omega \geq 1$,
\be\label{eqprof:diflocsmall}I_1 \asymp C_{T} t^{-\frac{d\beta}2} \Omega^{-1}\exp\{-\Omega\}.\ee

Turning our attention to $I_2$, let us consider seperately the upper and lower bound.
\\\underline{\bf Upper bound for $I_2$}\\
First applying the upper bound from (\ref{eq:globalaronson}) to $G$,
\begin{align}\label{eqprf:diffglbdefi2}
I_2 &\leq C t^{-\frac{d\beta}2}\int_1^\infty z^{-\frac d2 -1 - \frac 1\beta}\exp\{ct^\beta z - \Omega z^{-1}\}f_\beta(z^{-\frac 1\beta})~\mathrm dz\nonumber\\
& = C t^{-\frac{d\beta}2}\int_1^\infty z^{-\frac d2 -1 +\frac 1{2(1-\beta)}}\exp\{ct^\beta z - \Omega z^{-1}-c_\beta z^{\frac 1{1-\beta}}\}~\mathrm dz.
\end{align}

For $\Omega \leq 1$, we have
\begin{align*}
I_2 \leq &~C t^{-\frac{d\beta}2}\int_1^\infty z^{-\frac d2 - 1 - \frac 1{2(1-\beta)}} \exp\{ct^\beta z - c_\beta z^{\frac 1{1-\beta}}\}~\mathrm dz\\
\leq &~C t^{-\frac{d\beta}2}\int_1^\infty z^{-\frac d2 - 1 - \frac 1{2(1-\beta)}} \exp\{cT^\beta z - c_\beta z^{\frac 1{1-\beta}}\}~\mathrm dz\\
= & ~C_{T, d, \beta} t^{-\frac{d\beta}2},
\end{align*}
for $t < T$ for some fixed $T > 0$. Combining this with (\ref{eqprf:lcldiffi1c1}) gives (\ref{eqthm:lcldiffG1}). 

For $\Omega \geq 1$, we use again that the decay of $\exp\{-c_\beta z^{1/(1-\beta)}\}$ for large $z$ is stronger than the growth of $\exp\{ct^\beta z\}$ for large $z$ as long as $t < T$ for some fixed $T > 0$. That is,
\begin{align*}
I_2 &\leq C t^{-\frac{d\beta}2} \int_1^\infty z^{-\frac{d}2 - 1 + \frac 1{2(1-\beta)}}\exp\left\{-\Omega z^{-1}+ct^\beta z - c_\beta z^{\frac 1{1-\beta}}\right\}~\mathrm dz\nonumber\\
& \leq C t^{-\frac{d\beta}2} \int_1^\infty z^{-\frac d2 - 1 + \frac 1{2(1-\beta)}}\exp\left\{-\Omega z^{-1} - C_{T, \beta} z^{\frac 1{1-\beta}} \right\}~\mathrm dz\nonumber\\
& = C t^{-\frac{d\beta}2} \int_0^1 w^{\frac d2 - 1 -\frac 1{2(1-\beta)}} \exp\left\{-\Omega w - C_{T,\beta} w^{-\frac 1{1-\beta}}\right\}~\mathrm dw,
\end{align*}
where we have made the substitution $w = z^{-1}$ in the last line. Now we apply Proposition \ref{prop:asymptoticcomputation} with $N = \frac d2 - 1 - \frac 1{2(1-\beta)}$ and $a = \frac 1{1-\beta}$,
\[I_2 \leq C t^{-\frac{d\beta}2}\Omega^{-\frac d2 \left(\frac{1-\beta}{2-\beta}\right)}\exp\{-C \Omega^{\frac 1{2-\beta}}\}.\]

Note the constants in the above estimate depend on $T$. Combining this with (\ref{eqprof:diflocsmall}) gives us the required upper bound in (\ref{eqthm:lcldiffG2}).
\\\underline{\bf Lower bound for $I_2$}\\
Using the lower bound from (\ref{eq:globalaronson}) in $I_2$,
\[I_2 \geq c t^{-\frac{d\beta}2}\int_1^\infty z^{-\frac d2 - 1+\frac 1{2(1-\beta)}} \exp\{-c t^\beta z - \Omega z^{-1} - c_\beta z^{\frac 1{1-\beta}}\}~\mathrm dz.\]

Firstly for $\Omega \leq 1$,
\begin{align*}
I_2 &\geq C_\beta t^{-\frac{d\beta}2}\int_1^\infty z^{-\frac d2 - 1+ \frac 1{2(1-\beta)}} \exp\{-\Omega z^{-1} - c t^\beta z - c_\beta z^{\frac 1{1-\beta}}\}~\mathrm dz\nonumber\\
& \geq C_\beta t^{-\frac{d\beta}2}\int_1^\infty z^{-\frac d2 - 1+ \frac 1{2(1-\beta)}} \exp\{ - c t^\beta z - c_\beta z^{\frac 1{1-\beta}}\}~\mathrm dz\nonumber\\
& \geq C_{\beta, T} t^{-\frac{d\beta}2}\int_1^\infty z^{-\frac d2 - 1+ \frac 1{2(1-\beta)}} \exp\{ - cT^\beta z - c_\beta z^{\frac 1{1-\beta}}\}~\mathrm dz\nonumber\\
& = C_{T, \beta, d} t^{-\frac{d\beta}2}.
\end{align*}

Finally for $\Omega \geq 1$, 
\begin{align*}
I_2 & = C_\beta t^{-\frac{d\beta}2}\int_1^\infty z^{-\frac d2 - 1 + \frac 1{2(1-\beta)}} \exp\{-\Omega z^{-1} - c t^\beta z- c_\beta z^{\frac 1{1-\beta}}\}~\mathrm dz\\
& \geq C_\beta t^{-\frac{d\beta}2}\int_1^\infty z^{-\frac d2 - 1 + \frac{1}{2(1-\beta)}} \exp\{ - \Omega z^{-1} - (ct^\beta + c_\beta)z^{\frac 1{1-\beta}}\}~\mathrm dz\\
& \geq C_\beta t^{-\frac{d\beta}2}\int_1^\infty z^{-\frac d2 - 1 + \frac 1{2(1-\beta)}}\exp\{-\Omega z^{-1} - C_{T,\beta}z^{\frac 1{1-\beta}}\}~\mathrm dz,
\end{align*}
where we have used the fact that $\exp\{-ct^\beta z\} \geq \exp\{-ct^\beta z^{\frac 1{1-\beta}}\}$ for $z > 1$.
After making the substitution $z = w^{-1}$ we apply Proposition \ref{prop:asymptoticcomputation},
\begin{align*}
I_2 & \geq C_\beta t^{-\frac{d\beta}2} \int_0^1 w^{\frac d2 - 1 - \frac{1}{2(1-\beta)}} \exp\{-\Omega w - C_{T, \beta}w^{-\frac 1{1-\beta}}\}~\mathrm dw\\
&\geq C_1t^{-\frac{d\beta}2}\Omega^{-\frac d2 \left(\frac{1-\beta}{2-\beta}\right)}\exp\{-C_2 \Omega^{\frac 1{2-\beta}}\},
\end{align*}
where $C_1$ depends on $T, \beta$ and $d$, and $C_2$ depends on $T$ and $\beta$. Combining this with (\ref{eqprof:diflocsmall}) gives us the lower bound in (\ref{eqthm:lcldiffG2}), as required.
\end{proof}

Next we look at estimating the spatial derivative of the Green's function $G^{(\beta)}$, firstly for large-time using (\ref{eq:largearonsonderiv}) then for small-time using (\ref{eq:smallaronsonderiv}). As usual, let $\Omega := |x-y|^2 t^{-\beta}$. Firstly for large finite time,
\begin{prop}
Under the same assumptions as  Theorem \ref{thm:fracdiffusionlocal}, suppose further that $a(x)$ is twice continuously differentiable, and $b(x)$, $c(x)$ are continuously differentiable (with all derivatives bounded). Then for a fixed finite $T  > 1$, the following estimates hold for the spatial derivative of the Green's function $G^{(\beta)}(t, x, y)$ for $(t, x, y) \in (1, T) \times \R^d \times \R^d$, 
\begin{itemize}
\item
For $\Omega \leq 1$,
\be\label{eq:propderivgaussianlocal1}\left|\pderiv{}{x}G^{(\beta)}(t, x, y)\right| \leq C_{T, d, \beta}\left\{
\begin{array}{lc}
t^{-\beta}(|\log \Omega| + 1),& d = 1,\\
|x-y|^{1-d},& d \geq 2.
\end{array}
\right.
\ee

\item
For $\Omega \geq 1$,
\be
\label{eq:propderivgaussianlocal2}\left|\pderiv{}{x}G^{(\beta)}(t, x, y)\right| \leq C_{T, d, \beta} |x-y|^{-d\left(\frac{1-\beta}{2-\beta}\right)}\exp\{-C_{T, \beta} |x-y|^{\frac 2{2-\beta}}\}.
\ee
\end{itemize}
\end{prop}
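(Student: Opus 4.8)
The plan is to differentiate the Pollard--Zolotarev representation
\[
G^{(\beta)}(t,x,y)=\frac1\beta\int_0^\infty G(t^\beta z,x,y)\,z^{-1-\frac1\beta}w_\beta(z^{-\frac1\beta})\,\mathrm dz
\]
under the integral sign (justified, exactly as in the preceding propositions, by the pointwise bounds used below together with dominated convergence) and then bound $\left|\pderiv{}{x}G^{(\beta)}(t,x,y)\right|\le\frac1\beta\int_0^\infty\left|\pderiv{}{x}G(t^\beta z,x,y)\right|z^{-1-\frac1\beta}w_\beta(z^{-\frac1\beta})\,\mathrm dz$. Since $t\in(1,T)$ we have $t^{-\beta}\in(T^{-\beta},1)$, so as $z$ ranges over $(0,\infty)$ the argument $t^\beta z$ of $\partial_x G$ crosses the level $1$ at $z=t^{-\beta}<1$, while the stable density changes regime at $z=1$. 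I would therefore split the integral into $J_1$ over $(0,t^{-\beta})$, $J_2$ over $(t^{-\beta},1)$ and $J_3$ over $(1,\infty)$, applying on $J_1$ the small-time derivative bound (\ref{eq:smallaronsonderiv}), on $J_2$ and $J_3$ the large-time derivative bound (\ref{eq:largearonsonderiv}), and using (\ref{eq:stabledensityineq}) in the form $z^{-1-\frac1\beta}w_\beta(z^{-\frac1\beta})\le C_\beta$ for $z\in(0,1)$ and $w_\beta(z^{-\frac1\beta})\asymp c_\beta f_\beta(z^{-\frac1\beta})$ for $z\in(1,\infty)$.

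For $J_1$, inserting (\ref{eq:smallaronsonderiv}) and substituting $w=z^{-1}$ gives $J_1\le C_\beta t^{-\frac{(d+1)\beta}2}\int_{t^\beta}^\infty w^{\frac{d-3}2}\exp\{-C\Omega w\}\,\mathrm dw$, which is the integral $I_1$ from (\ref{eqprf:glbldiffderiv}) up to the harmless change of lower limit from $1$ to $t^\beta\in(1,T^\beta)$; since this limit stays in a fixed compact set the $\Omega$-asymptotics are unaffected, so for $\Omega\le1$ the incomplete-gamma (resp.\ logarithmic, when $d=1$) computation yields $J_1\le C t^{-\frac{(d+1)\beta}2}(|\log\Omega|+1)$ if $d=1$ and $J_1\le Ct^{-\frac{(d+1)\beta}2}\Omega^{1-\frac{d+1}2}$ if $d\ge2$, and writing $\Omega^{1-\frac{d+1}2}=|x-y|^{1-d}t^{\beta(d-1)/2}$ collapses these to $Ct^{-\beta}(|\log\Omega|+1)$ and $Ct^{-\beta}|x-y|^{1-d}$; since $t>1$ these are dominated by the right-hand side of (\ref{eq:propderivgaussianlocal1}), and for $\Omega\ge1$ the same integral, handled by the Laplace method (\ref{eq:laplaceboundary}) and using $t^\beta>1$ together with $\Omega\ge\Omega^{1/(2-\beta)}$, is absorbed into (\ref{eq:propderivgaussianlocal2}). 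The piece $J_2$ is easy: over the fixed compact interval $(T^{-\beta},1)$ the factor $e^{ct^\beta z}\le e^{cT^\beta}$, the power $z^{-d/2}\le T^{d\beta/2}$ and the stable weight are all bounded, and $\exp\{-C\Omega z^{-1}\}\le\min(1,e^{-C\Omega})$, so $J_2\le C_{T,\beta}$ when $\Omega\le1$ and $J_2\le C_{T,\beta}e^{-C\Omega}$ when $\Omega\ge1$; since $\Omega\le1$ forces $|x-y|\le T^{\beta/2}$ (so $|x-y|^{1-d}$ is bounded below) and $\Omega\ge1$ forces $|x-y|\ge1$, $J_2$ is dominated by the asserted bounds in both regimes.

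The crux is $J_3$. Writing $z^{-1-\frac1\beta}f_\beta(z^{-\frac1\beta})=z^{\frac{2\beta-1}{2(1-\beta)}}\exp\{-c_\beta z^{1/(1-\beta)}\}$ and inserting (\ref{eq:largearonsonderiv}) leaves $J_3\le Ct^{-\frac{d\beta}2}\int_1^\infty z^{-\frac d2-1+\frac1{2(1-\beta)}}\exp\{ct^\beta z-C\Omega z^{-1}-c_\beta z^{1/(1-\beta)}\}\,\mathrm dz$, which is, up to constants, exactly the integral $I_2$ appearing in the proof of Theorem \ref{thm:fracdiffusionlocal} (see (\ref{eqprf:diffglbdefi2}) onward). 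The genuinely new feature relative to the global divergence case is the growing factor $e^{ct^\beta z}$; the key point is that $1/(1-\beta)>1$, so a Young-type inequality gives constants depending only on $T$ and $\beta$ with $ct^\beta z-c_\beta z^{1/(1-\beta)}\le C_{T,\beta}-\tilde c_\beta z^{1/(1-\beta)}$ for all $z\ge1$ (e.g.\ with $\tilde c_\beta=c_\beta/2$), which is precisely where finiteness of $T$ enters and why every constant in the statement carries a $T$. After this absorption and the substitution $w=z^{-1}$, $J_3$ becomes $C_{T,\beta}t^{-\frac{d\beta}2}\int_0^1 w^{\frac d2-1-\frac1{2(1-\beta)}}\exp\{-C\Omega w-\tilde c_\beta w^{-1/(1-\beta)}\}\,\mathrm dw$: for $\Omega\le1$ the exponential singularity at $0$ forces $J_3\le C_{T,\beta,d}t^{-\frac{d\beta}2}\le C_{T,\beta,d}$, again dominated by (\ref{eq:propderivgaussianlocal1}), while for $\Omega\ge1$ Proposition \ref{prop:asymptoticcomputation} with $N=\frac d2-1-\frac1{2(1-\beta)}$ and $a=\frac1{1-\beta}$ gives $J_3\le C_{T,\beta}t^{-\frac{d\beta}2}\Omega^{-\frac d2\left(\frac{1-\beta}{2-\beta}\right)}\exp\{-C\Omega^{1/(2-\beta)}\}$; substituting $\Omega=|x-y|^2t^{-\beta}$, collecting the powers of $t$ into $t^{-d\beta/(2(2-\beta))}\le1$, and using $t\le T$ once more to turn $\exp\{-C\Omega^{1/(2-\beta)}\}$ into $\exp\{-C_{T,\beta}|x-y|^{2/(2-\beta)}\}$ reproduces (\ref{eq:propderivgaussianlocal2}). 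Summing $J_1+J_2+J_3$ then proves the proposition, with $J_1$ dominating for $\Omega\le1$ and $J_3$ for $\Omega\ge1$. I expect the bookkeeping in $J_3$---in particular the absorption of $e^{ct^\beta z}$ uniformly in $z\ge1$ and $t\in(1,T)$, and the final conversion of the $\Omega$-powers into $|x-y|$-powers---to be the main obstacle, the remainder being a repackaging of the computations already carried out in the proofs of Theorem \ref{thm:globaldivergenceestimate} and Theorem \ref{thm:fracdiffusionlocal}.
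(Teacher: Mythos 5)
Your proof is correct and follows essentially the same route as the paper's: split the Pollard--Zolotarev integral at the stable-density regime change, reuse the integrals already estimated in (\ref{eqprf:glbldiffderiv}) and (\ref{eqprf:diffglbdefi2}), absorb $e^{ct^\beta z}$ into $e^{-c_\beta z^{1/(1-\beta)}}$ using $t\le T$ before applying Proposition \ref{prop:asymptoticcomputation}, and convert powers of $\Omega$ into powers of $|x-y|$ using $t\in(1,T)$. The only (minor) difference is your finer split at $z=t^{-\beta}$, which applies each Aronson derivative bound exactly where it is stated; the paper uses the small-time bound on all of $(0,1)$, which is harmless since the two bounds are comparable for $t^\beta z$ in the compact set $(1,T^\beta)$.
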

\begin{proof}
We start as usual by first splitting up the integral into small and large $z$, and also use the triangle~inequality,
\begin{align*}
\left|\pderiv{}{x}G^{(\beta)}(t, x, y)\right|\leq C_{\beta} &\int_0^1 \left|\pderiv{}{x}G(t^\beta z, x, y)\right|~\mathrm dz\nonumber\\
+ &C_\beta \int_1^\infty \left|\pderiv{}{x}G(t^\beta z, x, y)\right| z^{-1-\frac 1\beta} f_\beta(z^{-\frac 1\beta})~\mathrm dz.
\end{align*}

Note that $t \in (1, T)$ means that $t^{-\beta} \in (T^{-\beta}, 1)$. Thus for $z \in (1, \infty)$ we have $z \geq t^{-\beta}$. Now we use the local estimate (\ref{eq:smallaronsonderiv}) for the first integral and (\ref{eq:largearonsonderiv}) for the second,
\begin{align*}
\left|\pderiv{}{x}G^{(\beta)}(t, x, y)\right| \leq &~C t^{-\frac{(d+1)\beta}2}\int_0^1 z^{-\frac{d+1}2}\exp\{-\Omega z^{-1}\}~\mathrm dz\\
&+ C t^{-\frac{d\beta}2}\int_1^\infty z^{-\frac d2 - 1 +\frac 1{2(1-\beta)}}\exp\left\{-\Omega z^{-1}+ct^\beta z - c_\beta z^{\frac 1{1-\beta}}\right\}~\mathrm dz\\
=: &~ I_1 + I_2.
\end{align*}

Note that the integral in $I_1$ is the same as (\ref{eqprf:glbldiffderiv}), and thus for $\Omega \leq 1$
\[I_1 = C t^{-\frac{(d+1)\beta}2} \int_0^1 z^{-\frac{d+1}2} \exp\{-\Omega z^{-1}\}\mathrm dz \leq \left\{
\begin{array}{lc}
t^{-\beta}(|\log \Omega| + 1),&d = 1,\\
t^{-\frac{(d+1)\beta}2}\Omega^{1-\frac{d+1}2},& d \geq 2.
\end{array}
\right.
\]

Note however that $t \in (1, T)$, which means that $t^{-\beta} \in (T^{-\beta}, 1)$. Thus
\be\label{eq:i1largefinitetimeondiagonal}
I_1 \leq C_{T, \beta, d}
\left\{
\begin{array}{lc}
t^{-\beta}(|\log \Omega| + 1),&d = 1,\\
|x-y|^{1-d},& d \geq 2.
\end{array}
\right.
\ee

For $\Omega \geq 1$,
\[I_1 \leq C t^{-\frac{(d+1)\beta}2}\Omega^{-1}\exp\{-\Omega\} \leq C_{T, d, \beta} |x-y|^{-2} \exp\{-C_{T,\beta} |x-y|^2\}.\]

As for the integral $I_2$, this is the same one which appeared in the previous proof, (\ref{eqprf:diffglbdefi2}), and thus for $\Omega \leq 1$,
\[I_2 \leq C t^{-\frac{d\beta}2} \leq C_{T, d, \beta}.\]

Combining this with (\ref{eq:i1largefinitetimeondiagonal}) which gives both (\ref{eq:propderivgaussianlocal1}).
Finally an application of Proposition \ref{prop:asymptoticcomputation} gives for $\Omega \geq 1$,
\begin{align*}
I_2 & = C t^{-\frac{d\beta}2}\int_1^\infty z^{-\frac d2 - 1 +\frac 1{2(1-\beta)}}\exp\left\{-\Omega z^{-1}+ct^\beta z - c_\beta z^{\frac 1{1-\beta}}\right\}~\mathrm dz\\
& \leq C t^{-\frac{d\beta}2}\int_1^\infty z^{-\frac d2 - 1 +\frac 1{2(1-\beta)}}\exp\left\{-\Omega z^{-1} - C_{T, \beta} z^{\frac 1{1-\beta}}\right\}~\mathrm dz\\
& \leq C t^{-\frac{d\beta}2} \Omega^{-\frac d2\left(\frac{1-\beta}{2-\beta}\right)}\exp\{-C \Omega^{\frac 1{2-\beta}}\}\\
& \leq C_{T, d, \beta} |x-y|^{-d\left(\frac{1-\beta}{2-\beta}\right)}\exp\{-C_{T, \beta} |x-y|^{\frac 2{2-\beta}}\}.
\end{align*}

Combining this with the estimate for $I_1$, gives the estimate (\ref{eq:propderivgaussianlocal2}) for $\Omega \geq 1$, as required.
\end{proof}

Next we have the estimates for small-time.
\begin{prop}
Under the same assumptions as  Theorem \ref{thm:fracdiffusionlocal}, suppose further that $a(x)$ is twice continuously differentiable, and $b(x)$, $c(x)$ are continuously differentiable (with all derivatives bounded). Then the following estimates hold for the spatial derivative of the Green's function $G^{(\beta)}(t, x, y)$ for $(t, x, y) \in (0, 1) \times \R^d \times \R^d$, 
\begin{itemize}
\item
For $\Omega \leq 1$,
\[\left|\pderiv{}{x}G^{(\beta)}(t, x, y)\right| \leq C_{d, \beta}\left\{
\begin{array}{lc}
t^{-\beta}(|\log \Omega| + 1),& d = 1,\\
t^{-\frac{(d+1)\beta}2}\Omega^{1-\frac{d+1}2},& d \geq 2.
\end{array}
\right.
\]

\item
For $1\leq \Omega \leq t^{-\beta\left(\frac{2-\beta}{1-\beta}\right)}$,
\[\left|\pderiv{}{x}G^{(\beta)}(t, x, y)\right| \leq C t^{-\frac{(d+1)\beta}2}\Omega^{-\left(\frac{d+1}2\right)\left(\frac{1-\beta}{2-\beta}\right)}\exp\{-C\Omega^{\frac 1{2-\beta}}\}.\]

\item
For $\Omega \geq t^{-\beta\left(\frac{2-\beta}{1-\beta}\right)}$,
\[\left|\pderiv{}{x}G^{(\beta)}(t, x, y)\right|\leq C t^{-\frac{d\beta}2}\Omega^{-\frac d2\left(\frac{1-\beta}{2-\beta}\right)} \exp\{-C\Omega^{\frac 1{2-\beta}}\}.\]

\end{itemize}
\end{prop}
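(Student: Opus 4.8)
The plan is to run the argument from the proofs of Theorem~\ref{thm:fracdiffusionlocal} and the preceding proposition, but to carry along the extra time cutoff $z=t^{-\beta}$ that separates the regions where the spatial Green's function obeys the small-time bound (\ref{eq:smallaronsonderiv}) (prefactor $t^{-(d+1)/2}$) from those where it obeys the large-time bound (\ref{eq:largearonsonderiv}) (prefactor $t^{-d/2}$, plus a factor $e^{c\tau}$). Differentiating under the integral in (\ref{eq:Green'slocaldiff}), applying the triangle inequality, and splitting the stable density via (\ref{prop:asymptoticsstable}) gives
\[
\left|\pderiv{}{x}G^{(\beta)}(t,x,y)\right| \leq C_\beta\int_0^1 \left|\pderiv{}{x}G(t^\beta z,x,y)\right|\mathrm dz + C_\beta\int_1^\infty \left|\pderiv{}{x}G(t^\beta z,x,y)\right| z^{-1-\frac1\beta}f_\beta(z^{-\frac1\beta})\,\mathrm dz =: J_1+J_2.
\]
Since $t\in(0,1)$ and $z<1$ force $t^\beta z<1$, I would bound $J_1$ by (\ref{eq:smallaronsonderiv}); after the substitution $z=w^{-1}$ this is literally the integral $I_1$ from the proof of the proposition following Theorem~\ref{thm:globaldivergenceestimate} with $d$ replaced by $d+1$, so it contributes $t^{-\beta}(|\log\Omega|+1)$ for $d=1$ and $t^{-\frac{(d+1)\beta}2}\Omega^{1-\frac{d+1}2}$ for $d\geq2$ when $\Omega\leq1$, and $Ct^{-\frac{(d+1)\beta}2}\Omega^{-1}e^{-\Omega}$ when $\Omega\geq1$. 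In $J_2$ the point $z=t^{-\beta}$ separates $\{t^\beta z<1\}$ from $\{t^\beta z>1\}$, so I would split $J_2=J_2^{(s)}+J_2^{(\ell)}$ over $(1,t^{-\beta})$ and $(t^{-\beta},\infty)$, using (\ref{eq:smallaronsonderiv}) on the first and (\ref{eq:largearonsonderiv}) on the second.

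After the substitution $w=z^{-1}$,
\[
J_2^{(s)}\asymp C t^{-\frac{(d+1)\beta}2}\int_{t^\beta}^1 w^{\frac{d+1}2-1-\frac1{2(1-\beta)}}\exp\{-\Omega w - c_\beta w^{-\frac1{1-\beta}}\}\,\mathrm dw,
\]
\[
J_2^{(\ell)}\leq C t^{-\frac{d\beta}2}\int_0^{t^\beta} w^{\frac d2-1-\frac1{2(1-\beta)}}\exp\{ct^\beta w^{-1}-\Omega w - c_\beta w^{-\frac1{1-\beta}}\}\,\mathrm dw.
\]
In $J_2^{(\ell)}$ the blow-up $e^{ct^\beta/w}$ as $w\to0$ is harmless because $\tfrac1{1-\beta}>1$: on $(0,t^\beta)$ one has $c_\beta w^{-1/(1-\beta)}\geq c_\beta t^{-\beta^2/(1-\beta)}w^{-1}\geq 2ct^\beta w^{-1}$ (possibly after shrinking the time window), so the net phase becomes $\Omega w+\tfrac{c_\beta}2 w^{-1/(1-\beta)}$. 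The illuminating point is that this phase $\phi(w)=\Omega w+c_\beta w^{-1/(1-\beta)}$ has a unique minimum at $w_\ast\asymp\Omega^{-(1-\beta)/(2-\beta)}$ with $\phi(w_\ast)\asymp\Omega^{1/(2-\beta)}$, and $w_\ast<t^\beta$ precisely when $\Omega>t^{-\beta(2-\beta)/(1-\beta)}$; equivalently, in $z$-variables the Laplace saddle of $J_2$ leaves the range $(1,t^{-\beta})$ exactly at that value of $\Omega$, which is why the threshold $\Omega=t^{-\beta(2-\beta)/(1-\beta)}$ appears in the statement.

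The three cases then follow by Laplace's method, as in the earlier proofs. For $\Omega\leq1$ the integrals in $J_2^{(s)},J_2^{(\ell)}$ are bounded by $w$-independent constants, so $J_2^{(s)}\leq Ct^{-(d+1)\beta/2}$ and $J_2^{(\ell)}\leq Ct^{-d\beta/2}\leq Ct^{-(d+1)\beta/2}$, hence $J_1$ dominates and gives the first estimate. For $\Omega\geq1$ I would extend the lower limit of $J_2^{(s)}$ to $0$ and the upper limit of (the absorbed) $J_2^{(\ell)}$ to $1$ (both legitimate for an upper bound), and apply Proposition~\ref{prop:asymptoticcomputation} with $a=\tfrac1{1-\beta}$ and $N=\tfrac{d+1}2-1-\tfrac1{2(1-\beta)}$, respectively $N=\tfrac d2-1-\tfrac1{2(1-\beta)}$, obtaining
\[
J_2^{(s)}\leq B_s:=Ct^{-\frac{(d+1)\beta}2}\Omega^{-\frac{d+1}2\left(\frac{1-\beta}{2-\beta}\right)}e^{-C\Omega^{1/(2-\beta)}},\qquad J_2^{(\ell)}\leq B_\ell:=Ct^{-\frac{d\beta}2}\Omega^{-\frac d2\left(\frac{1-\beta}{2-\beta}\right)}e^{-C\Omega^{1/(2-\beta)}},
\]
while $J_1\leq B_1:=Ct^{-(d+1)\beta/2}\Omega^{-1}e^{-\Omega}$. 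Now $B_1$ is dominated by $B_s$ for every $\Omega\geq1$ (both bounded on compacta, $B_1$ decaying faster at infinity), and a one-line comparison of prefactors shows $B_\ell\leq B_s\iff\Omega^{(1-\beta)/(2-\beta)}\leq t^{-\beta}\iff\Omega\leq t^{-\beta(2-\beta)/(1-\beta)}$. Hence the total is $\lesssim B_s$ on $1\leq\Omega\leq t^{-\beta(2-\beta)/(1-\beta)}$ and $\lesssim B_\ell$ on $\Omega\geq t^{-\beta(2-\beta)/(1-\beta)}$, which are exactly the second and third estimates.

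I expect the main obstacle to be the bookkeeping of dominance: one must check in each $\Omega$-window that the piece carrying the stated bound really dominates the other two, and this rests on the elementary comparisons of $\Omega^{1/(2-\beta)}$ with $\Omega$ and with $\Omega t^\beta$ (the latter changing sign precisely across $\Omega=t^{-\beta(2-\beta)/(1-\beta)}$) together with the two prefactors $t^{-(d+1)\beta/2}$ and $t^{-d\beta/2}$ coming from the small- and large-time derivative bounds. A subsidiary technical nuisance is the $e^{ct^\beta/w}$ blow-up in $J_2^{(\ell)}$; its absorption into $-c_\beta w^{-1/(1-\beta)}$ may require restricting $t$ to $(0,T_0)$ for some $T_0\leq1$ and readjusting the exponential constants (for $t$ near $1$ one can instead pass through the global bound (\ref{eq:globalaronson})). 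Everything else is the incomplete-gamma and Laplace-method computation already performed verbatim in the proofs of Theorem~\ref{thm:globaldivergenceestimate}, Theorem~\ref{thm:fracdiffusionlocal} and the propositions following them.
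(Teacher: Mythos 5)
Your proposal is correct and follows essentially the same route as the paper: the same three-way split of the $z$-integral at $1$ and $t^{-\beta}$ (using the small-time derivative bound on the first two pieces and the large-time bound on the third), the same incomplete-gamma and Laplace-method/Proposition~\ref{prop:asymptoticcomputation} computations, and the same final prefactor comparison $t^{-\beta/2}\Omega^{-\frac12\left(\frac{1-\beta}{2-\beta}\right)}\lessgtr 1$ that produces the threshold $\Omega=t^{-\beta(2-\beta)/(1-\beta)}$. Your extra remarks --- the saddle-point explanation of where that threshold comes from, the explicit check that the $I_1$ bound is dominated, and the careful absorption of the $e^{ct^\beta z}$ factor --- are refinements of, not departures from, the paper's argument.
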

\begin{proof}
Splitting the integral up using the stable density then using the estimates (\ref{eq:smallaronsonderiv}) and (\ref{eq:largearonsonderiv}), 
\begin{align*}
\left|\pderiv{}{x}G^{(\beta)}(t, x, y)\right| \leq &~C_{\beta} t^{-\frac{(d+1)\beta}2}\int_0^1 z^{-\frac{d+1}2}\exp\{-\Omega z^{-1}\}~\mathrm dz\\
&+ C_\beta \int_1^\infty \max((t^\beta z)^{-\frac{1}{2}}, 1)z^{-\frac d2 - 1-\frac 1\beta}\exp\left\{-\frac{\Omega}z+ct^\beta z \right\}f_\beta(z^{-\frac 1\beta})~\mathrm dz\\
=&~C_{\beta} t^{-\frac{(d+1)\beta}2}\int_0^1 z^{-\frac{d+1}2}\exp\{-\Omega z^{-1}\}~\mathrm dz\\
&+ C_\beta t^{-\frac{(d+1)\beta}2}\int_1^{t^{-\beta}}z^{-\frac{d+1}2 - 1 + \frac 1{2(1-\beta)}} \exp\left\{-\Omega z^{-1}+ct^\beta z- c_\beta z^{\frac 1{1-\beta}}\right\}~\mathrm dz\\
&+ C_\beta t^{-\frac{d\beta}2}\int_{t^{-\beta}}^\infty z^{-\frac d2 -1 + \frac 1{2(1-\beta)}} \exp\left\{-\Omega z^{-1}+ct^\beta z - c_\beta z^{\frac 1{1-\beta}}\right\}~\mathrm dz\\
=: &~ I_1 + I_2 + I_3.
\end{align*}

Now we investigate the usual cases.

\underline{\bf Case 1: $\Omega \leq 1$}

The integral in $I_1$, being the same as the one in (\ref{eq:i1largefinitetimeondiagonal}), has the upper bound
\[ I_1 = C t^{-\frac{(d+1)\beta}2} \int_0^1 z^{-\frac{d+1}2} \exp\{-\Omega z^{-1}\}~\mathrm dz \leq C\left\{
\begin{array}{lc}
t^{-\beta}(|\log \Omega| + 1),&d = 1,\\
t^{-\frac{(d+1)\beta}2}\Omega^{1-\frac{d+1}2},& d \geq 2.
\end{array}
\right.
\]

The other two integrals in $I_2$ and $I_3$ approach convergent integrals for bounded $\Omega$, so
\begin{align*}
I_2 =~&C t^{-\frac{(d+1)\beta}2} \int_1^{t^{-\beta}} z^{-\frac{d+1}2-1+\frac{1}{2(1-\beta)}} \exp\{-\Omega z^{-1} + ct^\beta z - c_\beta z^{\frac{1}{1-\beta}}\}~\mathrm dz\\
 \leq~&C t^{-\frac{(d+1)\beta}2} \int_1^{\infty} z^{-\frac{d+1}2-1+\frac{1}{2(1-\beta)}} \exp\{ct^\beta z - c_\beta z^{\frac{1}{1-\beta}}\}~\mathrm dz\\
 \leq~&C_{d, \beta} t^{-\frac{(d+1)\beta}2},
 \end{align*}
and
 \begin{align*}
 I_3 =&~C t^{-\frac{d\beta}2} \int_{t^{-\beta}}^\infty z^{-\frac d2 -1 + \frac 1{2(1-\beta)}} \exp\left\{-\Omega z^{-1}+ct^\beta z - c_\beta z^{\frac 1{1-\beta}}\right\}~\mathrm dz\\
 \leq &~ Ct^{-\frac{d\beta}2}\int_{t^{-\beta}}^\infty z^{-\frac d2 - 1 + \frac 1{2(1-\beta)}} \exp\left\{ct^\beta z - c_\beta z^{\frac 1{1-\beta}}\right\}~\mathrm dz\\
 \leq &~ t^{\beta-\frac{\beta}{2(1-\beta)}}\exp\{-C_\beta t^{-\frac \beta{1-\beta}}\}\\
 \leq &~C_{d, \beta} t^{-\frac{d\beta}2}.
  \end{align*}

Thus in this case,
\[\left|\pderiv{}{x}G^{(\beta)}(t, x,y)\right|\leq C\left\{
\begin{array}{lc}
t^{-\beta}(|\log \Omega| + 1),&d = 1,\\
t^{-\frac{(d+1)\beta}2}\Omega^{1-\frac{d+1}2},& d \geq 2.
\end{array}
\right.
\]

 \underline{\bf Case 2: $\Omega \geq 1$}
 
A direct application of the Laplace method gives
\[I_1 \leq t^{-\frac{(d+1)\beta}2}\Omega^{-1}\exp\{-\Omega\}.\]

For the second integral we have,
\begin{align*}
I_2\leq &~C t^{-\frac{(d+1)\beta}2}\int_1^\infty z^{-\frac d2 - 1+ \frac 1{2(1-\beta)}}\exp\{-\Omega z^{-1} - C_\beta z^{\frac 1{1-\beta}}\}~\mathrm dz \\
\leq &~C t^{-\frac{(d+1)\beta}2}\Omega^{-\left(\frac{d+1}2\right)\left(\frac{1-\beta}{2-\beta}\right)}\exp\{-C\Omega^{\frac 1{2-\beta}}\}
\end{align*}
where we have used Proposition \ref{prop:asymptoticcomputation} in the last estimate.
Finally since $t \in (0, 1)$, another application of Proposition \ref{prop:asymptoticcomputation} gives
 \begin{align*}
 I_3 &\leq Ct^{-\frac{d\beta}2}\int_1^\infty z^{-\frac d2 -1+\frac 1{2(1-\beta)}} \exp\{-\Omega z^{-1} - C z^{\frac 1{1-\beta}}\}~\mathrm dz \\
 & \leq C t^{-\frac{d\beta}2}\Omega^{-\frac d2\left(\frac{1-\beta}{2-\beta}\right)} \exp\{-C\Omega^{\frac 1{2-\beta}}\},
 \end{align*}

Note that
\[t^{-\frac{(d+1)\beta}2}\Omega^{-\left(\frac{d+1}2\right)\left(\frac{1-\beta}{2-\beta}\right)}\exp\{-C\Omega^{\frac 1{2-\beta}}\}  \leq C t^{-\frac{d\beta}2}\Omega^{-\frac d2\left(\frac{1-\beta}{2-\beta}\right)} \exp\{-C\Omega^{\frac 1{2-\beta}}\},\]
when $t^{-\frac \beta 2} \Omega^{-\frac 12 \left(\frac{1-\beta}{2-\beta}\right)}\leq 1$. Thus for $\Omega \geq t^{-\beta\left(\frac{2-\beta}{1-\beta}\right)}$,
\[\left|\pderiv{}{x}G^{(\beta)}(t, x,y)\right|\leq C t^{-\frac{d\beta}2}\Omega^{-\frac d2 \left(\frac{1-\beta}{2-\beta}\right)}\exp\{-C\Omega^{\frac 1{2-\beta}}\},\]
while for $1\leq \Omega \leq t^{-\beta\left(\frac{2-\beta}{1-\beta}\right)}$,
\[\left|\pderiv{}{x}G^{(\beta)}(t, x,y)\right|\leq C t^{-\frac{(d+1)\beta}2}\Omega^{-\frac{d+1}2\left(\frac{1-\beta}{2-\beta}\right)}\exp\{-C\Omega^{\frac 1{2-\beta}}\}.\]
\end{proof}

\subsection{Time-Fractional Pseudo-Differential Evolution: Variable Coefficients}

Finally we derive two-sided estimates for the Green's function of time-fractional stable-like equations. Stable-like operators are homogeneous pseudo-differential operators with variable coefficients (that depend on the spatial variable $x$, but not time). As noted earlier in (\ref{eq:greensstable}) the fundamental solution $G_\psi$ of the evolution equation
\[\partial_t u =-\psi_\alpha(-i \nabla)u,\]
with $\psi_\alpha(p) = |p|^\alpha w(p/|p|)$, satisfies the following two-sided estimate for all $(t, x - y) \in (0, \infty) \times \R^d$,  
\be\label{eq:stablegreenineq2}G_{\psi_\alpha}(t,x-y) \asymp C\min\left(\frac{t}{|x-y|^{d+\alpha}}, t^{-d/\alpha}\right).\ee

When the coefficients of the operator $\psi_\alpha$ depends also on the spatial variable, the same kind of estimates hold for small-time. Let $G_{\psi_\alpha, x}$ denote the fundamental solution to the pseudo-differential evolution equation
\[\partial_t u = -\psi_\alpha(x, -i\nabla)u,\]
with homogeneous symbol $\psi_\alpha(x, p) = |p|^\alpha w_\mu(x, p/|p|)$, where
\[w_\mu(x, p) = \int_{\mathbb S^{d-1}}|(p, s)|^\alpha \mu(x, \mathrm ds).\]

\begin{thm} Assume that $w_\mu(x, p)$ is a $\gamma$-Hölder continuous function in the variable $x$ taking values in a compact subset of $(0, \infty)$, $\gamma\in (0, 1]$. Assume further that $\mu$ has a strictly positive density. Then for some fixed $T>0$, there exists a constant $C > 0$ such that for $t\in (0,T)$ and $x, y \in \R^d$,
\[
\frac{1}{C}G_{\psi_\alpha}(t, x - y) \leq G_{\psi_\alpha, x}(t, x, y) \leq C G_{\psi_\alpha}(t, x - y)
\]
\end{thm}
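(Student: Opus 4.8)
The plan is to run the classical parametrix (Levi) construction, taking as zeroth approximation the Green's function with the coefficients of $\psi_\alpha(x,-i\nabla)$ \emph{frozen} at the endpoint $y$, and then to compare every term against the constant-coefficient two-sided bound \eqref{eq:stablegreenineq2}. First I would fix $y$ and set $Z(t,x,y):=G_{\psi_\alpha(y,\,\cdot\,)}(t,x-y)$, the Green's function of $\partial_t u=-\psi_\alpha(y,-i\nabla)u$. Since $w_\mu(y,\cdot)$ ranges over a fixed compact subset of $(0,\infty)$ (and, for each $y$, has the angular regularity under which \eqref{eq:greensstable}--\eqref{eq:gstablederivative} hold), the constants in those estimates may be chosen uniformly in the freezing point $y$; this yields $c_1,C_1>0$ with
\[
c_1\,G_{\psi_\alpha}(t,x-y)\ \le\ Z(t,x,y)\ \le\ C_1\,G_{\psi_\alpha}(t,x-y),\qquad (t,x,y)\in(0,\infty)\times\R^d\times\R^d,
\]
together with the matching pointwise bound on $\nabla_x Z$.

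Next I would write the unknown Green's function in Levi form,
\[
G_{\psi_\alpha,x}(t,x,y)\ =\ Z(t,x,y)\ +\ \int_0^t\!\!\int_{\R^d} Z(t-s,x,z)\,\Phi(s,z,y)\,\mathrm dz\,\mathrm ds,
\]
where $\Phi$ solves the Volterra equation $\Phi=K+K\star\Phi$ (space-time convolution) with discrepancy kernel $K(t,x,y):=\big(\psi_\alpha(x,-i\nabla)-\psi_\alpha(y,-i\nabla)\big)Z(t,\cdot,y)\big|_{x}$. Since $\psi_\alpha(y,-i\nabla)Z(t,\cdot,y)(x)=\partial_tZ(t,x,y)$, the kernel $K$ measures precisely the failure of the frozen kernel to solve the true equation, and formally $\Phi=\sum_{k\ge1}K^{\star k}$. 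The point of the whole argument is to control $K$: writing the fractional operators in L\'evy form, so that $\psi_\alpha(x,-i\nabla)-\psi_\alpha(y,-i\nabla)$ has L\'evy density $n(x,h)-n(y,h)$ with $|n(x,h)-n(y,h)|\le C|x-y|^\gamma|h|^{-d-\alpha}$ (from the $\gamma$-H\"older continuity of $w_\mu$ in $x$), and inserting the bounds for $Z$, $\nabla_x Z$ and the derivative bound \eqref{eq:gstablederivative}, one should obtain an estimate of the shape
\[
|K(t,x,y)|\ \le\ C\,t^{\frac\gamma\alpha-1}\,q_t(x-y),
\]
where $q_t$ is a stable-type kernel comparable to $G_{\psi_\alpha}(t,\cdot)$ (up to a slight loss in the polynomial tail), the essential features being the locally integrable time-prefactor $t^{\gamma/\alpha-1}$ and enough spatial decay to be preserved under space-time convolution. \emph{This kernel estimate is the main obstacle}: it is exactly where the stable-density asymptotics and the derivative bounds \eqref{eq:gstablederivative} (hence the assumed angular smoothness of $w_\mu$) are needed, and where one must check --- possibly splitting the analysis according to how $\gamma$ compares with $\alpha$ --- that the loss in the tail does not destroy integrability.

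Granting the discrepancy estimate, the rest is routine. The stable-type kernels are closed under space-time convolution up to a constant --- one reduces to the Chapman--Kolmogorov identity for genuine $\alpha$-stable densities via \eqref{eq:stablegreenineq2} --- so that for $a\in(0,1]$,
\[
\int_0^t\!\!\int_{\R^d} q_{t-s}(x-z)\,s^{a-1}q_s(z-y)\,\mathrm dz\,\mathrm ds\ \le\ C_a\,t^{a}\,q_t(x-y).
\]
An induction on $k$ with the Beta integral then gives $|K^{\star k}(t,x,y)|\le C^k\,\Gamma(\tfrac\gamma\alpha)^k\,\Gamma(\tfrac{k\gamma}\alpha)^{-1}\,t^{\frac{k\gamma}\alpha-1}\,q_t(x-y)$, a Mittag--Leffler-type series which converges for $t\in(0,T)$ and yields $|\Phi(t,x,y)|\le C\,t^{\frac\gamma\alpha-1}q_t(x-y)$. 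Substituting back into the Levi representation and using the first display,
\[
\Big|\int_0^t\!\!\int_{\R^d} Z(t-s,x,z)\,\Phi(s,z,y)\,\mathrm dz\,\mathrm ds\Big|\ \le\ C\,t^{\frac\gamma\alpha}\,G_{\psi_\alpha}(t,x-y),
\]
whence, for $t\in(0,T)$,
\[
\big(c_1-C\,T^{\gamma/\alpha}\big)\,G_{\psi_\alpha}(t,x-y)\ \le\ G_{\psi_\alpha,x}(t,x,y)\ \le\ \big(C_1+C\,T^{\gamma/\alpha}\big)\,G_{\psi_\alpha}(t,x-y).
\]
Shrinking $T$ so that $c_1-C\,T^{\gamma/\alpha}\ge c_1/2>0$ completes the proof; that the function produced by the Levi series is genuinely the Green's function of $\psi_\alpha(x,-i\nabla)$ (the series solves the equation and generates a conservative semigroup) is classical once the above kernel bounds are in place.
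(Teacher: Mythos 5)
First, a point of reference: the paper does not prove this theorem at all --- it is quoted from the literature (the companion derivative estimates are attributed to \cite{kolokoltsov2019differential}, Theorem 5.8.3), with only the remark that ``one would hope that operators with variable coefficients can be approximated by the method of freezing coefficients.'' Your parametrix (Levi) construction with coefficients frozen at $y$ is exactly the method behind the cited result, and your outline of the upper bound is essentially sound: the uniformity of the constants in the freezing point, the H\"older bound on the discrepancy kernel with the locally integrable prefactor $t^{\gamma/\alpha-1}$, and the Beta-function induction giving the Mittag--Leffler-type convergence of $\sum_k K^{\star k}$ are all the standard ingredients.

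There is, however, a genuine gap in the lower bound, concentrated in your penultimate display. The claim $\bigl|\int_0^t\!\int Z(t-s,x,z)\Phi(s,z,y)\,\mathrm dz\,\mathrm ds\bigr|\le C\,t^{\gamma/\alpha}G_{\psi_\alpha}(t,x-y)$ fails off the diagonal. Test it on the first iterate with $|x-y|=R\gg t^{1/\alpha}$ and look at the contribution from $z$ in a neighbourhood of $x$: there $|K(s,z,y)|\asymp(|z-y|^\gamma\wedge 1)\,\bigl|\psi_\alpha(y,-i\nabla)Z(s,\cdot,y)(z)\bigr|\asymp(R^\gamma\wedge 1)\,R^{-d-\alpha}$ (the operator of order $\alpha$ applied to the stable kernel behaves like $\partial_s Z\asymp|z-y|^{-d-\alpha}$ in the tail, with \emph{no} factor of $s$), while $\int_{|x-z|\le R/2}Z(t-s,x,z)\,\mathrm dz=O(1)$. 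Integrating over $s\in(0,t)$ yields a contribution of order $(R^\gamma\wedge 1)\,t\,R^{-d-\alpha}\asymp(|x-y|^\gamma\wedge 1)\,G_{\psi_\alpha}(t,x-y)$, i.e.\ the correction is comparable to the main term whenever $|x-y|\gtrsim 1$, with no gain of $T^{\gamma/\alpha}$. Consequently $Z-|Z\star\Phi|$ can only be bounded below by a positive multiple of $G_{\psi_\alpha}$ in a region $\{|x-y|\le\delta\}$, not on all of $\R^d\times\R^d$. (The upper bound survives, since $C_1+C$ is still a constant.) The missing ingredient is a separate off-diagonal lower-bound argument --- the standard one chains the near-diagonal estimate $G_{\psi_\alpha,x}(t,x,y)\ge c\,t^{-d/\alpha}$ for $|x-y|\le\delta t^{1/\alpha}$ with a single-big-jump estimate via the L\'evy system, which is precisely where the assumed strict positivity of the spectral density is used beyond its role in bounding $Z$ from below. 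Your sketch never invokes that hypothesis except through $Z$, which is a sign that the off-diagonal lower bound has not actually been addressed.
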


What this means is that the global in time estimates (\ref{eq:stablegreenineq2}) for the Green's function $G_{\psi_\alpha}$, also serve as a small-time estimate for the Green's function $G_{\psi_\alpha, x}$. Indeed one would hope that operators with variable coefficients can be approximated by the method of freezing coefficients. So we have the following small-time estimate for $t \in (0, T)$, $x, y\in \R^d$
\be\label{eq:stablelikegreenineq}\frac 1C \min\left(\frac t{|x-y|^{d+\alpha}}, t^{-\frac d\alpha}\right) \leq G_{\psi_\alpha, x}(t,x,y) \leq C\min\left(\frac{t}{|x-y|^{d+\alpha}}, t^{-\frac d\alpha}\right),\ee
for some fixed $0< T < \infty$.
We also have the following estimates for the spatial derivatives of the $G_{\psi_\alpha, x}$, see \cite{kolokoltsov2019differential} (Theorem 5.8.3).
\begin{thm}\label{thm:localderivativesstablelike}
Let $\alpha > 0$, and denote by $l$ the maximal integer less than $\alpha$. Assume that $\mu \geq \mu_0 > 0$, for some positive number $\mu_0$, and $w_\mu(x, p)$ is $q$-times differentiable in $x$ and each of these derivatives be $(d+1 + (l+q)(\alpha + 1))$-times continuously differentiable in $p$ and all bounds uniform in $x, p$. Then for a fixed $T > 0$ and any $k\leq l$,
\be\label{eq:localderivativestablelike}
\left|\pderiv{^k}{x_{i_1}\cdots\partial x_{i_k}}G_{\psi_\alpha, x}(t, x, y)\right| \leq C \min\left(\frac{t}{|x-y|^{d+k+\alpha}},t^{-\frac{(d+k)}\alpha}\right)
\ee
for $(t, x, y) \in (0, T)\times \R^d \times \R^d$. 
\end{thm}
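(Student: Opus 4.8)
The plan is to establish (\ref{eq:localderivativestablelike}) by the parametrix (Levi) method, freezing the spatial argument of the symbol and bootstrapping from the constant-coefficient estimates (\ref{eq:greensstable}) and (\ref{eq:gstablederivative}). First I would fix the parametrix $Z(t,x,y):=G_{\psi_\alpha}^{\,y}(t,x-y)$, where $G_{\psi_\alpha}^{\,\xi}$ denotes the Green's function of the frozen evolution $\partial_t u=-\psi_\alpha(\xi,-i\nabla)u$; under the present hypotheses ($w_\mu\geq\mu_0>0$, the required $p$-smoothness, strictly positive density), the bounds (\ref{eq:greensstable}) and (\ref{eq:gstablederivative}) hold for $G_{\psi_\alpha}^{\,\xi}$ and its $x$-derivatives of every order $\leq l$, uniformly in the frozen parameter $\xi$. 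Then I would look for $G_{\psi_\alpha,x}$ in the form
\[
G_{\psi_\alpha,x}(t,x,y)=Z(t,x,y)+\int_0^t\!\!\int_{\R^d}Z(t-s,x,z)\,\Phi(s,z,y)\,\mathrm dz\,\mathrm ds,
\]
where $\Phi$ solves the integral equation $\Phi=K+K\star\Phi$ with free term $K(t,x,y):=\big(\psi_\alpha(y,-i\nabla)-\psi_\alpha(x,-i\nabla)\big)Z(t,x,y)$ and $\star$ denoting the space--time convolution.

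The next step is to bound the free term. Since $\psi_\alpha(x,p)-\psi_\alpha(y,p)=|p|^\alpha\big(w_\mu(x,p/|p|)-w_\mu(y,p/|p|)\big)$, the $\gamma$-H\"older continuity of $w_\mu$ in $x$, combined with the ``$\alpha$ derivatives'' worth of spatial decay carried by $G_{\psi_\alpha}^{\,y}$, should give
\[
|K(t,x,y)|\leq C\,t^{-1+\gamma/\alpha}\min\!\left(\frac{t}{|x-y|^{\,d+\alpha-\gamma}},\;t^{-(d-\gamma)/\alpha}\right),
\]
so that the H\"older gain $|x-y|^\gamma$ softens the $t^{-1}$ singularity to the integrable exponent $-1+\gamma/\alpha$. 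Iterating, I would prove by induction that the convolution powers $K^{\star n}$ satisfy bounds with time-exponent $-1+n\gamma/\alpha$ and Beta-function constants, so that $\Phi=\sum_{n\geq1}K^{\star n}$ converges (the series being of Mittag--Leffler type in $t^{\gamma/\alpha}$, locally uniformly on $(0,T)$) and obeys the same estimate as $K$. Feeding this into $Z\star\Phi$ and comparing with $Z$ then recovers (\ref{eq:stablelikegreenineq}), i.e. the case $k=0$ of (\ref{eq:localderivativestablelike}).

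For $k\geq1$ I would differentiate the representation in $x$. The leading piece $\pderiv{^k}{x_{i_1}\cdots\partial x_{i_k}}Z$ is controlled directly by (\ref{eq:gstablederivative}) applied to the frozen operator, uniformly in the frozen point, and contributes exactly the claimed $\min\big(t|x-y|^{-(d+k+\alpha)},t^{-(d+k)/\alpha}\big)$. The correction $\pderiv{^k}{x_{i_1}\cdots\partial x_{i_k}}(Z\star\Phi)$ is the delicate part, since $\Phi$ is only H\"older in its arguments and one cannot place all $k$ derivatives on it. Instead I would split $\int_0^t=\int_0^{t/2}+\int_{t/2}^t$: on $[t/2,t]$ the derivatives act on the smooth factor $Z(t-s,\cdot,z)$, which is differentiable to the required order thanks to the $q$-fold $x$-differentiability and the $(d+1+(l+q)(\alpha+1))$-fold $p$-differentiability of $w_\mu$ (with $q$ taken large enough to absorb $l$); on $[0,t/2]$ one keeps the derivatives on $K$ and uses that each $x$-derivative of $K$ still gains a positive power of $t$, so the convolution remains integrable and the resulting term is dominated by the leading one. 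Summing the two pieces yields (\ref{eq:localderivativestablelike}) for all $k\leq l$.

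The hard part will be precisely this last step: showing that differentiating the parametrix correction $Z\star\Phi$ up to order $l$ does not worsen the diagonal singularity beyond $\min\big(t|x-y|^{-(d+k+\alpha)},t^{-(d+k)/\alpha}\big)$. This forces one to exploit the full differentiability assumptions on $w_\mu$ in \emph{both} variables, a careful time-split of the convolution, and uniform-in-freezing-parameter versions of (\ref{eq:greensstable})--(\ref{eq:gstablederivative}); the H\"older exponent $\gamma$ enters only through the integrability margin $\gamma/\alpha$ and disappears from the final bound. Everything else reduces to routine convolution estimates against the stable-kernel bounds (\ref{eq:stablegreenineq2}).
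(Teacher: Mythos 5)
The paper does not prove this statement at all: Theorem \ref{thm:localderivativesstablelike} is imported verbatim from the literature (it is Theorem 5.8.3 of \cite{kolokoltsov2019differential}), so there is no in-paper proof to compare against. Your parametrix (Levi) outline is in fact the method by which the cited source establishes the result, so the approach is the right one, and the skeleton --- frozen-coefficient parametrix $Z$, free term $K=(L_x-L_y)Z$ gaining a H\"older/Lipschitz factor $|x-y|^\gamma$ that softens the time singularity to the integrable $t^{-1+\gamma/\alpha}$, convergence of $\Phi=\sum_n K^{\star n}$ via Beta-function/Mittag--Leffler bounds, and then differentiation of $G=Z+Z\star\Phi$ --- is sound.

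Two points in the sketch deserve tightening. First, in the representation $G_{\psi_\alpha,x}=Z+\int_0^t\int Z(t-s,x,z)\Phi(s,z,y)\,\mathrm dz\,\mathrm ds$ the variable $x$ enters \emph{only} through $Z(t-s,x,z)$, so there is no choice about where the $x$-derivatives land and no need to ``keep the derivatives on $K$'' on $[0,t/2]$; the actual mechanism is that $\int_{\R^d}\bigl|\pderiv{^k}{x_{i_1}\cdots\partial x_{i_k}}Z(t-s,x,z)\bigr|\,\mathrm dz\leq C(t-s)^{-k/\alpha}$, and the convolution $\int_0^t(t-s)^{-k/\alpha}s^{-1+\gamma/\alpha}\,\mathrm ds$ converges precisely because $k\leq l<\alpha$ --- this is where the restriction $k\leq l$ in the statement comes from, and your write-up should make that explicit rather than attributing the difficulty to the H\"older regularity of $\Phi$. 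Second, the exponent bookkeeping in your displayed bound for $K$ is not quite consistent (the prefactor $t^{-1+\gamma/\alpha}$ and the modified kernel $\min\bigl(t|x-y|^{-(d+\alpha-\gamma)},t^{-(d-\gamma)/\alpha}\bigr)$ double-count the gain $\gamma$); the standard and correct form is $|K|\leq C\min(1,|x-y|^\gamma)\min\bigl(t|x-y|^{-(d+2\alpha)},t^{-(d+\alpha)/\alpha}\bigr)$, from which the integrable singularity follows. Also note that under the hypotheses of this theorem $w_\mu$ is $q$-times differentiable in $x$, so you may take $\gamma=1$; the H\"older hypothesis belongs to the preceding, unnumbered theorem. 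With these repairs the argument is the standard one and, modulo the routine convolution estimates you defer, complete in outline.
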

Using the same technique as the previous section to extend these small-time estimates to global estimates, we have the following two-sided estimates for $\tau > 0$, $x, y \in \R^d$,
\be\label{eq:globalstableliketwosided}
e^{-C\tau}\min\left(\frac{\tau}{|x-y|^{d+\alpha}},\tau^{-\frac d\alpha}\right) \leq G_{\psi_\alpha, x}(\tau,x, y) \leq e^{C\tau}\min\left(\frac{\tau}{|x-y|^{d+\alpha}}, \tau^{-\frac d\alpha}\right),
\ee
and
\be\label{eq:globalderivativestablelike}\left|\pderiv{^k}{x_{i_1}\cdots\partial x_{i_k}}G_{\psi_\alpha, x}(\tau, x, y)\right| \leq e^{C\tau}\max\left(\tau^{-\frac k \alpha}, 1\right) \min\left(\frac{\tau}{|x-y|^{d+\alpha}},\tau^{-\frac{d}\alpha}\right).
\ee

Now consider the following fractional evolution equation,
\begin{equation}\label{eq:fracevolvar}D^{\beta}_0u(t,x) =-\psi_\alpha(x, -i\nabla)u(t,x), \quad u(0, x) = Y(x),\end{equation}
with
\be\label{eq:defvariablesymbol}
\psi_\alpha(x, p) = |p|^\alpha w_\mu(x,p/|p|),
\ee
where $w_\mu$ satisfies the assumptions of  Theorem \ref{thm:localderivativesstablelike}.
The solution of (\ref{eq:fracevolvar}) is given by
\[u(t, x) = E_{\beta}(\psi_\alpha(x,-i\nabla)t^\beta)Y(x),\]
where $E_\beta$ is the Mittag-Leffler function. The Green's function of Equation (\ref{eq:fracevolvar}) is then
\be\label{eq:stablelikeGreen's} G^{(\beta)}_{\psi_{\alpha}, x}(t, x, y) = \frac{1}{\beta}\int_0^\infty G_{\psi_\alpha, x}(t^\beta z, x, y) z^{-1-\frac 1\beta}w_\beta(z^{-\frac1\beta})~\mathrm dz.\ee

Let $\Omega = |x-y|^\alpha t^{-\beta}$.
\begin{thm}\label{thm:fracevollocal}
Let $\alpha \in (0, 2)$ and $\beta \in (0, 1)$.
Assume that the function $w_\mu$ in (\ref{eq:defvariablesymbol}) is $\gamma$-Hölder continuous in the first variable and $k$-times continuously differentiable in the second variable. Assume further that the spectral measure $\mu$ has a strictly positive density. Then for a fixed $T> 0$ there exists constants $C$ such that for $(t, x, y)\in (0, T] \times \R^d \times \R^d$ the following two-sided estimates for (\ref{eq:stablelikeGreen's}) hold,
\begin{itemize}
\item For $\Omega \leq 1$,
\[
G^{(\beta)}_{\psi_\alpha, x}(t, x, y) \asymp ~C
\left\{
\begin{array}{lc}
t^{-\frac{d\beta}\alpha},& d < \alpha,\\
t^{-\beta}(|\log(\Omega)| + 1),& d=\alpha,\\
t^{-\frac{d\beta}\alpha}\Omega^{1-\frac d\alpha}, & d> \alpha.
\end{array}
\right.\\
\]

\item For $\Omega \geq 1$,
\[G^{(\beta)}_{\psi_\alpha, x}(t, x, y)\asymp~C t^{-\frac{d\beta}\alpha}\Omega^{-1-\frac d\alpha},\]
\end{itemize}
where the constants $C$ depend on $d, \alpha, \beta$ and $T$.
\end{thm}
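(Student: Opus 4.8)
The plan is to mirror exactly the strategy of Theorem~\ref{thm:fracdiffusionlocal}, but now using the polynomial (non-Gaussian) heat kernel bounds for the stable-like operator. First I would insert the globalised two-sided estimate~(\ref{eq:globalstableliketwosided}) for $G_{\psi_\alpha, x}(t^\beta z, x, y)$ into the Pollard--Zolotarev representation~(\ref{eq:stablelikeGreen's}), split the stable density $w_\beta(z^{-1/\beta})$ via~(\ref{prop:asymptoticsstable}) into the behaviour on $(0,1)$ (where it is bounded) and on $(1,\infty)$ (where it is $\asymp z^{-1-1/\beta} f_\beta(z^{-1/\beta})$), and use~(\ref{eq:minOmegasplit}) to resolve the minimum in~(\ref{eq:globalstableliketwosided}) according to whether $z < \Omega$ or $z \geq \Omega$, with $\Omega = |x-y|^\alpha t^{-\beta}$. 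This decomposes $G^{(\beta)}_{\psi_\alpha, x}$ into $I_1 + I_2$, exactly as in~(\ref{eqprf:stabledefi1i2}), except that the exponential factor $e^{\pm C t^\beta z}$ from globalisation now rides along inside $I_2$.

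Next I would handle $I_1$, the integral over $z \in (0,1)$. Since $t \leq T$, the factor $e^{\pm C t^\beta z}$ is bounded above and below by constants depending only on $T$ on this bounded range, so $I_1$ is, up to such constants, identical to the $I_1$ of Theorem~\ref{thm:stableglobal}; hence for $\Omega \leq 1$ it produces the trichotomy $t^{-d\beta/\alpha}$, $t^{-\beta}(|\log\Omega|+1)$, $t^{-d\beta/\alpha}\Omega^{1-d/\alpha}$ according to $d \lessgtr \alpha$, and for $\Omega \geq 1$ it is $\asymp t^{-d\beta/\alpha}\Omega^{-1-d/\alpha}$. For $I_2$ I would treat the upper and lower bounds separately, as in the proof of Theorem~\ref{thm:fracdiffusionlocal}. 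For the upper bound one writes the integrand's exponential as $\exp\{-\Omega z^{-1}\,[z<\Omega] + c t^\beta z - c_\beta z^{1/(1-\beta)}\}$ (after absorbing the $\min$), and uses that for $t \leq T$ the stretched-exponential decay $\exp\{-c_\beta z^{1/(1-\beta)}\}$ dominates the growth $\exp\{c t^\beta z\} \leq \exp\{c T^\beta z\}$ for large $z$, so that $c t^\beta z - c_\beta z^{1/(1-\beta)} \leq -C_{T,\beta} z^{1/(1-\beta)}$; this reduces $I_2$ to exactly the integral estimated in Case~1 and Case~2 of Theorem~\ref{thm:stableglobal} (invoking the Laplace-method bound~(\ref{eq:laplaceboundary}) for $\Omega \geq 1$), giving $I_2 \asymp C_{T,d,\alpha,\beta} t^{-d\beta/\alpha}$ when $\Omega \leq 1$ and $I_2 \asymp C t^{-d\beta/\alpha}\Omega^{-1-d/\alpha}$ when $\Omega \geq 1$. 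For the lower bound one uses $e^{-ct^\beta z} \geq e^{-ct^\beta z^{1/(1-\beta)}}$ for $z > 1$ to again fold the exponential into the stretched-exponential term, obtaining the matching lower bounds. Combining $I_1$ and $I_2$ in each regime yields the claimed estimates.

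The main obstacle is not the structure of the argument — which is essentially a transcription of the two preceding proofs — but making sure the globalisation factor $e^{C\tau}$ is genuinely harmless. The point is that globalisation costs a factor $e^{C\tau}$, and in the Pollard--Zolotarev integral $\tau = t^\beta z$ ranges over all of $(0,\infty)$; the integral only converges because the stable density contributes the super-exponential cutoff $f_\beta(z^{-1/\beta}) \sim \exp\{-c_\beta z^{1/(1-\beta)}\}$, and since $1/(1-\beta) > 1$ this beats the linear-in-$z$ growth $e^{ct^\beta z}$ — but only after we have used $t \leq T$ to bound $t^\beta \leq T^\beta$, which is precisely why the estimate is local in time. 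I would be careful to state the comparison $c T^\beta z \leq \frac{c_\beta}{2} z^{1/(1-\beta)}$ for $z$ large (say $z \geq z_0(T,\beta)$) and note the integral over $1 \leq z \leq z_0$ is trivially bounded by a $T$-dependent constant. The remaining routine point is that, because the estimates are two-sided rather than merely an upper bound, one must verify the lower bound survives globalisation as well; this works because $e^{-ct^\beta z}$ on $z > 1$ dominates $e^{-ct^\beta z^{1/(1-\beta)}}$, so the lower-bound integrand is bounded below by the same type of integral with a slightly larger constant in front of $z^{1/(1-\beta)}$, which is exactly the form Proposition~\ref{prop:asymptoticcomputation} handles.
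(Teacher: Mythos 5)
Your proposal is correct and follows essentially the same route as the paper: split the stable density via (\ref{prop:asymptoticsstable}), use the local bound (\ref{eq:stablelikegreenineq}) on $z\in(0,1)$ (equivalently, the globalised bound with the exponential factor absorbed into a $T$-dependent constant) and the globalised bound (\ref{eq:globalstableliketwosided}) on $z\in(1,\infty)$, resolve the minimum at $z=\Omega$, reuse the $I_1$ computation from Theorem \ref{thm:stableglobal}, and handle the upper and lower bounds of $I_2$ separately by dominating $e^{\pm ct^\beta z}$ with the stretched exponential $e^{-c_\beta z^{1/(1-\beta)}}$ for $t\le T$. Your identification of the globalisation factor as the one point requiring care, and your treatment of it, match the paper's argument.
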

\begin{proof}
We start by estimating the stable density with (\ref{prop:asymptoticsstable}),
\[
G^{(\beta)}_{\psi_\alpha, x} (t, x, y) \asymp C_\beta \int_0^1 G_{\psi_\alpha, x}(t^\beta z, x, y)~ dz + C_\beta \int_1^\infty G_{\psi_\alpha, x}(t^\beta z, x, y) z^{-1-\frac 1\beta} f_\beta(z^{-\frac 1\beta})~\mathrm dz.
\]

In the first integral, we use the estimate (\ref{eq:stablelikegreenineq}), and for the second term we use the global version~(\ref{eq:globalstableliketwosided}) with $\tau = t^\beta z$. Starting with the upper bound, we have
\begin{multline*}
G^{(\beta)}_{\psi_{\alpha}, x}(t, x, y) \leq C_T\int_0^1 \min\left(t^{-\frac{d\beta}\alpha}\Omega^{-1-\frac d\alpha}z, t^{-\frac{d\beta}\alpha}z^{-\frac d\alpha}\right)~\mathrm dz\\
~+ c\int_1^\infty \min\left(t^{-\frac{d\beta}\alpha}\Omega^{-1-\frac d\alpha}z, t^{-\frac{d\beta}\alpha}z^{-\frac d\alpha}\right)z^{-1-\frac 1\beta}e^{ct^\beta z}f_\beta(z^{-\frac 1\beta})~\mathrm dz.
\end{multline*}

Recall that, 
\[\min\left(t^{-\frac{d\beta}\alpha}\Omega^{-1- \frac d\alpha}z, t^{-\frac{d\beta}\alpha} z^{-\frac d\alpha}\right) = \left\{
\begin{array}{lc}
t^{-\frac{d\beta}\alpha}\Omega^{-1-\frac d\alpha}z,&  z \leq \Omega,\\
t^{-\frac{d\beta}\alpha}z^{-\frac d\alpha},& z \geq \Omega.
\end{array}
\right.\]

Then we have
\begin{align}\label{eqprfdef:stablelikeiup}
G^{(\beta)}_{\psi_\alpha, x}(t, x, y) \leq &~c \int_0^1 \min\left(t^{-\frac {d\beta}\alpha} \Omega^{-1-\frac d\alpha}z, t^{-\frac {d\beta}\alpha} z^{-\frac d\alpha}\right) ~\mathrm dz\nonumber\\
&~+ c\int_1^\infty \min\left(t^{-\frac {d\beta}\alpha} \Omega^{-1-\frac d\alpha}z, t^{-\frac {d\beta}\alpha} z^{-\frac d\alpha}\right) z^{-1 - \frac 1\beta}e^{ct^\beta z}f_\beta(z^{-\frac 1\beta})~\mathrm dz\\
:=&~I_1 + I^{up}_2,\nonumber
\end{align}
for the upper bound, and
\begin{align*}
G^{(\beta)}_{\psi_\alpha, x}(t, x, y) \geq &~\frac 1c \int_0^1 \min\left(t^{-\frac {d\beta}\alpha} \Omega^{-1-\frac d\alpha}z, t^{-\frac {d\beta}\alpha} z^{-\frac d\alpha}\right) ~\mathrm dz\\
&~+ C\int_1^\infty \min\left(t^{-\frac {d\beta}\alpha} \Omega^{-1-\frac d\alpha}z, t^{-\frac {d\beta}\alpha} z^{-\frac d\alpha}\right) z^{-1 - \frac 1\beta}e^{-ct^\beta z}f_\beta(z^{-\frac 1\beta})~\mathrm dz\\
:=&~I_1 + I^{lo}_2,
\end{align*}
for the lower bound.
Note that the integral in $I_1$ is the as the one appearing in (\ref{eqprf:stabledefi1i2}) and so, for $t < T$, 
\[I_1 \asymp C_T \left\{ \begin{array}{lc}
t^{-\frac{d\beta}\alpha},& d < \alpha,\\
t^{-\beta}(|\log\Omega|  + 1),& d = \alpha,\\
t^{-\frac{d\beta}\alpha}\Omega^{1-\frac d\alpha}, & d > \alpha,
\end{array}
\right.
\]
for $\Omega \leq 1$, and
\[I_1 \asymp t^{-\frac{d\beta}\alpha} \Omega^{-1-\frac d\alpha},\]
for $\Omega \geq 1$. For the remaining integral $I_2$, we have the usual two cases.

\underline{\bf Case 1: $\Omega \leq 1$.}
In this case we have
\[I^{up}_2 = C t^{-\frac{d\beta}\alpha}\int_1^\infty z^{-\frac d\alpha - 1 + \frac 1{2(1-\beta)}}\exp\{ct^\beta z - c_\beta z^{\frac 1{1-\beta}}\}~\mathrm dz.\]

This integral converges as long as $t < T$, since $\exp\{ct^\beta z\} \leq \exp\{C z^{\frac 1{1-\beta}}\}$ for sufficiently large $z$.~Thus
\[I^{up}_2 \leq C_{T, d, \beta, \alpha} t^{-\frac{d\beta}\alpha}.\]

On the other hand, we have
\[I^{lo}_2 = Ct^{-\frac{d\beta}\alpha}\int_1^\infty z^{-\frac d\alpha - 1 + \frac 1{2(1-\beta)}}\exp\{-ct^\beta z - c_\beta z^{\frac 1{1-\beta}}\}~\mathrm dz,\]
which is strictly positive for $t < T$, thus
\[I^{lo}_2 \geq C_{T, d, \beta, \alpha} t^{-\frac{d\beta}\alpha}.\]

Combining these estimates with those for $I_1$ gives the estimates for $G^{(\beta)}_{\psi_\alpha, x}$ for $\Omega \leq 1$.

\underline{\bf Case 2: $\Omega \geq 1$.}
In this case we have
\begin{multline*}
I^{up}_2 = C t^{-\frac{d\beta}\alpha}\Omega^{-1-\frac d\alpha}\int_1^\Omega z^{\frac 1{2(1-\beta)}} \exp\{ct^\beta z - z^{\frac 1{1-\beta}}\}~\mathrm dz \\
+ C t^{-\frac{d\beta}\alpha}\int_\Omega^\infty z^{-\frac d\alpha - 1 + \frac 1{2(1-\beta)}}\exp\{ct^\beta z- c_\beta z^{\frac 1{1-\beta}}\}~\mathrm dz, 
\end{multline*}
and
\begin{multline*}
I^{lo}_2 = C t^{-\frac{d\beta}\alpha}\Omega^{-1-\frac d\alpha}\int_1^\Omega z^{\frac 1{2(1-\beta)}} \exp\{-ct^\beta z - z^{\frac 1{1-\beta}}\}~\mathrm dz \\
+ C t^{-\frac{d\beta}\alpha}\int_\Omega^\infty z^{-\frac d\alpha - 1 + \frac 1{2(1-\beta)}}\exp\{-ct^\beta z- c_\beta z^{\frac 1{1-\beta}}\}~\mathrm dz.
\end{multline*}

Firstly we have,
\[t^{-\frac{d\beta}\alpha}\Omega^{-1-\frac d\alpha}\int_1^\Omega z^{\frac 1{2(1-\beta)}} \exp\{ct^\beta z - c_\beta z^{\frac 1{1-\beta}}\}~\mathrm dz \leq C_{T, d, \beta, \alpha} t^{-\frac{d\beta}\alpha}\Omega^{-1-\frac d\alpha}\]
and
\[t^{-\frac{d\beta}\alpha}\Omega^{-1-\frac d\alpha}\int_1^\Omega z^{\frac 1{2(1-\beta)}} \exp\{-ct^\beta z - c_\beta z^{\frac 1{1-\beta}}\}~\mathrm dz \geq \tilde{C}_{T, d, \beta, \alpha} t^{-\frac{d\beta}\alpha}\Omega^{-1-\frac d\alpha}.\]

Next note that $\exp\{t^\beta z\} \leq \exp\{T^\beta z\}$ and $\exp\{-t^\beta z\} \geq \exp\{-T^\beta z\}$ for $t < T$. Thus,
\begin{align}\label{eqprf:stbllikei2uplrge}
I^{up}_2 \leq &~C t^{-\frac{d\beta}\alpha} \Omega^{-1-\frac d\alpha} + C t^{-\frac{d\beta}\alpha} \int_\Omega^\infty z^{-\frac d\alpha -1+\frac 1{2(1-\beta)}} \exp\{t^\beta z - c_\beta z^{\frac 1{1-\beta}}\}~\mathrm dz \nonumber\\
\leq &~Ct^{-\frac{d\beta}\alpha}\Omega^{-1-\frac d\alpha} +C t^{-\frac{d\beta}\alpha}\int_\Omega^\infty z^{-\frac d\alpha - 1 + \frac 1{2(1-\beta)}}\exp\{-C_{T, \beta}z^{\frac 1{1-\beta}}\}~\mathrm dz\nonumber\\
\leq & ~Ct^{-\frac{d\beta}\alpha}\Omega^{-1-\frac d\alpha}  + Ct^{-\frac{d\beta}\alpha}\Omega^{-\frac d\alpha - \frac 1{2(1-\beta)}}\exp\{-C_{T, \beta} \Omega^{\frac 1{1-\beta}}\}\nonumber\\
\leq & ~C_{T, \beta, \alpha, d}t^{-\frac{d\beta}\alpha}\Omega^{-1-\frac d\alpha}
\end{align}
and
\begin{align*}
I^{lo}_2 \geq &~C t^{-\frac{d\beta}\alpha} \Omega^{-1-\frac d\alpha} + C t^{-\frac{d\beta}\alpha} \int_\Omega^\infty z^{-\frac d\alpha -1+\frac 1{2(1-\beta)}} \exp\{-t^\beta z - c_\beta z^{\frac 1{1-\beta}}\}~\mathrm dz \\
\geq &~Ct^{-\frac{d\beta}\alpha}\Omega^{-1-\frac d\alpha} +C t^{-\frac{d\beta}\alpha}\int_\Omega^\infty z^{-\frac d\alpha - 1 + \frac 1{2(1-\beta)}}\exp\{-C_{T, \beta}z^{\frac 1{1-\beta}}\}~\mathrm dz\\
\geq &~Ct^{-\frac{d\beta}\alpha}\Omega^{-1-\frac d\alpha}  + Ct^{-\frac{d\beta}\alpha}\Omega^{-\frac d\alpha - \frac 1{2(1-\beta)}}\exp\{-C_{T, \beta} \Omega^{\frac 1{1-\beta}}\}\\
\geq & ~C_{T, \beta, \alpha, d}t^{-\frac{d\beta}\alpha}\Omega^{-1-\frac d\alpha}
\end{align*}
Thus for $\Omega \geq 1$, we have
\[G^{(\beta)}(t, x, y) \asymp C_{T, d, \beta, \alpha} t^{-\frac{d\beta}\alpha} \Omega^{-1-\frac d\alpha},\]
as claimed.\end{proof}

Next we look at the spatial derivatives, where we consider separately small and large (but finite)~time.

\begin{prop}
Under the same assumptions as  Theorem \ref{thm:fracevollocal} and  Theorem \ref{thm:localderivativesstablelike}, the spatial derivatives of the Green's function $G^{(\beta)}_{\psi_\alpha, x}(t, x, y)$ for $(t,x,y)\in (0, 1)\times \R^d \times \R^d$ satisfy,
\begin{itemize}
\item For $\Omega \leq 1$,
\be\label{eqthm:stbllikederiv1}
\left|\pderiv{^k}{x_{i_1}\cdots\partial x_{i_k}}G_{\psi_\alpha, x}^{(\beta)}(t, x, y)\right|\leq ~C
\left\{
\begin{array}{lc}
t^{-\frac{(d+k)\beta}\alpha}, & d + k < \alpha,\\
t^{-\beta}(|\log \Omega | + 1),& d + k = \alpha,\\
t^{-\frac{(d+k)\beta}\alpha} \Omega^{1-\frac{d+k}\alpha},& d + k > \alpha.
\end{array}
\right.
\ee
for all $k\leq l$ and all indicies $i_i, \cdots, i_k$.
\item For $1 \leq \Omega \leq t^{-\beta}$,
\be\label{eqthm:stbllikederiv2}
\left|\pderiv{^k}{x_{i_1}\cdots\partial x_{i_k}}G_{\psi_\alpha, x}^{(\beta)}(t, x, y)\right|\leq
C t^{-\frac{(d+k)\beta}\alpha}\Omega^{-1-\frac{(d+k)}\alpha}
\ee
for all $k\leq l$ and all indicies $i_i, \cdots, i_k$.
\item For $\Omega \geq t^{-\beta}$,
\be\label{eqthm:stbllikederiv3}
\left|\pderiv{^k}{x_{i_1}\cdots\partial x_{i_k}}G_{\psi_\alpha, x}^{(\beta)}(t, x, y)\right|\leq
C t^{-\frac{d\beta}\alpha}\Omega^{-1-\frac{d}\alpha}
\ee
for all $k\leq l$ and all indicies $i_i, \cdots, i_k$.
\end{itemize}
\end{prop}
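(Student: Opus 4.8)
The plan is to repeat, in the stable-like (polynomial) setting, the argument already used for the small-time derivative estimates following Theorem~\ref{thm:fracdiffusionlocal}, substituting the kernel bounds \eqref{eq:localderivativestablelike}--\eqref{eq:globalderivativestablelike} for the Gaussian ones and noting that only upper bounds are needed. First I would differentiate under the integral in the Pollard--Zolotarev representation~\eqref{eq:stablelikeGreen's}---justified by dominated convergence using the super-exponential tail of $z^{-1-1/\beta}w_\beta(z^{-1/\beta})$ as $z\to\infty$---apply the triangle inequality, and split the stable density via \eqref{prop:asymptoticsstable} exactly as in the proofs of Theorems~\ref{thm:stableglobal} and~\ref{thm:fracevollocal}, so that $|\partial^k_x G^{(\beta)}_{\psi_\alpha,x}(t,x,y)|$ is controlled by $C_\beta$ times $\int_0^1|\partial^k_x G_{\psi_\alpha,x}(t^\beta z,x,y)|\,\mathrm dz$ plus $\int_1^\infty|\partial^k_x G_{\psi_\alpha,x}(t^\beta z,x,y)|\,z^{-1+\frac1{2(1-\beta)}}\exp\{-c_\beta z^{\frac1{1-\beta}}\}\,\mathrm dz$. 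Since $t<1$, I would split the second integral further at $z=t^{-\beta}>1$, producing three pieces $I_1,I_2,I_3$ over $(0,1)$, $(1,t^{-\beta})$, $(t^{-\beta},\infty)$; the point of the cut at $t^{-\beta}$ is that on the first two ranges the argument $\tau=t^\beta z$ stays below $1$, while on the third $\tau\ge1$.

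On $(0,1)$ and $(1,t^{-\beta})$ I would insert the local derivative bound~\eqref{eq:localderivativestablelike} with $d$ replaced by $d+k$; writing $\Omega=|x-y|^\alpha t^{-\beta}$ the kernel factor becomes $t^{-(d+k)\beta/\alpha}\min\big(\Omega^{-1-(d+k)/\alpha}z,\ z^{-(d+k)/\alpha}\big)$, so $I_1$ is literally the integral ``$I_1$'' of the derivative proposition following Theorem~\ref{thm:stableglobal} with $d\mapsto d+k$ (hence equal to the three-case bound of \eqref{eqthm:stbllikederiv1} for $\Omega\le1$ and to $\tfrac c2 t^{-(d+k)\beta/\alpha}\Omega^{-1-(d+k)/\alpha}$ for $\Omega\ge1$), and $I_2$ is the corresponding tail integral, which converges because of the $\exp\{-c_\beta z^{1/(1-\beta)}\}$ weight; splitting the minimum at $z=\Omega$ and using the Laplace method~\eqref{eq:laplaceboundary} and Proposition~\ref{prop:asymptoticcomputation} gives $I_2\le C t^{-(d+k)\beta/\alpha}$ when $\Omega\le1$ and $I_2\le C t^{-(d+k)\beta/\alpha}\Omega^{-1-(d+k)/\alpha}$ when $\Omega\ge1$ (the intermediate stretched-exponential tail $\Omega^{(\cdots)}\exp\{-c'_\beta\Omega^{1/(1-\beta)}\}$ being absorbed into the polynomial bound). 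On $(t^{-\beta},\infty)$ we have $\tau=t^\beta z\ge1$, so $\max(\tau^{-k/\alpha},1)=1$ and the global derivative estimate~\eqref{eq:globalderivativestablelike} applies, now carrying the exponent $d$ (not $d+k$) together with a factor $e^{ct^\beta z}$; since $t<1$ and $1/(1-\beta)>1$ one has $ct^\beta z-c_\beta z^{1/(1-\beta)}\le C-c'_\beta z^{1/(1-\beta)}$ on $z\ge1$, so that factor is harmless, and the same min-splitting plus Laplace estimate yields $I_3\le C t^{-d\beta/\alpha}\Omega^{-1-d/\alpha}$ for $\Omega\ge1$ and a bounded contribution (a $t$-power times $\exp\{-c'_\beta t^{-\beta/(1-\beta)}\}$) for $\Omega\le1$.

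Finally I would assemble the pieces. For $\Omega\le1$, $I_1$ dominates and gives \eqref{eqthm:stbllikederiv1}, with $I_2,I_3$ of lower order. For $\Omega\ge1$ the elementary comparison $t^{-(d+k)\beta/\alpha}\Omega^{-1-(d+k)/\alpha}\le t^{-d\beta/\alpha}\Omega^{-1-d/\alpha}$ holds precisely when $\Omega\ge t^{-\beta}$; hence on the band $1\le\Omega\le t^{-\beta}$ the $(d+k)$-terms $I_1,I_2$ dominate and give \eqref{eqthm:stbllikederiv2}, while for $\Omega\ge t^{-\beta}$ the term $I_3$ dominates (and $I_1,I_2\le C t^{-d\beta/\alpha}\Omega^{-1-d/\alpha}$ by the same comparison), giving \eqref{eqthm:stbllikederiv3}. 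The only genuine obstacle is this bookkeeping: reconciling the exponent $d+k$ available from the small-time derivative estimate with the exponent $d$ (and the extra $\max(\tau^{-k/\alpha},1)$ prefactor) in its global continuation is exactly what forces the intermediate regime $1\le\Omega\le t^{-\beta}$ and the three-way split of the $z$-integral; the analytic inputs---the local-to-global trick~\eqref{eq:tricklocal-to-global}, the asymptotics of $w_\beta$, and the Laplace-type estimates---have all already been deployed in Sections~3 and~4.
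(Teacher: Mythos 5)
Your proposal is correct and follows essentially the same route as the paper: split the stable density via \eqref{prop:asymptoticsstable}, insert the local bound \eqref{eq:localderivativestablelike} and the globalised bound \eqref{eq:globalderivativestablelike}, cut the $z$-integral at $1$, $\Omega$ and $t^{-\beta}$, and resolve the competition between the $(d+k)$- and $d$-exponents exactly at the threshold $\Omega = t^{-\beta}$. The only cosmetic difference is that you invoke the local estimate directly on $(1,t^{-\beta})$ where $t^\beta z<1$, whereas the paper keeps the global estimate there and lets the prefactor $\max((t^\beta z)^{-k/\alpha},1)$ reproduce the same $(d+k)$-exponent; the resulting integrals and conclusions are identical.
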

\begin{proof}
Splitting up the stable density followed by using the estimates (\ref{eq:localderivativestablelike}) and (\ref{eq:globalderivativestablelike}) we have
\begin{align*}
\left|\pderiv{^k}{x_{i_1}\cdots\partial x_{i_k}}G_{\psi_\alpha, x}^{(\beta)}(t, x, y)\right| \leq &~ c_\beta \int_0^1 \left|\pderiv{^k}{x_{i_1}\cdots\partial x_{i_k}}G_{\psi_\alpha, x}(t^\beta z, x, y)\right|~\mathrm dz\\
&+ c_\beta \int_1^\infty \left|\pderiv{^k}{x_{i_1}\cdots\partial x_{i_k}}G_{\psi_\alpha, x}^{(\beta)}(t^\beta z, x, y)\right| z^{-1 -\frac 1\beta} f_\beta(z^{-\frac 1\beta})~\mathrm dz\\
=:&~I_1 + I_2,
\end{align*}
where
\[I_1 := C \int_0^1 \min\left(t^{-\frac{(d+k)\beta}\alpha}\Omega^{-1-\frac{d+k}\alpha}z, t^{-\frac{(d+k)\beta}\alpha} z^{-\frac{d+k}\alpha}\right)~\mathrm dz,\]
and
\[I_2 := C \int_1^\infty \max((t^\beta z)^{-\frac k\alpha}, 1)\min\left(t^{-\frac{d\beta}\alpha}\Omega^{-1-\frac{d}\alpha}z, t^{-\frac{d\beta}\alpha} z^{-\frac{d}\alpha}\right)z^{-1-\frac 1\beta}e^{ct^\beta z}f_\beta(z^{-\frac 1\beta})~\mathrm dz.\]

Now note that since $t \in (0, 1)$, the integral in $I_1$ is the same as that one appearing in (\ref{eqprfdef:stablederivi1}), and thus for $\Omega \leq 1$,
\begin{align*}I_1 = &~ C t^{-\frac{(d+k)\beta}\alpha}\Omega^{-1-\frac{d+k}\alpha}\int_0^\Omega z~\mathrm dz+Ct^{-\frac{(d+k)\beta}\alpha}\int_\Omega^1 z^{-\frac{d+k}\alpha}~\mathrm dz\\
\leq &~C
\left\{
\begin{array}{lc}
t^{-\frac{(d+k)\beta}\alpha}, & d + k < \alpha,\\
t^{-\beta}(|\log \Omega | + 1),& d + k = \alpha,\\
t^{-\frac{(d+k)\beta}\alpha} \Omega^{1-\frac{d+k}\alpha},& d + k > \alpha.
\end{array}
\right.
\end{align*}

For $\Omega \geq 1$,
\be\label{eqprfdef:stbllikederivsmall}
I_1 = Ct^{-\frac{(d+k)\beta}\alpha}\Omega^{-1-\frac{d+k}\alpha}\int_0^1 z~\mathrm dz = \frac C2 t^{-\frac{(d+k)\beta}\alpha}\Omega^{-1-\frac{(d+k)\beta}\alpha}.
\ee

Turning to $I_2$, we need to consider some different cases.

\underline{\bf Case 1: $\Omega \leq 1$.}
In this case
\begin{align*}
I_2 = &~ C t^{-\frac{(d+k)\beta}\alpha}\int_1^{t^{-\beta}} z^{-\frac{d+k}\alpha - 1 +\frac 1{2(1-\beta)}}\exp\{ct^\beta z -c_\beta z^{\frac 1{1-\beta}}\}~\mathrm dz\\
&~+C t^{-\frac{d\beta}\alpha}\int_{t^{-\beta}}^\infty z^{-\frac d\alpha -1 + \frac 1{2(1-\beta)}}\exp\{ct^\beta z - c_\beta z^{\frac 1{1-\beta}}\}~\mathrm dz\\ 
\leq&~C t^{-\frac{(d+k)\beta}\alpha} \int_1^\infty z^{-\frac{d + k}\alpha - 1 + \frac 1{2(1-\beta)}}\exp\{-C_\beta z^{\frac 1{1-\beta}}\}~\mathrm dz\\
 &~+ C t^{\beta - \frac \beta{2(1-\beta)}}\exp\{-Ct^{-\frac{\beta}{1-\beta}}\}\\
\leq &~ C_{\beta, d, \alpha, k} t^{-\frac{(d+k)\beta}\alpha}.
\end{align*}

Combining this with the estimate for $I_1$ shows (\ref{eqthm:stbllikederiv1}).

\underline{\bf Case 2: $1 \leq \Omega \leq t^{-\beta}$.}
In this case we have
\begin{align*}
I_2 =&~ C t^{-\frac{(d+k)\beta}\alpha}\Omega^{-1-\frac{d+k}\alpha}\int_1^{\Omega}z^{\frac 1{2(1-\beta)}}\exp\{ct^\beta z - c_\beta z^{\frac 1{1-\beta}}\}~\mathrm dz\\
&~+C t^{-\frac{(d+k)\beta}\alpha}\int_{\Omega}^{t^{-\beta}} z^{-\frac{d+k}\alpha -1+\frac 1{2(1-\beta)}}\exp\{ct^\beta z - c_\beta z^{\frac 1{1-\beta}}\}~\mathrm dz\\
&~+C t^{-\frac{d\beta}\alpha}\int_{t^{-\beta}}^\infty z^{-\frac d\alpha - 1 + \frac 1{2(1-\beta)}}\exp\{ct^\beta z - c_\beta z^{\frac 1{1-\beta}}\}~\mathrm dz\\
\leq &~C t^{-\frac{(d+k)\beta}\alpha}\Omega^{-1-\frac{d+k}\alpha}\int_1^\infty z^{\frac 1{2(1-\beta)}}\exp\{ct^\beta z - c_\beta z^{\frac 1{1-\beta}}\}~\mathrm dz\\
&~+ C t^{-\frac{(d+k)\beta}\alpha}\int_\Omega^\infty z^{-\frac{d+k}\alpha -1 + \frac 1{2(1-\beta)}}\exp\{ct^\beta z - c_\beta z^{\frac 1{1-\beta}}\}~\mathrm dz\\
&~+ C t^{\beta - \frac 1{2(1-\beta)}}\exp\{-C t^{-\frac{\beta}{1-\beta}}\}\\
\leq &~C_\beta t^{-\frac{(d+k)\beta}\alpha}\Omega^{-1-\frac{d+k}\alpha}\\
&~+ C_{d, \alpha, \beta, l} t^{-\frac{(d+k)\beta}\alpha} \Omega^{-\frac{d+k}\alpha - \frac 1{2(1-\beta)}}\exp\{-C \Omega^{\frac 1{1-\beta}}\}\\
&~+ C t^{\beta - \frac 1{2(1-\beta)}}\exp\{-C t^{-\frac{\beta}{1-\beta}}\}\\
\leq &~C_{\beta, d, \alpha, k} t^{-\frac{(d+k)\beta}\alpha} \Omega^{-1-\frac{d+k}\alpha}.
\end{align*}

Combining this with (\ref{eqprfdef:stbllikederivsmall}) shows (\ref{eqthm:stbllikederiv2}).

\underline{\bf Case 3: $t^{-\beta} \leq \Omega$.}
\begin{align*}
I_2 =&~C t^{-\frac{(d+k)\beta}\alpha}\Omega^{-1-\frac{d+k}\alpha}\int_1^{t^{-\beta}}z^{\frac 1{2(1-\beta)}}\exp\{ct^\beta z - c_\beta z^{\frac 1{1-\beta}}\}~\mathrm dz\\
& +~C t^{-\frac{d\beta}\alpha}\Omega^{-1-\frac d\alpha}\int_{t^{-\beta}}^\Omega z^{\frac 1{2(1-\beta)}}\exp\{ ct^\beta z - c_\beta z^{\frac 1{1-\beta}}\}~\mathrm dz\\
& +~ C t^{-\frac{d\beta}\alpha} \int_\Omega^\infty z^{-\frac d\alpha - 1 +\frac 1{2(1-\beta)}}\exp\{ct^\beta z- c_\beta z^{\frac 1{1-\beta}}\}~\mathrm dz\\
\leq &~ C_{\beta} t^{-\frac{(d+k)\beta}\alpha}\Omega^{-1-\frac{d+k}\alpha}\\
&+~C_\beta t^{-\frac{d\beta}\alpha}\Omega^{-1-\frac d\alpha}\\
&+~C_{d, \beta, \alpha, l} t^{-\frac{d\beta}\alpha} \Omega^{-\frac d\alpha - \frac 1{2(1-\beta)}}\exp\{-C\Omega^{\frac 1{1-\beta}}\}\\
\leq &~ C_{d, \beta, \alpha, k} t^{-\frac{d\beta}\alpha}\Omega^{-1-\frac d\alpha}.
\end{align*}

Finally combining this with (\ref{eqprfdef:stbllikederivsmall}) shows (\ref{eqthm:stbllikederiv3}).
\end{proof}
Next, for large (finite) time.
\begin{prop}
Under the same assumptions as Theorem \ref{thm:fracevollocal} and  Theorem \ref{thm:localderivativesstablelike}, then for fixed $T> 0$, the following estimates hold for the spatial derivatives of the Green's function $G^{(\beta)}_{\psi_\alpha, x}(t, x, y)$ for $(t,x,y)\in (1, T)\times \R^d \times \R^d$,
\begin{itemize}
\item For $\Omega\leq 1$,
\be\label{eqthm:sbllikederivlrge1}
\left|\pderiv{^k}{x_{i_1}\cdots\partial x_{i_k}}G_{\psi_\alpha, x}^{(\beta)}(t, x, y)\right|\leq ~C_{T, d, \beta, \alpha, k}
\left\{
\begin{array}{lc}
1, & d + k < \alpha,\\
t^{-\beta}(|\log \Omega | + 1),& d + k = \alpha,\\
|x-y|^{\alpha - d- k},&d + k > \alpha,
\end{array}
\right.\\
\ee
for all $k\leq l$ and all indicies $i_i, \cdots, i_k$.

\item For $\Omega \geq 1$,
\be\label{eqthm:sbllikederivlrge3}
\left|\pderiv{^k}{x_{i_1}\cdots\partial x_{i_k}}G_{\psi_\alpha, x}^{(\beta)}(t, x, y)\right|\leq C |x-y|^{-\alpha - d},
\ee
for all $k\leq l$ and all indicies $i_i, \cdots, i_k$.
\end{itemize}
\end{prop}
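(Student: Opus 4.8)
The plan is to run the same scheme used in the two preceding propositions. I would differentiate under the integral sign in (\ref{eq:stablelikeGreen's}), apply the triangle inequality, split the $z$‑integral at $z=1$, and insert the asymptotics (\ref{prop:asymptoticsstable}) of the stable density together with the derivative estimates (\ref{eq:localderivativestablelike}) and (\ref{eq:globalderivativestablelike}). Writing $\Omega=|x-y|^\alpha t^{-\beta}$ this gives
\[
\left|\pderiv{^k}{x_{i_1}\cdots\partial x_{i_k}}G_{\psi_\alpha, x}^{(\beta)}(t, x, y)\right|\leq I_1+I_2,
\]
with
\[
I_1:=C\int_0^1 \min\!\left(t^{-\frac{(d+k)\beta}\alpha}\Omega^{-1-\frac{d+k}\alpha}z,\ t^{-\frac{(d+k)\beta}\alpha}z^{-\frac{d+k}\alpha}\right)\mathrm dz,
\]
\[
I_2:=C\int_1^\infty \min\!\left(t^{-\frac{d\beta}\alpha}\Omega^{-1-\frac d\alpha}z,\ t^{-\frac{d\beta}\alpha}z^{-\frac d\alpha}\right) z^{-1-\frac1\beta}e^{ct^\beta z}f_\beta(z^{-\frac1\beta})\,\mathrm dz.
\]
For $I_1$ one uses (\ref{eq:localderivativestablelike}), which applies because $t^\beta z<T^\beta$ is a fixed horizon; for $I_2$ one uses the globalised estimate (\ref{eq:globalderivativestablelike}) with $\tau=t^\beta z\ge t^\beta>1$, so that the prefactor $\max(\tau^{-k/\alpha},1)$ is just $1$. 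The integral $I_1$ is, after the substitution $d\mapsto d+k$, exactly the one already computed in (\ref{eqprf:stabledefi1i2}): for $\Omega\le1$ it is comparable to $t^{-(d+k)\beta/\alpha}$, $t^{-\beta}(|\log\Omega|+1)$ or $t^{-(d+k)\beta/\alpha}\Omega^{1-(d+k)/\alpha}$ according as $d+k<\alpha$, $=\alpha$ or $>\alpha$, while for $\Omega\ge1$ it equals $\frac12 t^{-(d+k)\beta/\alpha}\Omega^{-1-(d+k)/\alpha}$.

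Next I would estimate $I_2$. The key point, already used in the proof of Theorem \ref{thm:fracevollocal}, is that for $t<T$ the stretched‑exponential decay $\exp\{-c_\beta z^{1/(1-\beta)}\}$ contained in $f_\beta(z^{-1/\beta})$ beats the exponential growth $e^{ct^\beta z}$, since $1/(1-\beta)>1$; thus $z^{-1-1/\beta}e^{ct^\beta z}f_\beta(z^{-1/\beta})$ is bounded by $\exp\{-C_{T,\beta}z^{1/(1-\beta)}\}$ times a power of $z$, and is integrable on $(1,\infty)$ together with all of its moments. For $\Omega\le1$ the minimum in $I_2$ equals $t^{-d\beta/\alpha}z^{-d/\alpha}$ on the whole range, so $I_2\le C_{T,d,\alpha,\beta}t^{-d\beta/\alpha}$. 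For $\Omega\ge1$ I would split the integral at $z=\Omega$: on $(1,\Omega)$ the integral of $z^{1-1/\beta}e^{ct^\beta z}f_\beta(z^{-1/\beta})$ is dominated by a convergent integral, contributing $\le C_{T,d,\alpha,\beta}t^{-d\beta/\alpha}\Omega^{-1-d/\alpha}$, while on $(\Omega,\infty)$ a boundary Laplace estimate of the type (\ref{eq:laplaceboundary}) gives a contribution $\le Ct^{-d\beta/\alpha}\Omega^{-d/\alpha-1/(2(1-\beta))}\exp\{-C\Omega^{1/(1-\beta)}\}$, which is absorbed into $Ct^{-d\beta/\alpha}\Omega^{-1-d/\alpha}$. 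Hence $I_2\le C_{T,d,\alpha,\beta}\,t^{-d\beta/\alpha}\Omega^{-1-d/\alpha}$ for $\Omega\ge1$.

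It then remains to convert these bounds into the stated form, using that $t\in(1,T)$ forces $t^{-\beta}\in(T^{-\beta},1)$. Since $t^{-(d+k)\beta/\alpha}\Omega^{1-(d+k)/\alpha}=t^{-\beta}|x-y|^{\alpha-d-k}$ and $t^{-\beta}\le1$, the three $\Omega\le1$ contributions of $I_1$ are bounded by $1$, $t^{-\beta}(|\log\Omega|+1)$ and $|x-y|^{\alpha-d-k}$ respectively; the constant bound for $I_2$ is absorbed into each of these (in the last case one uses that $\Omega\le1$ forces $|x-y|\le T^{\beta/\alpha}$), which yields (\ref{eqthm:sbllikederivlrge1}). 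For $\Omega\ge1$ one has $t^{-a\beta/\alpha}\Omega^{-1-a/\alpha}=t^\beta|x-y|^{-\alpha-a}$ for any $a$, and since $\Omega\ge1$ forces $|x-y|\ge1$, both $I_1$ (with $a=d+k$) and $I_2$ (with $a=d$) are bounded by $C_T|x-y|^{-\alpha-d}$, giving (\ref{eqthm:sbllikederivlrge3}). The main obstacle is the analysis of $I_2$ when $\Omega\ge1$: extracting the genuine polynomial decay $\Omega^{-1-d/\alpha}$ forces one to separate the part of the integral near $z=\Omega$ from the tail and to verify that the growth $e^{ct^\beta z}$ is harmless over all of $(1,\infty)$ once $t$ is confined to a bounded interval; the rest is the bookkeeping already carried out in the earlier proofs.
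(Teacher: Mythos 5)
Your proposal is correct and follows essentially the same route as the paper: the same split of the $z$-integral at $1$, the same use of the local derivative estimate (\ref{eq:localderivativestablelike}) on $(0,1)$ and the globalised estimate (\ref{eq:globalderivativestablelike}) (with the prefactor $\max(\tau^{-k/\alpha},1)=1$ since $\tau=t^\beta z>1$) on $(1,\infty)$, and the same reduction of $I_1$ and $I_2$ to the integrals already computed in (\ref{eqprfdef:stablederivi1}) and (\ref{eqprfdef:stablelikeiup})--(\ref{eqprf:stbllikei2uplrge}). The final conversion to the stated $|x-y|$-dependent bounds via $t\in(1,T)$ also matches the paper's argument.
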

\begin{proof}
As usual we first use \ref{eq:stabledensityineq},
\begin{align*}
\left|\pderiv{^k}{x_{i_1}\cdots\partial x_{i_k}}G_{\psi_\alpha, x}^{(\beta)}(t, x, y)\right| \leq & c_\beta \int_0^1 \left|\pderiv{^k}{x_{i_1}\cdots\partial x_{i_k}}G_{\psi_\alpha, x}(t^\beta z, x, y)\right|~\mathrm dz\\
&+ c_\beta \int_1^\infty \left|\pderiv{^k}{x_{i_1}\cdots\partial x_{i_k}}G_{\psi_\alpha, x}(t^\beta z, x, y)\right| z^{-1 -\frac 1\beta} f_\beta(z^{-\frac 1\beta})~\mathrm dz
\end{align*}

Next, we use the estimate (\ref{eq:localderivativestablelike}) for the first term and (\ref{eq:globalderivativestablelike}) for the second. Note that since \mbox{$t \in (1, T)$,~then}
\begin{align*}
\left|\pderiv{^k}{x_{i_1}\cdots\partial x_{i_k}}G_{\psi_\alpha, x}^{(\beta)}(t, x, y)\right| \leq & ~c \int_0^1 \min\left((t^\beta z)^{-\frac{d+k}\alpha},t^{-\frac{(d+k)\beta}\alpha}\Omega^{-1-\frac{d+k}\alpha}z \right)~\mathrm dz\\
& + c\int_1^\infty \min\left((t^\beta z)^{-\frac d\alpha}, t^{-\frac {d\beta}\alpha}\Omega^{-1-\frac d\alpha}z\right) z^{-1 -\frac 1\beta}e^{ct^\beta z}f_\beta(z^{-\frac 1\beta})~\mathrm dz\\
:=&~ I_1 + I_2
\end{align*}

The integral in $I_1$ is the same as that one appearing in (\ref{eqprfdef:stablederivi1}), and thus for $\Omega \leq 1$,
\begin{align*}I_1 = &~ C t^{-\frac{(d+1)\beta}\alpha}\Omega^{-1-\frac{d+1}\alpha}\int_0^\Omega z~\mathrm dz+Ct^{-\frac{(d+1)\beta}\alpha}\int_\Omega^1 z^{-\frac{d+1}\alpha}~\mathrm dz\\
\leq &~C
\left\{
\begin{array}{lc}
t^{-\frac{(d+k)\beta}\alpha}, & d + k < \alpha,\\
t^{-\beta}(|\log \Omega | + 1),& d + k = \alpha,\\
t^{-\frac{(d+k)\beta}\alpha} \Omega^{1-\frac{d+k}\alpha},& d + k > \alpha.
\end{array}
\right.
\end{align*}

However in this situation $t\in (1, T)$, so $t$ is away from both $0$ and $\infty$. Thus, recalling that $\Omega = |x-y|^\alpha t^{-\beta}$,
\[I_1 \leq C_{T,d, \beta, k,\alpha}
\left\{
\begin{array}{lc}
1, & d + k < \alpha,\\
t^{-\beta}(|\log \Omega | + 1),& d + k = \alpha,\\
|x-y|^{\alpha-d-k},& d + k > \alpha.
\end{array}
\right.
\]

For $\Omega \geq 1$,
\[I_1 = Ct^{-\frac{(d+1)\beta}\alpha}\Omega^{-1-\frac{d+k}\alpha}\int_0^1 z~\mathrm dz = \frac C2 t^{-\frac{(d+k)\beta}\alpha}\Omega^{-1-\frac{d+k}\alpha}\leq C_{T, \beta, d, \alpha, k} |x-y|^{-\alpha - d- k}.\]

Furthermore, the integral $I_2$ is the same as the one defined as $I^{up}_2$ in (\ref{eqprfdef:stablelikeiup}), thus for $\Omega \leq 1$,
\[I_2 \leq C t^{-\frac{d\beta}\alpha} \leq C_{d, \beta, \alpha, T} \]

So for $\Omega \leq 1$,
\[\left|\pderiv{^k}{x_{i_1}\cdots\partial x_{i_k}}G_{\psi_\alpha, x}^{(\beta)}(t, x, y)\right| \leq I_1 + I_2 \leq C_{T, d, \beta, \alpha, k} \left\{
\begin{array}{lc}
1, & d + k < \alpha,\\
t^{-\beta}(|\log \Omega | + 1),& d + k = \alpha,\\
|x-y|^{\alpha - d - k}, & d + k > \alpha,
\end{array}
\right.
\]
which thus gives (\ref{eqthm:sbllikederivlrge1}). Finally for $\Omega \geq 1$, using (\ref{eqprf:stbllikei2uplrge}),
\[I_2 \leq Ct^{-\frac{d\beta}\alpha}\Omega^{-1-\frac d\alpha}\leq C_{T, d, \beta, \alpha} |x-y|^{-\alpha - d}.\]
thus combining the estimates for $I_1$ and $I_2$ for $\Omega \geq 1$ gives us (\ref{eqthm:sbllikederivlrge3}).
\end{proof}

\section{Generalised Evolution Equations}
In this last section, we look at the following generalised evolution,
\be\label{eq:mixedevolution}
\left\{
\begin{array}{cr}
-D^{(\nu)}_{0+*} u(t,x) = Au(t,x),&(0, \infty)\times \R^d\\
u(0, x)  = \phi(x),&\{0\}\times \R^d,
\end{array}
\right.
\ee
where $D^{(\nu)}_{0+*}$ is the Caputo-type operator
\[D^{(\nu)}_{0+*} u(t) = -\int_0^{t}(f(t-r) - f(t))\nu(t, \mathrm dr) -(f(0)-f(t))\int_t^\infty \nu(t, \mathrm dr).\]

Here $\nu(t, \cdot)$ is a L\'evy transition kernel that satisfies $\sup_t\int \min(1, r)\nu(t, \mathrm dr) < \infty$.
The solution to Equation (\ref{eq:mixedevolution}) is given by
\[u(t,x) = E_{(\nu), t}(A)\phi(x),\]
where $E_{(\nu), t}(A)$ is the \emph{operator-valued generalised Mittag-Leffler} function which is defined by the operator-valued integral
\be\label{eq:generalisedopvalML}
E_{(\nu),t}(A) = \int_0^\infty e^{As} \mathrm d_s\left(\int_{-\infty}^t G_{(\nu)}(s, t, \mathrm dr)\right) = 1 + A \Pi^{-A}_{(\nu)}(t, [0, t]),
\ee
where $\Pi^{-A}_{(\nu)}$ is the operator-valued potential measure of the semigroup $T^{(\nu)}_t e^{tA}$ generated by \mbox{$(-D^{(\nu)}_{0+*} - A)$},
\[\Pi^{-A}_{(\nu)}(t,\mathrm dr) = \int_0^\infty e^{As} \mathrm ds~G_{(\nu)}(s, t, \mathrm dr).\]

Then we can rewrite this solution to get the Green's function,
\begin{align*}
E_{(\nu), t}(A)\phi(x) &= \int_{\R^d}\phi(y)\int_0^\infty G^{A}(s,x,y)\pderiv{}{s}\left(\int_t^\infty G_{(\nu)}(s,t,\mathrm dr)\right)\mathrm ds\mathrm dy\\
& = \int_{\R^d}\phi(y)G^{(\nu)}_A(t, x, y)~\mathrm dy,
\end{align*}
where $G^{(\nu)}_A$ is the Green's function of the evolution Equation (\ref{eq:mixedevolution}) given by
\[G^{(\nu)}_{A}(t, x, y) := \int_0^\infty G^A(s,x,y) \pderiv{}{s}\left(\int_t^\infty  G^{(\nu)}(s,t,\mathrm dr)\right)\mathrm ds.\]

We will use the following comparison principle from \cite{kolokoltsov2019mixed}.
\begin{thm}
Let $\nu$ and $\tilde{\nu}$ be two L\'evy measures satisfying
\[\nu(t, \mathrm dr) \geq \tilde{\nu}(\mathrm dr),\]
\[\sup_t \int_0^\infty \min(1, r)\nu(t,\mathrm dr) < \infty, \quad \int_0^\infty \min(1, r)\tilde{\nu}(\mathrm dr) < \infty,\]
and $\nu(t, (0, \infty)) = \tilde{\nu}((0, \infty)) = \infty$. Then for any non-increasing function $f$ we have the comparison principle for the semigroups:
\[T_t^{\nu} f \geq T^{\tilde{\nu}}_t f,\]
where $T^{(\nu)}_t$ and $T^{\tilde{\nu}}_t$ are the semigroups generated by
\[D^{(\nu)} f(t) =-\int_0^\infty (f(t - r) - f(t))\nu(t,\mathrm dr)\]
and
\[D^{(\tilde{\nu})} f(t) =-\int_0^\infty (f(t - r) - f(t))\tilde{\nu}(\mathrm dr)\]
respectively. Moreover, the potential measures of the semigroups $T_t^\nu$ and $T_t^{\tilde{\nu}}$ satisfy the comparison principle,
\[U^{(\nu)}(t,[0, t]) \leq U^{(\tilde{\nu})}([0, t]).\]
\end{thm}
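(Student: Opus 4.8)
The plan is to pass to the underlying decreasing Markov processes, run a pathwise coupling, and read off both assertions. Writing $s$ for the semigroup parameter, recall from Section~2 that $-D^{(\nu)}$ generates a decreasing pure‑jump Markov process $Y^{(\nu)}$ whose jump kernel at a point $x$ is $\nu(x,\cdot)$, so $T^{(\nu)}_s g(x)=\mathbf{E}_x[g(Y^{(\nu)}_s)]$; since $\tilde\nu$ is spatially homogeneous, $Y^{(\tilde\nu)}$ is (the negative of) a subordinator with Lévy measure $\tilde\nu$. The hypothesis $\nu(t,\mathrm dr)\ge\tilde\nu(\mathrm dr)$ allows the splitting $\nu(x,\mathrm dr)=\tilde\nu(\mathrm dr)+\rho(x,\mathrm dr)$ with $\rho(x,\cdot)\ge 0$ again a Lévy measure, since $\int\min(1,r)\,\rho(x,\mathrm dr)\le\int\min(1,r)\,\nu(x,\mathrm dr)<\infty$ uniformly in $x$.

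First I would realise $Y^{(\nu)}$ and $Y^{(\tilde\nu)}$ on one probability space, started from the same point: a Poisson random measure on $(0,\infty)\times(0,\infty)$ of intensity $\mathrm ds\,\tilde\nu(\mathrm dr)$ supplies ``common'' downward jumps applied simultaneously to both paths, while $Y^{(\nu)}$ additionally jumps downward at the state‑dependent rate $\rho(Y^{(\nu)}_{s-},\cdot)$. Every jump is strictly downward and the common jumps move both paths by the same amount, so the gap $Y^{(\tilde\nu)}_s-Y^{(\nu)}_s$ starts at $0$ and is non‑decreasing in $s$; hence $Y^{(\nu)}_s\le Y^{(\tilde\nu)}_s$ for all $s$, almost surely. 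If $f$ is non‑increasing this gives $f(Y^{(\nu)}_s)\ge f(Y^{(\tilde\nu)}_s)$ pathwise, and taking expectations yields $T^{(\nu)}_s f\ge T^{(\tilde\nu)}_s f$; equivalently, $T^{(\nu)}_s h\le T^{(\tilde\nu)}_s h$ for every non‑decreasing $h$.

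For the potential measures I would start from $U^{(\nu)}(t,[0,t])=\int_0^\infty G_{(\nu)}(s,t,[0,t])\,\mathrm ds=\mathbf{E}_t\!\left[\int_0^\infty\mathbf 1_{[0,t]}(Y^{(\nu)}_s)\,\mathrm ds\right]$ by Tonelli's theorem. Since $Y^{(\nu)}$ is decreasing and starts at $t$, it never exceeds $t$, so $\mathbf 1_{[0,t]}(Y^{(\nu)}_s)=\mathbf 1_{\{Y^{(\nu)}_s\ge 0\}}$; the coupling then gives $\{Y^{(\nu)}_s\ge 0\}\subseteq\{Y^{(\tilde\nu)}_s\ge 0\}$, so the occupation times satisfy $\int_0^\infty\mathbf 1_{\{Y^{(\nu)}_s\ge 0\}}\,\mathrm ds\le\int_0^\infty\mathbf 1_{\{Y^{(\tilde\nu)}_s\ge 0\}}\,\mathrm ds$ pathwise, and after taking expectations $U^{(\nu)}(t,[0,t])\le U^{(\tilde\nu)}([0,t])$. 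The right‑hand side equals $I^{(\tilde\nu)}_0\mathbf 1(t)$, which is finite (it equals $t^\beta/\Gamma(1+\beta)$ in the stable case), so the inequality is not vacuous.

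The main obstacle is making the coupling rigorous for a variable‑coefficient, possibly infinite‑activity kernel: one needs the martingale problem for $-D^{(\nu)}$ to be well posed and that superposing the common Poisson random measure with the state‑dependent extra jumps genuinely produces a process with generator $-D^{(\nu)}$ — this is exactly where the standing assumptions $\sup_t\int\min(1,r)\,\nu(t,\mathrm dr)<\infty$, $\nu(t,(0,\infty))=\infty$, and the existence/uniqueness theory recalled in Section~2 are used; the remaining monotonicity steps are elementary. An essentially equivalent analytic route sidesteps the coupling: setting $w=T^{(\nu)}_s f-T^{(\tilde\nu)}_s f$ one checks that $\partial_s w=-D^{(\nu)}w+g$ with $w|_{s=0}=0$ and forcing $g=(D^{(\tilde\nu)}-D^{(\nu)})T^{(\tilde\nu)}_s f=\int\big(T^{(\tilde\nu)}_s f(\cdot-r)-T^{(\tilde\nu)}_s f(\cdot)\big)\rho(\cdot,\mathrm dr)\ge 0$, nonnegativity because $T^{(\tilde\nu)}_s f$ inherits the monotonicity of $f$; Duhamel's formula and positivity‑preservation of the semigroup generated by $-D^{(\nu)}$ then force $w\ge 0$.
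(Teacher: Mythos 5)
The paper does not actually prove this theorem: it is imported verbatim from the cited review \cite{kolokoltsov2019mixed} ("We will use the following comparison principle from..."), so there is no in-paper argument to measure you against. Judged on its own, your coupling proof is correct and is the natural route given the probabilistic reading of $-D^{(\nu)}$ as the generator of a decreasing jump process that the paper sets up in Section~2. The pathwise domination $Y^{(\nu)}_s\le Y^{(\tilde\nu)}_s$ (common jumps from a Poisson random measure of intensity $\mathrm ds\,\tilde\nu(\mathrm dr)$, plus extra downward jumps for $Y^{(\nu)}$ at rate $\rho(Y^{(\nu)}_{s-},\cdot)$) delivers both assertions with the correct inequality directions: monotone $f$ gives the semigroup comparison, and the occupation-time identity $U^{(\nu)}(t,[0,t])=\mathbf E_t\int_0^\infty\mathbf 1_{\{Y^{(\nu)}_s\ge 0\}}\,\mathrm ds$ gives the potential-measure comparison. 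Two small points of rigour deserve a sentence each in a final write-up. First, the splitting $\nu(x,\cdot)=\tilde\nu+\rho(x,\cdot)$ cannot be defined by naive subtraction, since both measures have infinite total mass near the origin; one should instead set $\rho(x,\mathrm dr)=(1-h_x(r))\,\nu(x,\mathrm dr)$ with $h_x=\mathrm d\tilde\nu/\mathrm d\nu(x,\cdot)\le 1$, which exists because $\tilde\nu\le\nu(x,\cdot)$ and $\nu(x,\cdot)$ is $\sigma$-finite (finite on each $(\varepsilon,\infty)$ by the one-sided L\'evy condition). Second, you correctly flag that realising the superposition as a process with generator $-D^{(\nu)}$ is exactly where the well-posedness theory for state-dependent, infinite-activity kernels is consumed. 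Your alternative Duhamel argument is also sound, and it contains the one genuinely non-obvious organisational choice: the perturbation must be expanded around the \emph{homogeneous} semigroup $T^{(\tilde\nu)}_s$, because only there is preservation of monotonicity automatic (it is a convolution with a measure supported on downward shifts); the variable-coefficient semigroup $T^{(\nu)}_s$ need not preserve monotonicity, so the roles of the two semigroups in the forcing term cannot be interchanged.
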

A direct application of this comparison principle gives us the main result of this article.

\begin{thm}
Let $A$ be one of the spatial operators from (\ref{eq:fracdiffusion}), Theorem \ref{thm:stableglobal}, (\ref{eq:Lgeneraloperator}) or (\ref{eq:fracevolvar}) along with their relevant assumptions. Let $\nu(t,\mathrm ds)$ be a L\'evy transition kernel which has upper and lower bounds of $\beta$-fractional~type,
\[(-1/\Gamma(-\beta_1)) C_\nu s^{-1-\beta_1}\mathrm ds\leq\nu(t, \mathrm ds) \leq (-1/\Gamma(-\beta_2)) C_\nu s^{-1-\beta_2}\mathrm ds,\]
for some $\beta_1, \beta_2\in(0, 1)$ and $C_\nu > 0$. Then
\[c_2 E_{\beta_2}(At^{\beta_2})\phi(x) \leq E_{(\nu), t}(A)\phi(x) \leq c_1 E_{\beta_1}(At^{\beta_1})\phi(x),\]
for a non-increasing function $\phi$, where
\[E_{(\nu), t}(A) \phi(x) = \int_{\R^d}\phi(y) G^{(\nu)}_A(t,x,y)~\mathrm dy,\]
and
\[E_{\beta}(At^\beta) \phi(x) = \int_{\R^d}\phi(y) G^{(\beta)}_A(t,x,y)~\mathrm dy.\]
\end{thm}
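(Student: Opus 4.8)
The plan is to read the two-sided bound on $\nu$ as the hypothesis of the comparison principle, applied twice, and to transfer that comparison to the operator-valued Mittag--Leffler function through its representation as an average of the spatial semigroup $e^{As}$ against a time-change kernel. Write $\nu_\beta(\mathrm ds):=-s^{-1-\beta}/\Gamma(-\beta)\,\mathrm ds$ for the purely fractional L\'evy kernel (the kernel of $D^\beta_0$), so that the hypothesis reads $C_\nu\,\nu_{\beta_1}(\mathrm ds)\le\nu(t,\mathrm ds)\le C_\nu\,\nu_{\beta_2}(\mathrm ds)$. Both extreme kernels have infinite total mass and finite $\int_0^\infty\min(1,s)\,\nu_{\beta_i}(\mathrm ds)$, while $\nu$ inherits infinite total mass from the lower bound and the L\'evy integrability from the upper bound, so the comparison principle is applicable once with $\widetilde\nu=C_\nu\nu_{\beta_1}$ and once with $\widetilde\nu=C_\nu\nu_{\beta_2}$. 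For $\nu=C_\nu\nu_{\beta_i}$ the equation $-D^{(\nu)}_{0+*}u=Au$ is, up to the time-rescaling $t\mapsto C_\nu^{-1/\beta_i}t$ (which I would absorb into the constants $c_i$), the classical time-fractional problem whose solution operator is $E_{\beta_i}(At^{\beta_i})$ and whose Green's function $G^{(\beta_i)}_A$ is exactly the object estimated in Theorems~\ref{thm:globaldivergenceestimate}, \ref{thm:stableglobal}, \ref{thm:fracdiffusionlocal} and \ref{thm:fracevollocal}, depending on which of the four admissible spatial operators $A$ is in force; so the quantitative content of the statement is free once the comparison is established.

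First I would pass to the representation
\[
E_{(\nu),t}(A)\phi(x)=\int_{\R^d}\phi(y)\,G^{(\nu)}_A(t,x,y)\,\mathrm dy=\int_0^\infty (e^{As}\phi)(x)\,m^{\nu}_t(\mathrm ds),
\]
where $m^{\nu}_t(\mathrm ds)=\mathrm d_s\big(\int_{-\infty}^t G_{(\nu)}(s,t,\mathrm dr)\big)$ is the time-change kernel built into the definition of $G^{(\nu)}_A$ --- the law of the inverse (first passage over the level $t$) process of the subordinator associated with $D^{(\nu)}_{0+*}$. Then I would invoke the comparison principle to order these kernels in $\nu$: applying $T^{\nu}_s f\ge T^{C_\nu\nu_{\beta_1}}_s f$ and $T^{\nu}_s f\le T^{C_\nu\nu_{\beta_2}}_s f$ to the non-increasing test function $f=\mathbf 1_{(-\infty,t]}$ orders the distribution functions $s\mapsto\int_{-\infty}^t G_{(\cdot)}(s,t,\mathrm dr)$, which places $m^{\nu}_t$ between $m^{C_\nu\nu_{\beta_2}}_t$ and $m^{C_\nu\nu_{\beta_1}}_t$ in the stochastic order; equivalently this can be read from the potential-measure inequality $U^{(\nu)}(t,[0,t])\le U^{(\widetilde\nu)}([0,t])$ combined with $E_{(\nu),t}(A)=1+A\,\Pi^{-A}_{(\nu)}(t,[0,t])$.

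The final step integrates this ordering against $e^{As}\phi$. The hypothesis that $\phi$ is non-increasing enters precisely here: it forces $s\mapsto (e^{As}\phi)(x)=\int_{\R^d}G^A(s,x,y)\phi(y)\,\mathrm dy$ to be monotone in $s$ for every fixed $x$ (the relevant monotonicity class of data is invariant under the positivity-preserving semigroup $e^{As}$), so the stochastic comparison $m^{C_\nu\nu_{\beta_2}}_t\preceq m^{\nu}_t\preceq m^{C_\nu\nu_{\beta_1}}_t$ passes to the integrals; since, up to the $C_\nu$-rescaling absorbed into $c_1,c_2$, one has $c_i\,E_{\beta_i}(At^{\beta_i})\phi(x)=\int_0^\infty(e^{As}\phi)(x)\,m^{C_\nu\nu_{\beta_i}}_t(\mathrm ds)$, this gives
\begin{align*}
c_2\,E_{\beta_2}(At^{\beta_2})\phi(x)
&=\int_0^\infty (e^{As}\phi)(x)\,m^{C_\nu\nu_{\beta_2}}_t(\mathrm ds)\\
&\le\int_0^\infty (e^{As}\phi)(x)\,m^{\nu}_t(\mathrm ds)=E_{(\nu),t}(A)\phi(x)\\
&\le\int_0^\infty (e^{As}\phi)(x)\,m^{C_\nu\nu_{\beta_1}}_t(\mathrm ds)=c_1\,E_{\beta_1}(At^{\beta_1})\phi(x),
\end{align*}
which is the assertion once the extreme terms are rewritten through $G^{(\beta_i)}_A$.

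I expect the main obstacle to be exactly this last monotonicity bookkeeping: one must check that, for non-increasing $\phi$, the direction of monotonicity of $s\mapsto(e^{As}\phi)(x)$ combines with the stochastic ordering of the time-change kernels to produce the sandwich in the orientation stated, and that this works uniformly across all four spatial operators --- in particular for the stable-like operators with variable coefficients, where $e^{As}$ is neither self-adjoint nor a convolution. By contrast, justifying the representation $E_{(\nu),t}(A)\phi=\int_0^\infty(e^{As}\phi)\,m^\nu_t(\mathrm ds)$ for unbounded $A$ via the Pollard--Zolotarev formula (\ref{eq:mittagleffLT}), exactly as in Section~2, and the bookkeeping for the scaling constant $C_\nu$, are routine.
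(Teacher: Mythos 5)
Your argument is essentially the paper's own proof: the paper disposes of this theorem in a single sentence, citing the representation (\ref{eq:generalisedopvalML}) together with the comparison principle for potential operators, which is exactly the pair of ingredients you combine (applied once with each of the two bounding kernels $C_\nu\,\nu_{\beta_i}$, with the scaling constant absorbed into $c_1,c_2$). The monotonicity of $s\mapsto(e^{As}\phi)(x)$ that you flag as the main remaining obstacle is indeed the one point that neither your sketch nor the paper actually verifies; otherwise your write-up is a faithful, and considerably more detailed, unwinding of the argument the paper intends.
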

\begin{proof}
This follows from the formula (\ref{eq:generalisedopvalML}) and an application of the comparison principle for potential operators.
\end{proof}
Thus the estimates obtained in Theorem \ref{thm:globaldivergenceestimate}, Theorem \ref{thm:stableglobal}, Theorem \ref{thm:fracdiffusionlocal} and Theorem \ref{thm:fracevollocal} can be used to estimate solutions of generalised evolutions (\ref{eq:mixedevolution}).
\begin{rem}
In order to see why this result is expected, let us give some intuition behind the comparison principle. The assumption that the L\'evy kernel is bounded below by the L\'evy kernel of a $\beta$-stable subordinator, means that the L\'evy subordinator generated by the operator
\[D^{(\nu)}_+ f(t) = -\int_0^\infty(f(x-y)-f(x))\nu(\mathrm dy),\]
has on average jumps that are larger than those of the process generated by
\[D^{\beta}_+ f(t) = \int_0^\infty (f(x-y)-f(x))[y^{1+\beta}\Gamma(-\beta)]^{-1}~\mathrm dy.\]

So on the sample paths level, the jumps of $X^{(\nu)}$ will typically be larger than those of $X^{\beta}$, which means that the inverse process of $X^{(\nu)}$ will typically be constant for longer times than the inverse process of $X^{\beta}$. Thus when we subordinate the spatial process, $Y(t)$, generated by the operator $A$ by the inverse subordinator given by
\[S_t^\nu := \inf\{s \geq 0: X_{s}^\nu \geq t\},\]
and compare its paths to the spatial process subordinated by an inverse stable subordinator $S^\beta_t$, we will see that $Y(S_t^\nu)$ is dominated by $Y(S_t^\beta)$ in the sense that $Y(S_t^\nu)$ will have longer trapping times.
\end{rem}

\section{Conclusion}
In this article, we have looked at two-sided estimates for the Green's function of fractional evolution equations of the form
\[D^{\beta}u(t, x) = Lu(t, x), \quad u(0, x) = Y(x).\]

The solution of such fractional evolution equations can be written with the help of operator-valued Mittag-Leffler functions,
\begin{align*}
u(t, x) = E_\beta (t^\beta L)Y(x) &= \int_0^\infty e^{zt^\beta L}Y(x) z^{-1-\frac 1\beta} w_\beta(z^{-\frac 1\beta})~\mathrm dz\\
&= \int_{\R^d} Y(y) \int_0^\infty G_L(t^\beta z, x, y)z^{-1-\frac 1\beta} w_\beta (z^{-\frac 1\beta})~\mathrm dz\\
& = \int_{\R^d} Y(y) G^{(\beta)}_L(t, x, y)~\mathrm dz.
\end{align*}

We have given two-sided estimates for the Green's function $G^{(\beta)}_L(t, x, y)$ (and its spatial derivatives) in several different situations. The situations can be split up into two broad cases: when the Green's function $G_L$ associated with $L$ does or does not have known global in time estimates. In those two cases, we consider generators of diffusion processes in Theorems \ref{thm:globaldivergenceestimate} and \ref{thm:fracdiffusionlocal}; and we consider generators of stable and stable-like processes in Theorems \ref{thm:stableglobal} and \ref{thm:fracevollocal}. Finally, we looked at generalised evolution equations where the operator acting on the time variable is given by a Caputo-type operator
\[D^{(\nu, t)}_0u(t) = \int_0^t (u(t - s) - u(t))\nu(t, \mathrm ds) + \int_t^\infty (u(0) - u(t))\nu(t, \mathrm ds).\]

We concluded that solutions to generalised evolution equations of the form
\be\label{eqconc:general}D^{(\nu, t)}_0 u(t, x) = Lu(t, x), \quad u(0, x) = Y(x),
\ee
where $\nu(t, \mathrm ds)$ is a L\'evy-type kernel which for fixed $t$ is comparable to the L\'evy measure of a $\beta$-stable subordinator, could be estimated using the estimates obtained for $G^{(\beta)}_L$. Then whenever one is looking at evolution equations of the form (\ref{eqconc:general}), or, from the probabilistic point of view, at stochastic processes generated by $-D^{(\nu)} - L$, then under the assumption that $\nu$ is comparable to $\beta$-stable, the estimates shown in this article can be used to gain a lot of information. 

Note that in this article we have viewed $G^{(\beta)}_L(t,x,y)$ as the Green's function of the evolution~equation
\[D^\beta u(t,x) = Lu(t,x).\]

Probabilistically speaking, $G_L^{(\beta)}$ are the transition densities of the process $X_t^{L, \beta}$ generated by $-D^\beta - L$. The process $X_t^{L, \beta}$ is the subordination of the process generated by $L$ by the inverse of the process generated by $D^\beta$. In this view one could use the estimates in this article to obtain sample path properties of a subordinated process $X^{L, \beta}_t$.

\section{Asymptotic Methods}\label{sec:asymptoticmethods}
 The main idea of the Laplace method for estimating integrals of the form 
\[\int g(x)\exp\{-\lambda h(x)\}~\mathrm dx,\]
is that the major contribution to the asymptotic behaviour comes from a neighbourhood around the point at which the function $h(x)$ in the exponent attains its minimum value. Outside this neighbourhood the contribution is exponentially small, and so when one proves asymptotic formulas using Laplace methods, the integrals are split up into the neighbourhood around which the major contribution occurs (or around each such neighbourhood, if $-h(x)$ is not unimodal) and the regions for which the approximation error is exponentially small. Although we focus on integrals over some interval $(a, \infty)$, the point is that extending the interval only introduces exponentially small errors and so the value of the integral over a larger interval essentially the same. Our standard references for asymptotic methods are \cite{de1970asymptotic}, \cite{murray2012asymptotic} or \cite{fedorjuk1987asymptotics}.
Consider the integral
\[
\int_b^\infty \exp\{-\lambda h(x)\}~\mathrm dx, \quad b > 0.
\]

Let us assume that $h$ is a real continuous function, and that it attains its minimum at the boundary point $b$, that $h'(b)$ exists and $h'(b) > 0$. Moreover assume that $h(x) > h(b)$ (for $x > b$) and $h(x) \rightarrow \infty$ as $x\rightarrow \infty$. We will not recount the proof, but we state the asymptotic formula, 
\be\label{eq:laplaceboundary}\int_b^\infty g(x)\exp\{-\lambda h(x)\}~\mathrm dx \sim g(b)(\lambda h'(b))^{-1}\exp\{-\lambda h(b)\}, \quad \lambda\rightarrow \infty.\ee

On the other hand, if the function $h$ has a minimum on the interior of the interval $(b, \infty)$, say at the point $\tilde{b}\in(b, \infty)$. Finally, assume that the derivative $h'(x)$ exists in some neighbourhood of $x = \tilde{b}$, that $h''(\tilde{b})$ exists and that $h''(\tilde{b}) > 0$. Then
\be\label{eq:laplaceinterior}
\int_b^\infty g(x)\exp\{-\lambda h(x)\}~\mathrm dx\sim g(\tilde{b})\sqrt{\frac{2\pi}{\lambda h''(\tilde{b})}}\exp\{-\lambda h(\tilde{b})\}, \quad \lambda\rightarrow \infty.
\ee

\begin{prop}\label{prop:asymptoticcomputation}
Let $a> 0$, $N\in \R$, $c > 0$ and $\Omega \geq 1$. Then the following asymptotic formula holds as $\Omega \rightarrow \infty$,
\[\int_0^1 w^N \exp\{-\Omega w - c w^{-a}\}~\mathrm dw\sim C_1(a, N, c) \Omega^{-\frac{2(N+1) + a}{2(a+1)}}\exp\left\{-C_2(c, a)\Omega^{\frac{a}{a+1}}\right\},\]
where $C_1(a, N, c) = (ac)^{\frac{2(N+1) - 1}{2(a+1)}}\sqrt{\frac{2\pi}{a+1}}$, and $C_2(c, a) = (ac)^{\frac 1{a+1}}[1+ a^{-1}]$.
\end{prop}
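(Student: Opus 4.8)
\section{Proof of Proposition \ref{prop:asymptoticcomputation}}

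The plan is to apply the Laplace method for an interior minimum, but only after an $\Omega$-dependent rescaling that pins the location of the minimum to a fixed point. Write the integrand as $w^N\exp\{-h_\Omega(w)\}$ with phase $h_\Omega(w) = \Omega w + c\,w^{-a}$. On $(0, \infty)$ one has $h_\Omega'(w) = \Omega - ac\,w^{-a-1}$, which vanishes at the single point $w_* = (ac/\Omega)^{1/(a+1)}$; since $w_*\to 0$ as $\Omega\to\infty$, for all large $\Omega$ the minimizer $w_*$ lies in the interior of $(0,1)$, and $h_\Omega$ is convex there with $h_\Omega''(w_*) > 0$. A direct computation using $h_\Omega'(w_*)=0$ (equivalently $c\,w_*^{-a} = \tfrac1a\,\Omega w_*$) gives $h_\Omega(w_*) = \tfrac{a+1}{a}\,\Omega w_* = (ac)^{1/(a+1)}\tfrac{a+1}{a}\,\Omega^{a/(a+1)} = C_2(c,a)\,\Omega^{a/(a+1)}$.

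Next I would substitute $w = w_* u$, which turns the integral into
\[
\int_0^1 w^N\exp\{-\Omega w - c\,w^{-a}\}\,\mathrm dw = w_*^{\,N+1}\int_0^{1/w_*} u^N\exp\{-\lambda\,\phi(u)\}\,\mathrm du,
\]
where $\lambda := \Omega w_* = (ac)^{1/(a+1)}\Omega^{a/(a+1)}\to\infty$ and $\phi(u) := u + a^{-1}u^{-a}$. The function $\phi$ is smooth on $(0,\infty)$, tends to $+\infty$ at both endpoints, and has a unique minimum at $u=1$ with $\phi(1) = \tfrac{a+1}{a}$ and $\phi''(1) = a+1 > 0$; moreover $u^N$ is continuous and nonzero at $u=1$. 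Since $1/w_*\to\infty$, for large $\Omega$ the point $u=1$ is interior to the interval of integration, so the Laplace asymptotics for an interior minimum (the two-sided analogue of (\ref{eq:laplaceinterior}), the contributions from both endpoints of $(0,1/w_*)$ being exponentially smaller than the main term) yields
\[
\int_0^{1/w_*} u^N\exp\{-\lambda\phi(u)\}\,\mathrm du \sim \sqrt{\frac{2\pi}{\lambda\,\phi''(1)}}\,\exp\{-\lambda\phi(1)\} = \sqrt{\frac{2\pi}{(a+1)\lambda}}\,\exp\left\{-\tfrac{a+1}{a}\lambda\right\}, \qquad \Omega\to\infty.
\]
Finally I would substitute back $w_* = (ac)^{1/(a+1)}\Omega^{-1/(a+1)}$ and $\lambda = (ac)^{1/(a+1)}\Omega^{a/(a+1)}$ and collect powers: $w_*^{N+1}$ and $\lambda^{-1/2}$ combine to $(ac)^{\frac{2(N+1)-1}{2(a+1)}}\,\Omega^{-\frac{2(N+1)+a}{2(a+1)}}$, the factor $\sqrt{2\pi/(a+1)}$ is left over, and $\exp\{-\tfrac{a+1}{a}\lambda\} = \exp\{-C_2(c,a)\,\Omega^{a/(a+1)}\}$, which is exactly the claimed formula with $C_1(a,N,c) = (ac)^{\frac{2(N+1)-1}{2(a+1)}}\sqrt{2\pi/(a+1)}$.

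The point that needs care is the legitimacy of the Laplace step after rescaling, since both the minimizer and the large parameter depend on $\Omega$ and the upper endpoint $1/w_*$ of the rescaled integral drifts to infinity. Here one checks that the error from replacing $\int_0^{1/w_*}$ by $\int_0^{\infty}$ (and then by a fixed neighbourhood of $u=1$) is controlled by $\exp\{-\lambda\phi(1/w_*)\}$ times a polynomial in $\Omega$, and since $\lambda\phi(1/w_*)\asymp \lambda/w_* = \Omega$ while the main term decays only like $\exp\{-C_2(c,a)\,\Omega^{a/(a+1)}\}$ with $a/(a+1)<1$, this error is negligible; the same reasoning handles the neighbourhood of $u=0$, where $\phi$ also blows up. The remaining work is the bookkeeping of the exponents displayed above, which is routine.
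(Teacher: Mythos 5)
Your proof is correct and follows essentially the same route as the paper's: rescale by the critical point $w_*=(ac/\Omega)^{1/(a+1)}$, reduce to the phase $\phi(u)=u+a^{-1}u^{-a}$ with large parameter $\lambda=(ac\,\Omega^{a})^{1/(a+1)}$, and apply the interior-minimum Laplace formula (\ref{eq:laplaceinterior}) at $u=1$. The only difference is that you explicitly justify the $\Omega$-dependence of the endpoints (showing the endpoint contributions are $O(e^{-c\Omega})$, hence negligible against $e^{-C_2\Omega^{a/(a+1)}}$), a point the paper's proof leaves implicit.
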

\begin{proof}
Define
\[J(\Omega) := \int_0^1 w^{N} \exp\{-w\Omega - cw^{-a}\}~\mathrm dw,\]
and let $h(w) = -w\Omega - cw^{-a}$. Differentiating $h$ with respect to $w$, one finds the maximum of $h$ at
\[w = w_* := \left(\frac{\Omega}{ac}\right)^{-\frac 1{a+1}}.\]

Now the trick is to make the substitution $w = w_* s$ in the integral $J(\Omega)$, to obtain
\begin{align*}
J(\Omega) &= w_*^{N+1}\int_0^{w_*^{-1}} s^N \exp\{-w_* s\Omega - c(w_*s)^{-a}\}~\mathrm ds\\
& = w_*^{N+1}\int_0^{w_*^{-1}} s^N \exp\{-(\Omega^a ac)^{\frac 1{a+1}}[s + a^{-1}s^{-a}]\}~\mathrm ds.
\end{align*}

Now we are in a position to apply the asymptotic formula (\ref{eq:laplaceinterior}), with $g(s) = s^N$, $h(s) =  s + a^{-1}s^{-a}$ and $\lambda = (\Omega^a ac)^{\frac 1{a+1}}$. For this we need some derivatives of $h$,
\[h'(s) = 1- s^{-a-1},\]
\[h''(s) = (a + 1)s^{-a-2},\]
thus $h$ has a minimum at $s=1 \in (0, w_*^{-1})$. Finally applying (\ref{eq:laplaceinterior}) we have
\begin{align*}
J(\Omega) &\sim \left(\frac{\Omega}{ac}\right)^{-\frac{N+1}{a+1}}\sqrt{\frac{2\pi}{(\Omega^a ac)^{\frac 1{a+1}}(a+1)}}\exp\{-(\Omega^a ac)^{\frac 1{a+1}}[1 + a^{-1}]\}\\
&= C_1(a, N, c) \Omega^{-\frac{2(N+1) + a}{2(a+1)}}\exp\left\{-C_2(c, a)\Omega^{\frac{a}{a+1}}\right\},
\end{align*}
as required.
\end{proof}
Note that when applying the above asymptotic formula, we will not care so much what the constants are, only what they depend on. 

\bibliographystyle{alpha}
\bibliography{}
\end{document}